\theoremstyle{definition} \newtheorem{Definition}{Definition}[section]
\theoremstyle{plain} \newtheorem{Theorem}[Definition]{Theorem}
\theoremstyle{plain} \newtheorem{corollary}[Definition]{Corollary}
\theoremstyle{plain} \newtheorem{lemma}[Definition]{Lemma}
\theoremstyle{definition} \newtheorem{Remark}[Definition]{Remark}
\theoremstyle{plain}
\newtheorem{proposition}[Definition]{Propostion}
\theoremstyle{plain} \newtheorem{claim}[Definition]{Claim}
\theoremstyle{plain} 
\theoremstyle{definition}
\newtheorem{example}[Definition]{Example}
\theoremstyle{definition}
\newtheorem{construction}[Definition]{Construction}
\theoremstyle{plain}
\newcommand{\gen}[1]{\left\langle #1 \right\rangle}
\newcommand{\R}{\mathcal{R}}
\newcommand{\ew}{\varepsilon}
\newcommand{\im}{\mbox{ im}}
\newcommand{\id}{\mathop{\mathrm{id}}}
\newcommand{\aut}[1]{\mathop{\mathrm{Aut}}({#1})}
\newcommand{\out}[1]{\mathop{\mathrm{Out}}({#1})}
\newcommand{\wn}{W_{n}}
\newcommand{\wns}{\wn^{\ast}}
\newcommand{\wnl}[1]{\wn^{#1}}
\newcommand{\rwnl}[1]{\mathsf{W}_{n}^{#1}}
\newcommand{\T}[1]{\mathcal{#1}}
\newcommand{\CCn}{\mathfrak{C}_{n}}
\newcommand{\Tnr}{T_{n,r}}
\newcommand{\Gnr}{G_{n,r}}
\newcommand{\Bnr}{\mathcal{B}_{n,r}}
\newcommand{\TBnr}{\T{T}\Bnr}
\newcommand{\On}{\T{O}_{n}}
\newcommand{\SOn}{\T{SO}_{n}}
\newcommand{\TSOn}{\T{TSO}_{n}}
\newcommand{\TSLn}{\T{TSL}_{n}}
\newcommand{\Onr}{\T{O}_{n,r}}
\newcommand{\Ons}[1]{\T{O}_{n,#1}}
\newcommand{\Oms}[2]{\T{O}_{#1,#2}}
\newcommand{\XOn}{\T{X}_{n}}
\newcommand{\XOno}[1]{\T{X}_{\Omega{(#1)}}}
\newcommand{\TOn}[1]{\T{TO}_{#1}}
\newcommand{\TLn}[1]{\T{TL}_{#1}}
\newcommand{\SLn}[1]{\T{SL}_{#1}}
\newcommand{\TOnr}{\T{T}\Onr}
\newcommand{\TOns}[1]{\T{T}\T{O}_{n,#1}}
\newcommand{\TOms}[2]{\T{T}\T{O}_{#1,#2}}
\newcommand{\xt}{X_{2}}
\newcommand{\xtp}{\xt^{+}}
\newcommand{\xts}{\xt^{\ast}}
\newcommand{\Xn}{X_{n}}
\newcommand{\xn}{\Xn}
\newcommand{\Xns}{\Xn^{*}}
\newcommand{\xns}{\Xns}
\newcommand{\xnp}{\Xn^{+}}
\newcommand{\xno}{X_n^{\omega}}
\newcommand{\Gn}[1]{\mathfrak{G}(#1)}
\newcommand{\Ln}[1]{\mathcal{L}_{#1}}
\newcommand{\pn}[1]{\mathcal{P}_{#1}}
\newcommand{\spn}[1]{\widetilde{\T{P}}_{#1}}
\newcommand{\core}{\mathrm{Core}}
\newcommand{\Un}[1]{\mathfrak{U_{#1}}}
\newcommand{\ul}[1]{\underline{#1}}
\newcommand{\lcp}[1]{\mbox{lcp}({#1})}
\newcommand{\leqlex}{\le_{\mbox{lex}}}
\newcommand{\lelex}{<_{\mbox{lex}}}
\newcommand{\leslex}{<_{\mbox{slex}}}
\newcommand{\pd}[1]{\Omega(#1)}
\newcommand{\Z}{\mathbb{Z}}
\newcommand{\N}{\mathbb{N}}
\newcommand{\tran}{\mathop{\mathrm{Tr}}}
\newcommand{\simeqI}{\simeq_{{\bf{I}}}}
\newcommand{\rot}{\mathmbox{rot}}
\newcommand{\simrot}{\sim_{{\rot}}}
\newcommand{\rotclass}[1]{[#1]_{\simrot}}
\newcommand{\rsig}{\mathrm{\overline{sig}}}
\newcommand{\shift}[1]{\sigma_{#1}}
\renewcommand*{\eqref}[1]{%
  \hyperref[{#1}]{\textup{\tagform@{\ref*{#1}}}}%
}
\begin{document}
\author{
  Olukoya Feyishayo,\\
  Institute Of Mathematics, \\
  University of Aberdeen,\\
  King's College, \\
  Fraser Noble Building,\\
  Aberdeen AB24 3UE\\
  Scotland\\
 \texttt{feyisayo.olukoya@abdn.ac.uk}
}
\title{An automata theoretic proof that \texorpdfstring{$\out{T}\cong \Z/ 2\Z$ }{Lg}  and some embedding results for \texorpdfstring{$\out{V}$}{Lg}}
\maketitle
\begin{abstract}
In a seminal paper, Brin demonstrates that the outerautomorphism group of Thompson group $T$ is  isomorphic to the cyclic group of order two. In this article, building on characterisation of automorphisms of the Higman-Thompson groups $\Gnr$ and $\Tnr$ as  groups of transducers, we give a new proof, automata theoretic in nature, of Brin's result.

We also demonstrate that the group of outerautomorphisms of Thompson's group $V = G_{2,1}$ contains an isomorphic copy of Thompson's group $F$. This extends a result  of the author  demonstrating that whenever $n \ge 3$ and $1 \le r < n$ the outerautomorphism groups of $G_{n,r}$ and $T_{n,r}$ contain an isomorphic copy of $F$.

\end{abstract}

\section{Introduction}

In the seminal paper  \cite{MBrin2}, Brin demonstrates that the outerautomorphism group of Thompson group $T$ is  isomorphic to the cyclic group of order two. In this article, building on characterisation of automorphisms of the Higman-Thompson groups $\Gnr$ and $\Tnr$ as  groups of transducers, we give a new proof, automata theoretic in nature, of Brin's result.

The Higman-Thompson groups $\Gnr$ and $\Tnr$ are significant groups in geometric and combinatorial group theory. For not only were they the first discovered family of finitely presented simple groups (the derived subgroup of $\Gnr $ and $\Tnr$ is simple when $n$ is even), they also crop up in numerous areas of group theory. In a follow up paper to \cite{MBrin2}, Brin and Guzman (\cite{MBrinFGuzman}), characterise automorphisms of the groups $T_{n, n-1}$. Their techniques however does not extend to the groups $G_{n,r}$ (for any $r$) or $\Tnr$ (when $r$ is no equal to $n-1$). This remained a challenge for the community, until the breakthrough paper \cite{BlkYMaisANav} characterising the automorphism group of $\Gnr$ as a group $\Bnr$ of transducers and the follow up paper \cite{OlukoyaAutTnr} extending this result to show that $\aut{\Tnr}$ is a subgroup $\TBnr$ of $\Bnr$.

An unexpected consequence, developed further in forthcoming articles \cite{BleakCameronOlukoya1,BleakCameronOlukoya2,BelkBleakCameronOlukoya3}, of the characterisations of the outerautomorphisms groups $\Onr$ of $\Gnr$ given in the paper \cite{BlkYMaisANav}, is a connection to the group $\aut{\xn^{\Z}, \shift{n}}$ of automorphisms of the shift dynamical system. The group $\aut{\xn^{\Z}, \shift{n}}$ is of course an important and well-studied group in symbolic dynamics. It turns out (\cite{BleakCameronOlukoya2}) that the group $\Ons{n-1}$ has a subgroup $\Ln{n}$  which sits in short exact sequence: 
$$1 \to \gen{\shift{n}}  \to \aut{\xn^{\Z}, \shift{n}} \to \Ln{n} \to 1$$ where $\gen{\shift{n}}$ is the centre of $\aut{\xn^{\Z}, \shift{n}}$ by a result in \cite{Ryan2}. The inclusion $ \Ons{1} \le \Onr \le  \Ons{n-1}$ of the outerautomorphism groups (\cite{BlkYMaisANav}), means that for each $r$ there is a corresponding group $\Ln{n,r} = \Ln{n} \cap \Onr$. We write $\On$ for the group $\Ons{n-1}$.

By results in \cite{OlukoyaAutTnr}, the group $\TOn{n,r}$ of outerautomorphisms of $\Tnr$ is a subgroup of $\Onr$ and we also have inclusions $\TOn{n, 1} \le \TOnr \le \Ons{n-1}$. In this context,  Brin's result in \cite{MBrin2} can be stated as follows:  $\TOn{2}$ consists only of elements in the intersection $\TOn{2} \cap \Ln{2}$ and this intersection contains a single non-trivial element. It is therefore natural to study the intersection $\TOn{n} \cap \Ln{n}$ for $n > 2$. By results in \cite{OlukoyaAutTnr} it is known that the set $\TOn{n} \backslash \Ln{n}$ is infinite, when $n >3$, in fact it contains an isomorphic copy of Thompson's group $F$. In this article we prove the following result:

\begin{Theorem}
		Let $n \ge 2$. Then the group  $\TOn{n} \cap \Ln{n}$ is isomorphic to the group $\Z/ 2\Z \times \Z^{\Omega(n-1)} \times \Z/l\Z$, where $n = p_1^{l_1} \ldots p_{\pd{n}}^{l_{\pd{n}}}$ is the prime decomposition of $n$ and $l = \gcd(l_1, \ldots, l_{\pd{n}})$.
\end{Theorem}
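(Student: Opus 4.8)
The plan is to represent every element of $\TOn{n}\cap\Ln{n}$ by its normalised bi-synchronizing transducer, as supplied by the characterisation of $\aut{\Tmr{n}{n-1}}$ in \cite{OlukoyaAutTnr} together with the realisation $\Ln{n}\cong\aut{\Xnz,\shift{n}}/\gen{\shift{n}}$ of \cite{BleakCameronOlukoya2}: such a transducer simultaneously induces a shift automorphism modulo $\gen{\shift{n}}$ (this is membership in $\Ln{n}$) and a homeomorphism of the circle conjugating $\Tmr{n}{n-1}$ to itself (this is membership in $\TOn{n}$). I would build the isomorphism as a single map $(\epsilon,\rho,\sig)$ into the abelian target and argue injectivity and surjectivity of each coordinate separately; abelianness of $\TOn{n}\cap\Ln{n}$ is then a consequence rather than a hypothesis, so I do not assume it, and instead check directly that each of the three invariants is multiplicative on transducers.

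First I would peel off the $\Z/2\Z$ factor. Define $\epsilon\colon \TOn{n}\cap\Ln{n}\to\Z/2\Z$ recording whether the induced circle homeomorphism preserves or reverses the cyclic order; on transducers this is read from the reversal operation $\rev{\cdot}$. The orientation-reversing reflection used by Brin for $n=2$ is realised by the digit-reversing transducer, which lies in $\Ln{n}$ for every $n$ (it is a shift automorphism), so $\epsilon$ is surjective and split. It remains to identify its kernel $K$, the orientation-preserving part, with $\Z^{\pd{n-1}}\times\Z/l\Z$.

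For the torsion of $K$ I would use the rotation-class invariant. Writing $n=m^{l}$ with $l$ maximal (so $l=\gcd(l_1,\dots,l_{\pd{n}})$ as in the statement), the $m$-ary refinement of the $n$-ary circle supplies a ``primitive rotation'' whose $l$-th power is the $n$-ary rotation, an inner element; this gives a homomorphism $\rho\colon K\to\Z/l\Z$ computed from $\rotclass{\cdot}$, which I would show is surjective with the primitive rotation as generator of order exactly $l$, split by that rotation. For the free part I would introduce the signature homomorphism $\sig\colon\ker\rho\to\Z^{\pd{n-1}}$, recording the multiplicative renormalisation data of the transducer along a maximal chain of divisors $1=d_0\mid d_1\mid\dots\mid d_{\pd{n-1}}=n-1$ in which each ratio $d_{i+1}/d_i$ is prime. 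The crux is that the image of $\sig$ is free abelian of rank exactly $\pd{n-1}$: I would exhibit one ``elementary renormalisation'' transducer per step of the chain, giving $\pd{n-1}$ elements of $\ker\rho$, verify through additivity of $\sig$ that their signatures form a $\Z$-basis (so that the multiplicity of a repeated prime factor of $n-1$ genuinely contributes an independent direction), and bound the rank from above by showing every signature decomposes along such a chain.

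The remaining step is injectivity: an element of $\ker\rho$ on which $\sig$ vanishes represents the identity of $\TOn{n}$, which I would obtain from synchronizing rigidity of the normalised transducers — trivial rotation class and trivial signature force the transducer to coincide with the inner action. Combining the splittings of $\epsilon$ and $\rho$ with the isomorphism $\ker\rho\cong\Z^{\pd{n-1}}$ furnished by $\sig$ yields $\TOn{n}\cap\Ln{n}\cong\Z/2\Z\times\Z^{\pd{n-1}}\times\Z/l\Z$, and specialising to $n=2$ recovers $\pd{1}=0$, $l=1$ and hence Brin's $\Z/2\Z$ as a sanity check. I expect the main obstacle to be precisely the determination of the free rank as $\pd{n-1}$ rather than the number of \emph{distinct} primes dividing $n-1$: this is the one place where the multiplicity of repeated prime factors must be shown to produce independent signature directions, so the combinatorics of the renormalisation chain, not a soft counting argument, does the essential work.
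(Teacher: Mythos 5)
The central construction in your proposal targets the wrong invariant. You read the exponent in the statement as $\Omega(n-1)$, the number of prime factors of $n-1$, and accordingly build your ``signature'' homomorphism from a maximal chain of divisors $1=d_0\mid d_1\mid\dots\mid n-1$. But the free rank is $\pd{n}-1$, one less than the number of \emph{distinct} primes dividing $n$ (the remark immediately following the theorem, that the group is $\Z/2\Z$ when $n$ is prime, already rules out your reading: for $n=5$ it would give rank $\Omega(4)=2$). The invariant that actually does the work has nothing to do with $n-1$: for $T\in\Ln{n}$ the image of each state is a union of exactly $s(T)n^{m}$ basic cones at some level, where $s(T)$ is a divisor of a power of $n$ not divisible by $n$, and $T\mapsto s(T)$ is a homomorphism onto the group $\Gn{n}$ obtained from the multiplicative monoid generated by the primes dividing $n$ by declaring $n\sim 1$; this single group is $\Z^{\pd{n}-1}\times\Z/l\Z$, so the torsion $\Z/l\Z$ and the free part arrive together rather than via a separate ``rotation-class'' homomorphism, and the generators are not rotations but the rescaling transducers $T(\sfrac{n}{p},p)$, one for each prime $p\mid n$, whose states have images of the form $\bigcup_{0\le a\le p-1}U_{jp+a}$. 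Your elementary renormalisations along divisors of $n-1$, and your worry about multiplicities of repeated primes of $n-1$, therefore do not connect to the group being computed.

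The second gap is the injectivity step, which you dispatch with ``synchronizing rigidity''. This is the technical heart of the argument: one must show that an orientation-preserving $T\in\TLn{n}$ with trivial invariant, i.e.\ $s(T)=1$, is the single-state identity transducer. The paper does this by showing that if one state of $T$ is a homeomorphism state then the cone-counting constraint propagates this to all states reachable from it, that the $0$-loop state is such a state (its output on reading $0$ is $0$ by orientation-preservation), hence $T$ is synchronous and invertible, and finally that an orientation-preserving synchronous transducer must act as the identity on each letter. Nothing in your sketch supplies this, and combined with the misidentified invariant the proposal as written would not yield the theorem. The orientation splitting $\Z/2\Z$ via the reflection $\R$ is the one ingredient you share with the paper, though even there one must verify (as the paper does, again using $s$) that $\R$ commutes with the remaining generators so that the product decomposition is direct.
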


We note that when $n$ is prime, $\TOn{n} \cap \Ln{n} \cong \Z/2\Z$.

As mentioned above it is shown in the article \cite{OlukoyaAutTnr} that when $n>3$, $\TOnr$ and so $\Onr$ contain an isomorphic copy of Thompson's group  $F$. This is in fact an extension of a result in \cite{MBrinFGuzman} for the group $\TOns{n-1}$. It is of course impossible that $\TOn{2}$ contains a copy of $F$. The group $\mathcal{O}_{2}$ is however infinite (\cite{BlkYMaisANav}) and so it is natural to ask if  $\mathcal{O}_{2}$ contains a copy of $F$. This is  the case:

\begin{Theorem}\label{Thm:isocopyofF}
	Let $n \ge 2$ then $\Ons{r}$ contains an isomorphic copy of Thompson's group $F$.
\end{Theorem}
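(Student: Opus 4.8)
The plan is to produce an explicit homomorphism $\Delta\colon F\to\Bnr$ into the transducer model of $\aut{\Gnr}$ supplied by \cite{BlkYMaisANav}, and then to show that it descends to an injection into the outer automorphism group $\Ons{r}=\Bnr/\inn{\Gnr}$. Throughout I identify $F$ with a fixed copy of Thompson's group $F$ sitting inside $G_{n,1}$ and acting on a single Cantor factor $\Xno$. The homeomorphisms realising the elements $\Delta(f)$ are built by a single ``spine'' construction: fix the spine point $\bar p=(n-1)^{\omega}$ and the clopen decomposition of $\Xno$ into the cylinders $[(n-1)^{k}0]$ ($k\ge 0$) together with their complement $Z$ (the cylinders $[(n-1)^{k}j]$ with $1\le j\le n-2$, and the point $\bar p$). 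For $f\in F$ I define $\Delta(f)$ to act as $f$ — under the canonical identification $[(n-1)^{k}0]\cong\Xno$ — on each cylinder $[(n-1)^{k}0]$, and as the identity on $Z$; for general $r$ I place the spine inside one of the $r$ roots and let $\Delta(f)$ fix the remaining roots.

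First I would verify that each $\Delta(f)$ is realised by a finite bi-synchronizing transducer, so that $\Delta(f)\in\Bnr$. The transducer has a distinguished ``spine'' state that copies the letter $n-1$ to itself and loops; on reading $0$ it passes control to a copy of the finite synchronizing transducer of $f$, and on reading any other letter it passes to a global ``copy'' state. Because $f\in G_{n,1}$ its transducer synchronizes to its identity state, and a short computation shows that a word of the form $0u$, with $u$ a synchronizing word for $f$, resets every state of $\Delta(f)$; the same analysis applied to $f^{-1}$ shows that $\Delta(f)^{-1}=\Delta(f^{-1})$ synchronizes too, so $\Delta(f)$ is bi-synchronizing. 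Next I would check that $\Delta$ is a homomorphism: since $\Delta(f)$ preserves each spine cylinder $[(n-1)^{k}0]$ setwise and acts there exactly as $f$, two such maps compose cylinder-by-cylinder, giving $\Delta(f)\Delta(g)=\Delta(fg)$ at once, and $\Delta$ is visibly injective.

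The decisive point is that the composite $F\to\Bnr\to\Ons{r}$ is still injective, i.e.\ that $\Delta(f)$ is non-inner whenever $f\ne 1$; equivalently, since the inner automorphisms are exactly the transducers representing elements of $\Gnr$, that $\Delta(f)\notin\Gnr$. Here I would argue locally at $\bar p$. Every element of $\Gnr$ is given by a finite tree pair, so near any point — in particular near $\bar p$, which lies in a single leaf cylinder of its defining tree — it is a prefix replacement, i.e.\ a local translation. By contrast, for $f\ne 1$ the map $\Delta(f)$ performs a full, non-translation copy of $f$ on cylinders $[(n-1)^{k}0]$ accumulating at $\bar p$, so in no neighbourhood of $\bar p$ is it a single prefix replacement. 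Hence $\Delta(f)\notin\Gnr$, the kernel of $F\to\Ons{r}$ is trivial, and $\Ons{r}$ contains an isomorphic copy of $F$.

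The main obstacle I anticipate is the first step: establishing rigorously that $\Delta(f)$ is a bona fide element of $\Bnr$ rather than merely a bi-synchronizing homeomorphism. This requires checking that the transducer I write down is invertible, that both it and its inverse genuinely synchronize (not just that one synchronizing word exists for the forward map), and that the resulting homeomorphism normalises $\Gnr$ in the precise sense demanded by the isomorphism $\aut{\Gnr}\cong\Bnr$. I also note that for $n>3$ the conclusion already follows from the copy of $F$ in $\TOnr\le\Ons{r}$ obtained in \cite{OlukoyaAutTnr}, so the construction above is strictly needed only for $n\in\{2,3\}$, though it has the advantage of being uniform in $n$ and $r$ (and in fact the same argument embeds not just $F$ but any fixed copy of a subgroup of $G_{n,1}$, so the restriction to $F$ is merely what the statement requires).
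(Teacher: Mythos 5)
There is a genuine gap, and it is exactly at the point you flag as your ``main obstacle'': the spine maps $\Delta(f)$ are not elements of $\Bnr$ at all, because they are not automorphisms of $\Gnr$ and are not realised by \emph{strongly} synchronizing transducers. Your transducer for $\Delta(f)$ has a spine state $s$ (reading $n-1$ loops at $s$) and a global copy state $e$ (reading $n-1$ loops at $e$), and these induce different local actions when $f\ne 1$; hence $\pi\bigl((n-1)^{k},s\bigr)=s\ne e=\pi\bigl((n-1)^{k},e\bigr)$ for every $k$, so no word $(n-1)^{k}$ is a reset word and the transducer is not synchronizing at any level. The word $0u$ you exhibit is a reset word in the weak (\v{C}ern\'y) sense, but membership in $\Bnr$ requires \emph{every} word of some fixed length to be a reset word — the paper is explicit about this distinction — and minimization cannot repair this, since $\omega$-equivalence preserves the induced homeomorphism. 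One can also see the failure directly on the group side: take $g\in\Gnr$ a prefix swap carrying a cylinder $[j]$ (on which $\Delta(f)$ is the identity) onto $[n-1]$. Then $\Delta(f)^{-1}g\Delta(f)$ restricted to $[j]$ is $ju\mapsto (n-1)\,(u)\Delta(f)$, because the local action of $\Delta(f)$ at $(n-1)$ is $\Delta(f)$ itself by self-similarity of the spine. Since $\Delta(f)\notin\Gnr$ (as you yourself prove for the quotient step), this conjugate is not in $\Gnr$, so $\Delta(f)$ does not normalise $\Gnr$. The very feature you use to show non-innerness — infinitely many nontrivial local copies of $f$ accumulating at $(n-1)^{\omega}$ — is what destroys synchronization.

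For comparison, the paper's proof is designed precisely to avoid this trap. It isolates a subgroup $\On^{x}\le\Ons{1}$ cut out by conditions whose first clause demands that reading the marker letter $x$ from \emph{any} state lands at one fixed state $q_x$ — a built-in synchronization requirement — and exhibits explicit core, strongly synchronizing transducers $B,C\in\TOns{1}\cap\On^{x}$ (for $n\ge 3$) whose restrictions to $\{0,n-1\}^{\omega}$ generate a copy of $F$. The case $n=2$ is then handled by a marker construction: a finite code $B\subset X_{m}^{*}$ of equal-length words with only trivial overlaps is used to transport $\On^{x}\le\Oms{n}{1}$ into $\Oms{m}{1}$ for any $m\ge 2$. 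The overlap-freeness is what guarantees that a long enough window of input determines the local block structure and hence forces the state, i.e.\ it is the substitute for your spine that keeps strong synchronization. If you want to salvage your approach, you would need to replace the accumulating spine by such a marker/reset mechanism; as written, the kernel computation in your third paragraph is moot because the map $\Delta$ does not land in $\Bnr$ in the first place.
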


The proof of this result is worth saying a few words about as it explores the connections to symbolic dynamics.

 A standard technique for embedding various groups into the group $\aut{\xn^{\Z}, \shift{n}}$ is the use of \emph{marker automorphisms}. This appeared first in the significant paper of Hedlund \cite{Hedlund} and has been developed and generalised in several other papers (\cite{BoyleKrieger, BoyleLindRudolph88,VilleS18, KimRoush}). Perhaps the most significant use of this technique is in the paper \cite{KimRoush} where it is used to prove that the group $\aut{\xn^{\Z}, \shift{n}}$ contains an isomorphic copy of $\aut{X_{m}^{\Z}, \shift{m}}$ for any $m$. Our proof of  Theorem~\ref{Thm:isocopyofF} extends the marker technique to the groups $\On$. Specifically we prove using the marker construction that for each $n$ there is a subgroup $\On^{x}$ of $\On$ containing an isomorphic copy of $F$ which can be embedded in the group $\Oms{m}{1}$. It is still open whether the groups $\On$ are bi-embeddable. 

The paper is orgnanised as follows. In Section~\ref{Section:prelim} we introduce the various key definitions and background results we require; Section~\ref{Section:combiproof} contains the automata theoretic proof that $\TOn{2}$ is isomorphic to $\Z/2\Z$; Section~\ref{Section:intersection} studies the intersection of the group $\TOn{n}$ and $\Ln{n}$; the paper concludes in Section~\ref{Section:embeddingF} where we prove that $\On$ contains an isomorphic copy of $F$  for any $n \ge 2$.

 \section*{Acknowledgements}
The author wishes to acknowledge support from EPSRC research grant EP/R032866/1 and Leverhulme Trust Research Project Grant RPG-2017-159.

\section{Preliminaries}\label{Section:prelim}

This section contains the background definitions and results that we require in this article.

Throughout $\xn$ denotes the set $\{0,1,\ldots, n-1\}$.
\subsection{Words and Cantor space}
  Let $X$ be a finite set, we write $X^{\ast}$ for the set of all finite strings (including the empty string $\ew$) over the alphabet $X$. Write $X^{+}$ for the set $X^{\ast} \backslash \{\ew\}$. For an element $w \in X$, we write $|w|$ for the length of $w$. Thus, the empty string is the unique element of $X^{\ast}$ such that $|\ew| = 0$. For elements $x,y \in X^{\ast}$ we write $xy$ for the concatenation of $x$ and $y$.  Give $x \in X^{+}$ and $k \in \N$, we write $x^{k}$ for the word $w_1w_2\ldots w_{k}$ where $w_i = x$ for all $1 \le i \le k$. Given $k \in \N$ we write $X^{k}$ for the set of all words in $X^{+}$ of length exactly $k$.

Let $w \in X^{+}$ and write $w= w_1 w_2 \ldots w_{r}$ for $w_i \in X$, $1 \le i \le r$. Then for some $j \in \N$, $1 \le j \le r$ we say that $w' = w_{j}\ldots w_{r}w_1 \ldots w_{r}$ is the \emph{$j$'th rotation of $w$}. A word $w'$ is called a \emph{rotation} of $w$ if it is a $j$\textsuperscript{th} rotation of $w$ for some $1 \le j \le |w|$. If $j > 1$, then $w'$ is called a \emph{non-trivial rotation of} $w$. The word $w$ is said to be a prime word if $w$ is not equal to a non-trivial rotation of itself. Alternatively, $w$ is a prime word if there is no word $\gamma \in X^{+}$ such that $w = \gamma ^{|w|/|\gamma|}$. 

The relation on $X^{\ast}$ which relates two words if one is a  rotation of the other is an equivalence relation. We write $\simrot$ for this equivalence relation. It will be clear from the context which alphabet is meant.

We write $\wns$ for the set of all prime words over the alphabet $\xn$. For $k \in \N$, $\wnl{k}$ will denote the set of all prime words of length $k$ over the alphabet. We shall write $\rwnl{\ast}$ for the set of equivalence classes $\wns/\simrot$; $\rwnl{+}$  for the set of equivalence classes $\wnl{+}/\simrot$; $\rwnl{k}$ for the set of equivalence classes $\wnl{k}/\simrot$.

For $x,y \in X^{\ast}$ we write, $x \le y$ if $x$ is a prefix of $y$, in this case we write $y-x$ for the word $z \in X^{\ast}$ such that $xz = y$. The relation $\le$ is  a partial order on $X^{\ast}$.

We are require some additional orderings on the set $\xn^{\ast}$. Let $\xn$ be ordered according to the ordering $\le$ induced from $\N$. Then, for elements $x, y \in \xns$ we write $x \lelex y$ if there is a word $w \in \xns$ and $i \le j \in \xn$ such that  $wi$ is a prefix of $i$ and $wj$ is a prefix of $y$. We note that $\lelex$ is a partial order on $\xns$ called the \emph{lexicographic ordering}.

The lexicographic ordering can be extended to a total  order $\leslex$ on $\xns$ as follows. We write $x \leslex y$ if  either $x \lelex y$ or $|x| \le |y|$. The order $\leslex$ is called the \emph{short-lex ordering}.

An \emph{(positive) infinite sequence} over the alphabet $X$ is a map $x: \N \to X$ and we write such a sequence as $x_{0}x_1 x_2\ldots$ where $x_{i} = x(i)$ for $ i \in \N$. 

We may concatenate, in a natural way, a string with an infinite sequence. Given an element $\gamma \in X^{+}$ and an infinite sequence $x$ with prefix $\gamma$, we write $x - \gamma$ for the positive infinite sequence $y$ such that $\gamma y = x$.

Taking the product topology on the set $\xno$ gives rise to a space homeomorphic to Cantor space. We introduce some notation for basic open sets in this topology.  Let $\gamma \in \xns$ be a word. We denote by $U_{\gamma}$ the subset of $\xno$ consisting of all elements with prefix $\gamma$.  The set $\{ U_{\gamma} \mid \gamma \in \xns \}$ is a basis for the topology on $\xns$.

We note that the lexicographic ordering extends to a total order on $\xno$.

Define an equivalence relation $\simeqI$ on $ \xno$ as follows. For $x,y \in \xno$ set $x \simeqI y$ if and only if there is a word $w \in \xns$ and $i \in \xn \backslash \{ n-1\}$ such that $x = w i (n-1) (n-1) \ldots$ and $y = w (i+1) 0 0 0 \ldots$. The quotient space $\xno/\simeqI$ is homeomorphic to the interval $[0, 1]$. We often alternate between an element of $\xno$ and its corresponding point in $[0,1]$.

\subsection{Transducers}

In this section we introduce automata and transducers. 

\subsection{Automata and transducers}

An \emph{automaton}, in our context, is a triple $A=(X_A,Q_A,\pi_A)$, where
\begin{enumerate}
	\item $X_A$ is a finite set called the \emph{alphabet} of $A$ (we assume that
	this has cardinality $n$, and identify it with $X_n$, for some $n$);
	\item $Q_A$ is a countable set called the \emph{set of states} of $A$;
	\item $\pi_A$ is a function $X_A\times Q_A\to Q_A$, called the \emph{transition
		function}.
\end{enumerate}
We regard an automaton $A$ as operating as follows. If it is in state $q$ and
reads symbol $a$ (which we suppose to be written on an input tape), it moves
into state $\pi_A(a,q)$ before reading the next symbol. As this suggests, we
can imagine that the inputs to $A$ form a string in $\xn^\mathbb{N}$; after
reading a symbol, the read head moves one place to the right before the next
operation.

We can extend the notation as follows. For $w\in X_n^m$, let $\pi_A(w,q)$ be
the final state of the automaton with initial state $q$ after successively
reading the symbols in $w$. Thus, if $w=x_0x_1\ldots x_{m-1}$, then
\[\pi_A(w,q)=\pi_A(x_{m-1},\pi_A(x_{m-2},\ldots,\pi_A(x_0,q)\ldots)).\]
By convention, we take $\pi_A(\varepsilon,q)=q$.

For a given state $q\in Q_A$, we call the automaton $A$ which starts in
state $q$ an \emph{initial automaton}, denoted by $A_q$, and say that it is
\emph{initialised} at $q$.

An automaton $A$ can be represented by a labelled directed graph, whose
vertex set is $Q_A$; there is a directed edge labelled by $a\in X_a$ from
$q$ to $r$ if $\pi_A(a,q)=r$.

A \emph{circuit} in the automaton $A$ is therefore a word $w \in \xns$ and a state $q \in Q_{A}$ such that $\pi_{T}(w, q) = q$. We say that $w$ is a \emph{circuit based at $q$}. If $|w|= 1$, we say that $w$ is a \emph{loop based at $q$} and $q$ is called a \emph{$w$ loop state}.  A circuit $w$ based at a state $q$ is called \emph{basic} if, writing $w = w_1 \ldots w_{|w|}$,  for all $1 \le i < j < |w|$, $\pi_{T}(w_1\ldots w_i, q) \ne \pi_{T}(w_1\ldots w_{j}, q)$.

A \emph{transducer} is a quadruple $T=(X_T,Q_T,\pi_T,\lambda_T)$, where
\begin{enumerate}
	\item $(X_T,Q_T,\pi_T)$ is an automaton;
	\item $\lambda_T:X_T\times Q_T\to X_T^*$ is the \emph{output function}.
\end{enumerate}
Such a transducer is an automaton which can write as well as read; after
reading symbol $a$ in state $q$, it writes the string $\lambda_T(a,q)$ on an
output tape, and makes a transition into state $\pi_T(a,q)$. An \emph{initial
	transducer} $T_q$ is simply a transducer which starts in state $q$.

In the same manner as for automata, we can extend the notation to allow
the transducer to act on finite strings: let $\pi_T(w,q)$ and $\lambda_T(w,q)$
be, respectively, the final state and the concatenation of all the outputs
obtained when the transducer reads the string $w$ from state $q$.

A transducer $T$ can also be represented as an edge-labelled directed graph.
Again the vertex set is $Q_T$; now, if $\pi_T(a,q)=r$, we put an edge with
label $a|\lambda_T(a,q)$ from $q$ to $r$. In other words, the edge label
describes both the input and the output associated with that edge.

A transducer $T$ is said to be \emph{synchronous} if $|\lambda_T(a,q)|=1$
for all $a\in X_T$, $q\in Q_T$; in other words, when it reads a symbol, it
writes a single symbol. More generally, an \emph{asynchronous transducer} may write several symbols, or none at all, at a given
step. 

We can regard an automaton, or a transducer, as acting on an infinite string
in $\xno$, where $X_n$ is the alphabet. This action is given by iterating
the action on a single symbol; so the output string is given by
\[\lambda_T(xw,q) = \lambda_T(x,q)\lambda_T(w,\pi_T(x,q)).\]

Throughout this paper, we make the following
assumption:

\paragraph{Assumption} A transducer $T$ has the property that, when it reads
an infinite input string starting from any state, it writes an infinite  output string.

\medskip

The property above is equivalent to the property that any circuit in the underlying automaton has non-empty concatenated output. 

From the assumption, it follows that the transducer writes an infinite output string on reading any infinite input string from any state.
Thus $T_q$ induces a map $w\mapsto\lambda_T(w,q)$ from $\xno$ to itself; it is
easy to see that this map is continuous. If it is a
homeomorphism, then we call the state $q$ a \emph{homeomorphism state}. We write $\im(q)$ for the image of the map induced by $T_{q}$.

Two states $q_1$ and $q_2$ are said to be \emph{$\omega$-equivalent} if the
transducers $T_{q_1}$ and $T_{q_2}$ induce the same continuous map. (This can
be checked in finite time, see~\cite{GriNekSus}.)  More generally, we say that two
initial transducers $T_q$ and $T'_{q'}$ are \emph{$\omega$-equivalent} if they
induce the same continuous map on $\xno$.

A transducer is said to be \emph{weakly minimal} if no two states are $\omega$-equivalent. 

For a state $q$ of $T$ and a word $w\in X_n^*$, we let $\Lambda(w,q)$ be the
greatest common prefix of the set $\{\lambda(wx,q):x\in\xno\}$. The state
$q$ is called a \emph{state of incomplete response} if
$\Lambda(\varepsilon,q)\ne\varepsilon$; the length $|\Lambda(\varepsilon,q)|$ of the string $\Lambda(\varepsilon,q)$ is the \emph{extent of incomplete response} of the state $q$. Note that for a state $q \in  Q_{T}$, if the initial transducer $T_{q}$  induces a map from $\xno$ to itself with image size at least $2$, then $|\Lambda(\varepsilon, q)| < \infty$. 

We say that an initial transducer $T_{q}$ is \emph{minimal} if it is weakly minimal, has no states of
incomplete response and every state is accessible from the initial state $q$. A non-initial transducer $T$  is called minimal if for any state $q \in Q_{T}$ the initial transducer $T_{q}$ is minimal.  Therefore a non-initial transducer $T$ is minimal if it is weakly minimal, has no states of incomplete response and is strongly connected as a directed graph.

Two (weakly) minimal non-initial transducers $T$ and $U$  are said to be  \emph{$\omega$-equal} if there is a bijection $f: Q_{T} \to Q_{U}$, such that for any $q \in Q_{T}$, $T_{q}$ is $\omega$-equivalent to $U_{(q)f}$. Two (weakly) minimal initial transducers $T_{p}$ and $U_{q}$ are said to be $\omega$-equal if there is a bijection  $f: Q_{T} \to Q_{U}$, such that $(p)f = q$ and for any $t \in Q_{T}$,  $T_{t}$ is $\omega$-equivalent to $U_{(t)f}$. We shall use the symbol `$=$' to represent $\omega$-equality of initial and non-initial transducers. Two non-initial transducers are said to be $\omega$-equivalent if they have $\omega$-equal minimal representatives.

In the class of synchronous transducers, the  $\omega$-equivalence class of  any transducer has a unique weakly minimal representative.
Grigorchuk \textit{et al.}~\cite{GriNekSus} prove that the $\omega$-equivalence
class of an initialised transducer $T_q$ where every state has finite extent of incomplete response has a unique minimal representative.

Throughout this article, as a matter of convenience, we shall not distinguish between $\omega$-equivalent transducers. Thus, for example, we introduce various groups as if the elements of those groups are minimal transducers and not $\omega$-equivalence classes of transducers. 

Given two transducers $T=(X_n,Q_T,\pi_T,\lambda_T)$ and
$U=(X_n,Q_U,\pi_U,\lambda_U)$ with the same alphabet $X_n$, we define their
product $T*U$. The intuition is that the output for $T$ will become the input
for $U$. Thus we take the alphabet of $T*U$ to be $X_n$, the set of states
to be $Q_{T*U}=Q_T\times Q_U$, and define the transition and rewrite functions
by the rules
\begin{eqnarray*}
	\pi_{T*U}(x,(p,q)) &=& (\pi_T(x,p),\pi_U(\lambda_T(x,p),q)),\\
	\lambda_{T*U}(x,(p,q)) &=& \lambda_U(\lambda_T(x,p),q),
\end{eqnarray*}
for $x\in S_n$, $p\in Q_T$ and $q\in Q_U$. Here we use the earlier 
convention about extending $\lambda$ and $\pi$ to the case when the transducer
reads a finite string.  If $T$ and $U$ are initial with initial states $q$ and $p$ respectively then the state $(q,p)$ is considered the initial state of the product transducer $T*U$.

We say that an initial transducer $T_q$ is \emph{invertible} if there is an initial transducer $U_p$ such that $T_q*U_p$ and $U_p*T_q$ each induce the identity map
on $\xno$. We call $U_p$ an \emph{inverse} of $T_q$.  When this occurs we will denote $U_p$ as $T_q^{-1}$.

In automata theory a synchronous (not necessarily initial) transducer $$T = (X_n, Q_{T}, \pi_{T}, \lambda_T)$$ is \emph{invertible} if for any state $q$ of $T$, the map $\rho_q:=\pi_{T}(\centerdot, q): X_{n} \to X_{n}$ is a bijection. In this case the inverse of $T$ is the transducer $T^{-1}$ with state set $Q_{T^{-1}}:= \{ q^{1} \mid q \in Q_{T}\}$, transition function $\pi_{T^{-1}}: X_{n} \times Q_{T^{-1}} \to Q_{T^{-1}}$ defined by $(x,p^{-1}) \mapsto q^{-1}$ if and only if $\pi_{T}((x)\rho_{p}^{-1}, p) =q$, and output function  $\lambda_{T^{-1}}: X_{n} \times Q_{T^{-1}} \to X_{n}$ defined by  $(x,p) \mapsto (x)\rho_{p}^{-1}$. 

\subsection{From homeomorphisms to transducers}

From the preceding sections we see that every invertible initial transducer induces a homeomorphism on the Cantor space of infinite sequences in the alphabet. In this section exposit a construction in \cite{GriNekSus} for representing any homeomorphism of $\xno$ by a possibly infinite transducers.

We begin by defining some functions.

\begin{Definition}
	Let $\mathrm{lcp}$ be a map defined on the  power set of $\xns \sqcup \xno$ which given a subset  $X$ of $\xns \sqcup \xno$ returns the longest common prefix of the elements in $X$.
\end{Definition}

\begin{Definition}
	Let $h: \xno \to \xno$ be a homeomorphism and $\nu \in \xns$ be a word. We define the \emph{local action} of $h$ at $\nu$ as follows. For $x \in \xno$ we set $(x)h_{\nu}$ to be the element $y \in \xno$ such that $y:= (\nu x) h - \lcp{(U_{\nu})h}$ noting that since $h$ is a homeomorphism $\lcp{(U_{\nu})h}$ is finite.
\end{Definition}

We note that for a homeomorphism $h : \xno \to \xno$, for any word $\nu \in \xns$ the local action $h_{\nu}$ of $h$  at $\nu$ is a continuous injection with clopen image.

\begin{construction}
	Fix a homeomorphism $h: \xno \to \xno$. Let $T$ be the transducer with state set $Q_{T}= \xns$ and transition $\pi_{T}$ and output $\lambda_{T}$ functions  defined as follows.  For $\nu \in \xns$ and $a \in \xn$ 
	we set $\pi_{T}(a, \nu) = \nu_a$ and $\lambda_{T}(a, \nu) = \lcp{(U_{\nu a})h} - \lcp{(U_{\nu})h}$. We note that as the set $(U_{\tau})h$ is clopen for any $\tau \in \xns$, since $h$ is a homeomorphism, then it follows that $T = \gen{\xn, Q_{T}, \pi_{T}, \lambda_{T}}$ is a well-defined transducer.
	 
	It is not hard to show that for any word $\nu \in \xns$, the continuous map of $\xno$ induced by the initial transducer $T_{\nu}$ is in fact equal to the map $h_{\nu}$. 
\end{construction}

\subsection{Synchronizing automata and bisynchronizing transducers}

Given a natural number $k$, we say that an automaton $A$ with alphabet $X_n$
is \emph{synchronizing at level $k$} if there is a map
$\mathfrak{s}_{k}:X_n^k\mapsto Q_A$ such that, for all $q$ and any word
$w\in X_n^k$, we have $\pi_A(w,q)=\mathfrak{s}_{k}(w)$; in other words, if the
automaton reads the word $w$ of length $k$, the final state depends only on
$w$ and not on the initial state. (Again we use the extension of $\pi_A$ to
allow the reading of an input string rather than a single symbol.) We
call $\mathfrak{s}_{k}(w)$ the state of $A$ \emph{forced} by $w$; the map $\mathfrak{s}_{k}$ is called the \emph{synchronizing map at level $k$}. An automaton $A$ is called \emph{strongly synchronizing} if it is synchronizing at level $k$ for some $k$.

We remark here that the notion of synchronization occurs in automata theory
in considerations around the \emph{\v{C}ern\'y conjecture}, in a weaker sense.
A word $w$ is said to be a \emph{reset word} for $A$ if $\pi_A(w,q)$ is
independent of $q$; an automaton is called \emph{synchronizing} if it has
a reset word~\cite{Volkov2008,ACS}. Our definition of ``synchonizing at level $k$''/``strongly synchronizing"
requires every word of length $k$ to be a reset word for the automaton.

If the automaton $A$ is synchronizing at level $k$, we define the
\emph{core} of $A$ to be the set of states forming the image of the map
$\mathfrak{s}_{k}$. The core of $A$ is an automaton in its own right, and is also 
synchronizing at level $k$. We denote this automaton by $\core(A)$. We say that an
automaton or transducer is \emph{core} if it is equal to its core.

It is a result in \cite{BlkYMaisANav} that the product of two strongly synchronizing transducers results in another strongly synchronizing transducer.

Let $T_q$ be an initial transducer which is invertible with inverse $T_q^{-1}$. If $T_q$ is synchronizing at level $k$, and $T_q^{-1}$ is synchronizing at level $l$, 
we say that $T_q$ is \emph{bisynchronizing} at level $(k,l)$. If $T_q$ is
invertible and is synchronizing at level~$k$ but not bisynchronizing, we say
that it is \emph{one-way synchronizing} at level~$k$.

For a non-initial synchronous and invertible transducer  $T$ we also say $T$ is bi-synchronizing (at level $(k,l)$) if both $T$ and its inverse $T^{-1}$ are synchronizing at levels $k$ and $l$ respectively.


\subsection{Groups and monoids of Transducers}

We define several monoids and groups whose elements consists of $\omega$-equivalence classes of transducers which appear first in the papers \cite{BlkYMaisANav,BleakCameronOlukoya2, OlukoyaAutTnr}.  In practice, as each $\omega$-equivalence class contains a unique minimal element as exposited above, we represent the elements of these monoids  by minimal transducers.  

\subsubsection{The monoid \texorpdfstring{$\SOn$}{Lg} and some submonoids}

Let $\SOn$ be the set of all {\bfseries{non-initial}}, minimal, strongly synchronizing and core transducers satisfying the following conditions:

\begin{enumerate}[label = {\bfseries S\arabic*}]
	\item for any state $t \in Q_{T}$, the initial transducer $T_{t}$ induces a injective map $h_{t}: \xno \to \xno$ and,
	\item for any state $t \in Q_{T}$ the map $h_{t}$ has clopen image which we denote by $\im(t)$.
\end{enumerate}
The single state identity transducer is an element of $\SOn$ and we denote it by   $\id$. 

Following \cite{BleakCameronOlukoya2} we define a binary operation on the set $\SOn$ as follows. For two transducers $T,U \in \SOn$, the product transducer $T \ast U$ is strongly synchronizing. Let $TU$ be the transducer obtained from $\core(T\ast U)$ as follows. Fix a state $(t,u)$ of $\core(T\ast U)$. Then $TU$ is the core of the minimal representative of the initial transducer $\core(T\ast U)_{(t,u)}$. 

It is a result of \cite{BleakCameronOlukoya2} that $\SOn$ together with the binary operation  $\SOn \times \SOn \to \SOn$ given by $(T,U) \mapsto TU$ is a monoid.

A consequence of a result in \cite{BlkYMaisANav} is that an element  $T \in \SOn$ is an element of $\SOn$ if and only if  for any state $q \in Q_T$ there is a minimal, initial transducer $U_{p}$ such that the minimal representative of the product $(T\ast U)_{(p,q)}$ is strongly synchronizing and has trivial core.  If the transducer $U_{p}$ is strongly synchronizing as well then $\core(U_{p}) \in \SOn$ and satisfies $T\core(U_p) = \core(U_p)T =\id$. 

The group $\On$ is the largest inverse closed subset of $\SOn$. 

Let $\TSOn$ be the submonoid of $\SOn$ consisting of those elements $T$ which satisfy the following additional constraint:

\begin{enumerate}[label = {\bfseries T\arabic*}]
\item for every state $t \in Q_{T}$ $h_{T}: \xno \to \xno$ preserves the lexicographic ordering.
\end{enumerate}

Write $\TOn{n}$ for the group $\On \cap \TSOn$.

We give below a procedure for constructing the inverse of an element $T \in \SOn$.

\begin{construction}\label{construction:inverse}
	Let $T \in \On$. For $q \in Q_{T}$ define a map $L_{q}: \xnp \to \xns$ by setting $L_{q}(w)$ to be the greatest common prefix of the set $(U_{w})h_{q}^{_1}$.  Set $Q_{T'}$ to be the set of pairs $(w, q)$ where $w \in \xns$, $q \in Q_{T}$, $U_{w} \subset \im{q}$ and $(w)L_{q}   = \ew$. Define maps $\pi_{T'}: \xn \times Q_{T'} \to Q_{T'}$ and $\lambda_{T'}: \xn \times Q_{T'} \to \xns$ as follow:  $\pi_{T'}(a, (w,q)) = wa - \lambda_{T}((wa)L_{q}, q)$ and $\lambda_{T'}(a, (w,q)) = L_{q}(wa)$. Let $T' = \gen{ \xn, Q_{T'}, \pi_{T'}, \lambda_{T'}}$. It is shown in \cite{BlkYMaisANav} that $T'$ is a finite transducer and, setting  $U$ to be the minimal representative of $T'$, $UT = TU = \id$.
\end{construction}

Let $\SLn{n}$ be those elements $T \in \SOn$ which satisfy the following additional constraint:

\begin{enumerate}[label= {\bfseries SL\arabic*}]
	\setcounter{enumi}{2}
	\item for all strings $a \in X_n^{\ast}$ and any state $q \in Q_{T}$ such that $\pi_{T}(a,q) = q$, $|\lambda_{T}(a,q)| = |a|$.
	\label{Lipshitzconstraint}
\end{enumerate}

The set $\SLn{n}$ is a submonoid of $\SOn$ (\cite{BlkYMaisANav}). Set $\Ln{n} := \On \cap \SLn{n}$, the largest inverse closed subset of $\SLn{n}$. Likewise set  $\TSLn = \SLn{n} \cap \TSOn$ and $\TLn{n} = \Ln{n} \cap \TOn{n}$.

For each $1 \le r \le n-1$ the group $\On$ has a subgroup $\Ons{r}$ where $\Ons{n-1} = \Ons{n}$; we define $\Ln{n,r}:= \Ons{r} \cap \Ln{n}$ and $\TOns{r}:= \Ons{r} \cap \TOn{n}$. The following theorem combines results in from \cite{BlkYMaisANav} and \cite{OlukoyaAutTnr}

\begin{Theorem}
	For $1 \le r \le n-1$, $\Ons{r} \cong \out{G_{n,r}}$ and $\TOms{n}{r} \cong \out{T_{n,r}}$.
\end{Theorem}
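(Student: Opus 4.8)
The plan is to derive the statement by combining the two transducer characterisations of $\aut{\Gnr}$ and $\aut{\Tnr}$ with the core operation on $\SOn$, and then passing to the quotient by the inner automorphisms. First I would recall from \cite{BlkYMaisANav} the isomorphism $\aut{\Gnr} \cong \Bnr$, under which each automorphism is identified with the unique minimal bisynchronising transducer inducing it. The central device is the map $c \colon \Bnr \to \On$ sending a transducer $T$ to $\core(T)$. Since the product on $\SOn$ is \emph{defined} by taking the core of the minimal representative of $T \ast U$, and the product of strongly synchronising transducers is again strongly synchronising (\cite{BlkYMaisANav}), one obtains $\core(T\ast U) = \core(T)\,\core(U)$, so that $c$ is a group homomorphism into $\On$.

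Next I would compute the kernel. The key input from \cite{BlkYMaisANav} is that a bisynchronising transducer has trivial core (that is, $\core(T) = \id$) exactly when the homeomorphism it induces arises from conjugation in $\Gnr$ by an element of $\Gnr$, i.e. exactly when the corresponding automorphism is inner. Because $\Gnr$ is centre-free (its derived subgroup being simple), $\inn{\Gnr} \cong \Gnr$, and hence $\ker c = \inn{\Gnr} \cong \Gnr$. For the image, I would appeal to the characterisation in \cite{BlkYMaisANav} that the cores obtained from $\Bnr$ are precisely the elements of the subgroup $\Ons{r} \le \On$ attached to the parameter $r$. The first isomorphism theorem then delivers $\out{\Gnr} = \aut{\Gnr}/\inn{\Gnr} \cong \Bnr/\Gnr \cong \Ons{r}$.

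The $\Tnr$ statement follows by restricting the entire argument to the order-preserving setting. By \cite{OlukoyaAutTnr} we have $\aut{\Tnr} \cong \TBnr$, the subgroup of $\Bnr$ whose transducers induce homeomorphisms of $\xno$ preserving the lexicographic order; this is exactly the constraint \textbf{T1}, which carves $\TSOn$ out of $\SOn$ and hence $\TOn{n}$ out of $\On$. Restricting $c$ to $\TBnr$ then yields a homomorphism onto $\TOms{n}{r} = \Ons{r} \cap \TOn{n}$ with kernel $\inn{\Tnr} \cong \Tnr$ (again $\Tnr$ is centre-free), so that $\out{\Tnr} \cong \TOms{n}{r}$.

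The main obstacle is that the two essential facts, namely that trivial core characterises the inner automorphisms and that the image of $c$ is exactly the subgroup $\Ons{r}$ for the given $r$, are precisely the hard content of \cite{BlkYMaisANav,OlukoyaAutTnr} and so cannot be reproved here from scratch. Consequently the real labour of the write-up is twofold: verifying that $c$ is genuinely a homomorphism compatible with the $\SOn$-multiplication (rather than merely a map of underlying transducers), and correctly matching the index $r$ on the two sides, since a priori the subgroups only satisfy the nesting $\Ons{1} \le \Onr \le \Ons{n-1}$ and one must confirm that the core map hits exactly the $r$-th member of this chain.
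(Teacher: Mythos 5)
The paper offers no proof of this statement—it is quoted as background, explicitly described as ``combining results from'' \cite{BlkYMaisANav} and \cite{OlukoyaAutTnr}—and your outline is a faithful reconstruction of how those references establish it, correctly reducing everything to the two facts (trivial core characterises the inner automorphisms; the image of the core map attached to the parameter $r$ is exactly $\Ons{r}$) that constitute the main theorems of those papers. The only quibble is your justification of centre-freeness: simplicity of the derived subgroup does not by itself force a trivial centre, although $G_{n,r}$ and $T_{n,r}$ are indeed centre-free (by the standard argument from their faithful action on Cantor space with small supports), so the identification $\ker c\cong G_{n,r}$, respectively $T_{n,r}$, stands.
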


The paper \cite{BleakCameronOlukoya2} gives a faithful representation of the monoid $\SOn$ in the full transformation monoid $\tran(\rwnl{\ast})$.  

Let $\rotclass{\gamma} \in  \rwnl{\ast}$ and $T \in \SOn$. There is a unique state $q$ of $T$ such that $\pi_{T}(\gamma, q) = q$. Let $\nu \in \wnl{\ast}$ be a prime word such that $\lambda_{T}(\gamma, q)$ is a power of $\gamma$. We set $(\rotclass{\gamma})(T)\Pi = \rotclass{\nu}$.  The map $\Pi: \SOn \to \tran(\rwnl{\ast})$ is an injective homomorphism.

\section{An automata theoretic proof that \texorpdfstring{$\out{T_2} \cong \Z/2\Z$}{Lg}}\label{Section:combiproof}
We begin with a series of lemmas. 

The following two lemmas are from  \cite{OlukoyaAutTnr}, however the first is also a consequence of a result Brin in  \cite{MBrin2}.

\begin{lemma}\label{Lemma:orientationppreservingzerooneloopstatesactasidentityonloops}
Let $T \in \TOn{2}$ be an orientation preserving element. Then the unique state $q_{l(a)}$, $a \in \xt$ of $T$ such that $\pi_{T}(a, q_{l(a)}) = q_{l(a)}$, satisfies, $\lambda_{T}(a, q_{l(a)}) = a$.
\end{lemma}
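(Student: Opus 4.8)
The plan is to prove the statement in three steps: first pin down the loop states $q_{l(a)}$ from the synchronizing property, then use orientation preservation to force each loop output to be a power of the letter being read, and finally use invertibility to cut that power down to length one. Throughout write $w_{a}:=\lambda_{T}(a,q_{l(a)})$, so the goal is $w_{a}=a$.

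First I would justify the existence and uniqueness of $q_{l(a)}$. Since $T\in\TOn{2}$ is strongly synchronizing, say at level $k$, reading $a^{k}$ forces the state $\mathfrak{s}_{k}(a^{k})$ independently of the starting state. Writing $a^{k+1}=a\cdot a^{k}$, the state forced by $a^{k+1}$ equals both $\mathfrak{s}_{k}(a^{k})$ and $\pi_{T}(a,\mathfrak{s}_{k}(a^{k}))$, so $\mathfrak{s}_{k}(a^{k})$ is an $a$-loop state; conversely any $q$ with $\pi_{T}(a,q)=q$ gives $q=\pi_{T}(a^{k},q)=\mathfrak{s}_{k}(a^{k})$, so the $a$-loop state is unique and equals $q_{l(a)}=\mathfrak{s}_{k}(a^{k})$.

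The key step is to locate the images of the extreme points $0^{\omega}$ and $1^{\omega}$. Because $T$ is orientation preserving, each local action $h_{t}$ preserves the lexicographic order, and hence so does the homeomorphism $h$ of $\xno$ induced by $T$: if $x\lelex y$ have longest common prefix $\nu$, then (as $n=2$) $x=\nu 0x'$ and $y=\nu 1y'$, and after reading $\nu$ the order-preserving local action at $\pi_{T}(\nu,\cdot)$ sends $0x'\lelex 1y'$ to outputs in the correct order, whence $h(x)\lelex h(y)$. Since $T\in\On$ is invertible, $h$ is an order-preserving self-homeomorphism of $\xno$ and therefore fixes the least and greatest elements $0^{\omega}$ and $1^{\omega}$. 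Reading $0^{\omega}$, after $k$ symbols the transducer sits at $q_{l(0)}$ and loops, so the tail of $h(0^{\omega})$ is $w_{0}^{\omega}$; as $h(0^{\omega})=0^{\omega}$ this forces $w_{0}=0^{j_{0}}$ for some $j_{0}\ge 1$, and symmetrically $w_{1}=1^{j_{1}}$. I expect this transfer from the hypothesis ``orientation preserving'' to the conclusion that the loop outputs are powers of the corresponding letter to be the main obstacle, since it is where the combinatorial loop data must be tied to the global order-theoretic behaviour of $h$.

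Finally I would collapse the exponents using $TT^{-1}=\id$. Put $U=T^{-1}\in\TOn{2}$; it is again orientation preserving, so by the previous step its $0$-loop output is $0^{j_{0}'}$. In the product $T*U$ the pair $p=(q_{l(0)}^{T},q_{l(0)}^{U})$ is once more a $0$-loop state, and reading $0$ there outputs $\lambda_{U}(0^{j_{0}},q_{l(0)}^{U})=0^{j_{0}j_{0}'}$. Counting leading zeros now finishes the argument: under $h=h_{T}$ the number of leading zeros of $0^{m}1^{\omega}$ grows by a factor $j_{0}$, and under $h_{U}=h^{-1}$ by a factor $j_{0}'$, so from $h_{U}\circ h_{T}=\id$ applied to $0^{m}1^{\omega}$ (which has exactly $m$ leading zeros) one gets $j_{0}j_{0}'m=m+O(1)$, forcing $j_{0}j_{0}'=1$ and hence $j_{0}=1$. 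The identical computation with leading ones gives $j_{1}=1$. Therefore $w_{0}=0$ and $w_{1}=1$, i.e.\ $\lambda_{T}(a,q_{l(a)})=a$ for each $a\in\xt$, as required.
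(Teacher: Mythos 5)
The paper does not actually prove this lemma --- it is imported from \cite{OlukoyaAutTnr} (and attributed in part to Brin) --- so I am judging your argument on its own terms. Your overall architecture is sound and, in my view, the right one: strong synchronization gives existence and uniqueness of the $a$-loop states; order-preservation forces $\lambda_{T}(a,q_{l(a)})\in\{a\}^{+}$; and invertibility collapses the exponent to $1$. Step 1 is correct as written, and the final counting argument in Step 3 is correct: the pair $(q_{l(0)}^{T},q_{l(0)}^{U})$ is indeed a $0$-loop state of $T\ast U$ with loop output $0^{j_{0}j_{0}'}$, and since $TU=\id$ the product initialised there has trivial core, so the composite of the two local maps agrees with the identity up to a bounded prefix replacement; together with the fact that $\lambda_{T}(1x,q_{l(0)})$ has a uniformly bounded number of leading zeros (by injectivity and compactness), your leading-zero count gives $j_{0}j_{0}'=1$. (Just phrase it that way: the composite is not literally the identity, only eventually so, which is exactly what your $O(1)$ term absorbs.)

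The one genuine flaw is the justification of the key claim $w_{0}\in\{0\}^{+}$ in Step 2. You invoke ``the homeomorphism $h$ of $\xno$ induced by $T$'' and argue that, being an order-preserving self-homeomorphism, it fixes $0^{\omega}$. But $T$ is a \emph{non-initial core} transducer: a generic state $q$ induces an injection onto a proper clopen subset $\im(q)$, not a self-homeomorphism of $\xno$, and the initial transducer actually representing the automorphism (whose core is $T$) is only determined up to composition with elements of $V$ --- for $T_{2,1}$ it is a circle homeomorphism with no distinguished fixed point --- so there is no legitimate $h$ fixing $0^{\omega}$ to appeal to. The claim you need is nevertheless true and has a purely local proof, which is how you should repair this: $h_{q_{l(0)}}$ is a lexicographic-order-preserving injection of $\xno$ onto the clopen set $\im(q_{l(0)})$, hence sends the minimum $0^{\omega}$ of $\xno$ to the minimum of $\im(q_{l(0)})$; a nonempty clopen set is a finite union of cylinders, so this minimum has the form $\nu 0^{\omega}$; on the other hand $h_{q_{l(0)}}(0^{\omega})=w_{0}^{\omega}$ with $w_{0}\ne\ew$ (circuits have nonempty output), and a periodic sequence that is eventually $0$ must equal $0^{\omega}$, whence $w_{0}\in\{0\}^{+}$. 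The symmetric argument with maxima gives $w_{1}\in\{1\}^{+}$. With that substitution the proof goes through.
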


\begin{Remark}
The single state transducer $\R = \gen{\xt, \{\R\}, \pi_{\R}, \lambda_{\R}}$ where $\pi_{\R}(a, \R) = \R$ and $\lambda_{\R}(a, \R) = 1-a$ is an element of $\TOn{2}$ of order $2$. Moreover, for $T \in  \On$, the element $\R T\R$ of $\TOn{2}$, the conjugate of $T$ by $\R$, is the transducer such that for $q \in Q_{T}$ and $a,b \in \xt$, $\pi_{\R T\R}(a, (\R,q,\R)) = (\R,P,\R)$ if and only if $\pi_{T}(1-a, q) = p$ and $\lambda_{\R T\R}(a, (\R,q,\R)) = b$ if and only if $\lambda_{T}(1-a, q) = 1-b$. Thus $\R T\R$ is and orientation preserving element of $\TOn{2}$ if and only if $T$ is.
\end{Remark}

We shall use the remark above, to convert statements about how a state acts when processing zeroes, to how it acts when processing ones. We can also deduce information about the loop state $q_{l(1)}$ by studying properties of the loop state $q_{l(0)}$ with the remark above as well.

\begin{lemma}\label{Lemma:imageofstateclosedsubinterval}
Let $T \in \TOn{2}$ be an orientation preserving element. Let $q$ be any state of $T$ and suppose $\im(q) = U_{\nu_1} \ldots U_{\nu_{m_q}}$  for $\nu_1 \lelex \nu_2 \lelex \ldots \lelex \nu_{m_q}$ elements of $\xtp$. Then, for all $1 \le i < m_{q}$, $\nu_{i}1^{\omega} \simeqI \nu_{i+1}0^{\omega}$  and $\lcp{\{ \nu_i \mid 1 \le i \le m_{q} \}} = \ew.$ 
\end{lemma}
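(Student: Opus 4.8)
The plan is to derive both assertions from one mechanism. Since $\TOn{2}$ is a group, $T^{-1}\in\TOn{2}$ as well, so \emph{every} state of $T^{-1}$ induces an order-preserving map (condition T1 applies to $T^{-1}$, not just to a single initial transducer). Fix a state $q$ of $T$ and let $q'$ be the state of $T^{-1}$ with $T_{q}*T^{-1}_{q'}=\id$ (produced by Construction~\ref{construction:inverse}); write $h_{q}$, $h_{q'}$ for the induced maps. Then $h_{q'}\circ h_{q}=\id_{\xno}$, both maps are injective (S1) with clopen image (S2), and both preserve $\lelex$ (T1); in particular each is \emph{strictly} order preserving. The one combinatorial fact about $\xno$ I will use is elementary: two points $x\lelex y$ are lexicographically adjacent (nothing lies strictly between them) if and only if $x\simeqI y$, since if $\gamma$ is their longest common prefix then adjacency forces $x=\gamma 0 1^{\omega}$ and $y=\gamma 1 0^{\omega}$.

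First I would prove $\lcp{\{\nu_i\}}=\ew$. As $h_{q}$ is order preserving, $h_{q}(0^{\omega})$ is the $\lelex$-least point of $\im(q)$, namely $\nu_{1}0^{\omega}$, and $h_{q}(1^{\omega})$ is the greatest, $\nu_{m_{q}}1^{\omega}$. Applying the left inverse gives $h_{q'}(h_{q}(0^{\omega}))=0^{\omega}$. If $h_{q}(0^{\omega})\neq 0^{\omega}$ then $0^{\omega}\lelex h_{q}(0^{\omega})$ (global minimum), and strict order preservation of $h_{q'}$ yields $h_{q'}(0^{\omega})\lelex h_{q'}(h_{q}(0^{\omega}))=0^{\omega}$, contradicting minimality of $0^{\omega}$. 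Hence $h_{q}(0^{\omega})=0^{\omega}$, forcing $\nu_{1}=0^{|\nu_{1}|}$, so $\nu_{1}$ begins with $0$; symmetrically $h_{q}(1^{\omega})=1^{\omega}$ and $\nu_{m_{q}}$ begins with $1$. Since the family $\{\nu_{i}\}$ then contains members beginning with different letters, its longest common prefix is $\ew$.

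Next I would establish the adjacency statement. The key intermediate claim is that $h_{q}$ respects $\simeqI$: if $x\simeqI y$ with $x\lelex y$, set $a=h_{q}(x)$, $b=h_{q}(y)$, so $a\lelex b$; were $a\not\simeqI b$, the adjacency fact would give $z$ with $a\lelex z\lelex b$, whence $x=h_{q'}(a)\lelex h_{q'}(z)\lelex h_{q'}(b)=y$, contradicting that $x,y$ are adjacent. Thus $h_{q}$ descends to a continuous order-preserving map $\bar h_{q}$ on $\xno/\simeqI\cong[0,1]$, and the image of $\im(q)$ in $[0,1]$ equals $\bar h_{q}([0,1])$. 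Because $[0,1]$ is connected, this image is a subinterval; equivalently the dyadic intervals attached to $U_{\nu_{1}},\dots,U_{\nu_{m_{q}}}$ abut, which is exactly $\nu_{i}1^{\omega}\simeqI\nu_{i+1}0^{\omega}$ for $1\le i<m_{q}$.

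The hard part will be the identification of the order-preserving left inverse $h_{q'}$: one must verify that the state $q'$ supplied by Construction~\ref{construction:inverse}, viewed inside the group element $T^{-1}\in\TOn{2}$, has \emph{global} induced map (not merely its restriction to $\im(q)$) strictly order preserving and satisfying $h_{q'}\circ h_{q}=\id_{\xno}$. This is precisely where orientation preservation of the whole of $\TOn{2}$, rather than of one initial transducer, enters, and it is what fails for a general element of $\On$ (where images need not span a half). Granting it, both assertions follow as above. I note that Lemma~\ref{Lemma:orientationppreservingzerooneloopstatesactasidentityonloops} together with the synchronizing property offers an independent handle on the extreme points, since reading $0^{\omega}$ (resp.\ $1^{\omega}$) from any state drives the automaton into the unique $0$-loop (resp.\ $1$-loop) state and so produces output ending in $0^{\omega}$ (resp.\ $1^{\omega}$); this can replace part of the extremes computation but is not needed for the argument above.
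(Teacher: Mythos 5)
The paper does not actually prove this lemma (it is imported from \cite{OlukoyaAutTnr}), so your argument has to stand on its own, and it does not: the step you defer as ``the hard part'' is not merely hard, it is false. For a state $q$ that is not a homeomorphism state there is \emph{no} state $q'$ of $T^{-1}$ whose induced map is defined on all of $\xno$, strictly order preserving, and satisfies $h_{q'}\circ h_{q}=\id_{\xno}$. Your own computation shows why: such a $q'$ would force $h_{q}(0^{\omega})=0^{\omega}$ and $h_{q}(1^{\omega})=1^{\omega}$, hence $0^{\omega},1^{\omega}\in\im(q)$ and $\im(q)=\xno$ for every state --- which is precisely what the lemma does not (and cannot) claim. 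The Remark following the lemma asserts only that $\im(q)$ is a closed dyadic interval with $\sfrac{1}{2}$ in its \emph{interior}, and Lemma~\ref{Lemma:characterisingcollapsoftwostates} explicitly handles states $p_0$ with $1^{\omega}\notin\im(p_0)$ and least point $\xi_a 0^{\omega}$ for a possibly nonempty $\xi_a$. Concretely, in Construction~\ref{construction:inverse} the states of $T^{-1}$ are pairs $(w,q)$ with $U_{w}\subseteq\im(q)$ and $(w)L_{q}=\ew$; the pair $(\ew,q)$ is a state only when $\xno\subseteq\im(q)$, and a general state $(w,q)$ inverts $h_{q}$ only on the cone $U_{w}$. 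So your derivation of $\nu_{1}=0^{|\nu_{1}|}$ is unsound (that $\nu_{1}$ begins with $0$ and $\nu_{m_{q}}$ with $1$ is true, but it is a \emph{consequence} of the lemma, not an available input).

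Both conclusions need different mechanisms. The prefix claim requires no inverse at all: $\lcp{\{\nu_{i}\mid 1\le i\le m_{q}\}}$ is exactly the greatest common prefix of $\im(q)$, i.e.\ $\Lambda(\ew,q)$, and this is $\ew$ because elements of $\SOn$ are minimal and therefore have no states of incomplete response. For the adjacency claim, your intermediate target (``$h_{q}$ respects $\simeqI$'') is the right one, but the honestly available inverse --- the strictly order-preserving partial map $h_{q}^{-1}\colon\im(q)\to\xno$ --- cannot deliver it: if $h_{q}(x)\lelex z\lelex h_{q}(y)$ with $z\notin\im(q)$ there is nothing to pull back, and excluding that case is equivalent to the order-convexity of $\im(q)$ you are trying to prove. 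The missing ingredient is external to the single state: one must use that $T$ represents an automorphism of $T_{2,1}$, i.e.\ conjugation by a homeomorphism $h$ of the circle respecting $\simeqI$, so that $(U_{\nu})h$ is an arc and the local action at $\nu$ has as image an arc with its common prefix stripped off; the states of the minimal core are exactly such local actions for sufficiently long $\nu$. Without some input of this kind the connectedness assertion does not follow from conditions S1, S2 and T1 alone.
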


\begin{Remark}
The above lemmas imply that for $T \in \TOn{2}$ an orientation preserving element, for any state $q \in Q_{T}$, its image $\im(q)$ corresponds to a closed dyadic subinterval interval of $I$ whose interior contains the point $\sfrac{1}{2}$. 
\end{Remark}

\begin{lemma}\label{Lemma:characterisingcollapsoftwostates}
Let $T \in \TOn{2}$ be an orientation preserving element. Let $q_1$ and $q_2$ be distinct states of $T$ such that $\pi_{T}(a, q_1) = \pi_{T}(a, q_2) := p_a$ for $a \in \xt$. For $a \in \{0,1\}$, set $\xi_a, \bar{\xi}_a, \eta_a, \bar{\eta}_a \in X_{2}^{\ast}$, such that, $\xi_a$ does not have a suffix equal to $0$, $\eta_a$ does not have a suffix equal to $1$, for $E \in \{ \xi, \eta\}$,  $E_a $ and $\bar{E}_a$ are either both empty or differ only in their last digit, $\xi_a0^{\omega}$ is the smallest element of $\im(p_a)$ and $\eta_a1^{\omega}$ is the largest element of $\im(p_a)$. Then, there are numbers $j_1, j_2, i_1, i_2 \in \N$, such that  one of the following things holds:
\begin{enumerate}[label=(\alph*)]
	\item $\lambda_{T}(0, q_1) = \lambda_{T}(0, q_2) = \ew$, $U_0 \subset \im(p_1)$, $U_1 \not\subset \im(p_0)$, $\eta_0, \bar{\eta}_{0} \ne  \varepsilon$, for $b \in \{1,2\}$, $\lambda_{T}(1, q_b) = \bar{\eta_0}0^{j_b}$,  and $j_1 \ne j_2$;
	\item  $\lambda_{T}(1, q_1) = \lambda_{T}(1, q_2) = \ew$, $U_1 \subset \im(p_0)$, $U_0 \not\subset \im(p_1)$, $\xi_1, \bar{\xi}_{1} \ne \varepsilon$, for $b \in \{1,2\}$, $\lambda_{T}(0, q_b) = \bar{\xi_1}1^{i_b}$, and $i_1 \ne i_2$
	\item $U_1 \subset \im(p_0)$, $U_0 \subset \im(p_1)$, for $b\ \in \{1,2\}$, $\lambda_{T}(0, q_b) = 01^{i_b}$, $\lambda_{T}(1, q_b) = 10^{j_b}$ and either $i_1 \ne i_2$ or $j_2 \ne j_2$. \label{alternativethree}
\end{enumerate}
\end{lemma}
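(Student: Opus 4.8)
The plan is to pass to the interval picture furnished by the Remark following Lemma~\ref{Lemma:imageofstateclosedsubinterval}, which tells us that for each state $s$ the image $\im(s)$ is a closed dyadic subinterval of $[0,1]$ containing $\sfrac{1}{2}$ in its interior. Since $\pi_T(a,q_1)=\pi_T(a,q_2)=p_a$, the map induced by $T_{q_b}$ (for $b\in\{1,2\}$) carries the cylinder $U_a$ onto $\lambda_T(a,q_b)\im(p_a)$. Because $T_{q_b}$ is injective (constraint S1) and orientation preserving, the two sets $\lambda_T(0,q_b)\im(p_0)$ and $\lambda_T(1,q_b)\im(p_1)$ are disjoint, the first lying entirely to the left of the second, and their union is $\im(q_b)$, a closed dyadic interval with $\sfrac{1}{2}$ in its interior. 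First I would record the two constraints this produces: an \emph{adjacency} (junction) equation $\lambda_T(0,q_b)\eta_0 1^{\omega}\simeqI\lambda_T(1,q_b)\xi_1 0^{\omega}$, expressing that the largest element of the left piece is the smallest element of the right piece, and a \emph{straddling} condition coming from the fact that $\im(q_b)$ meets both $U_0$ and $U_1$ (equivalently, $T_{q_b}$ has empty incomplete response because $\lcp{\im(q_b)}=\ew$).

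Writing $J_b$ for the common junction point, the analysis then splits according to the position of $J_b$ relative to $\sfrac{1}{2}$. If $J_b>\sfrac{1}{2}$ then $\sfrac{1}{2}$ lies in the interior of the left piece, which therefore cannot be confined to a proper cylinder; this forces $\lambda_T(0,q_b)=\ew$ and $J_b=\eta_0 1^{\omega}$, the right endpoint of $\im(p_0)$. Symmetrically $J_b<\sfrac{1}{2}$ forces $\lambda_T(1,q_b)=\ew$, while $J_b=\sfrac{1}{2}$ forces the left piece to end and the right piece to begin exactly at $\sfrac{1}{2}$. Solving the junction equation in each situation I expect to recover precisely the three alternatives: the identifications $\eta_0=\ew\Leftrightarrow U_1\subset\im(p_0)$ and $\xi_1=\ew\Leftrightarrow U_0\subset\im(p_1)$ translate the endpoint conditions on $\im(p_0),\im(p_1)$ into the stated containments, and absorbing the tail $1^{\omega}$ (resp.\ $0^{\omega}$) against a \emph{reachable} endpoint $1$ (resp.\ $0$) of the target image produces the free runs $1^{i_b},0^{j_b}$ appearing in $\lambda_T(0,q_b)=\bar{\xi}_1 1^{i_b}$, $\lambda_T(1,q_b)=\bar{\eta}_0 0^{j_b}$, and in $\lambda_T(0,q_b)=01^{i_b}$, $\lambda_T(1,q_b)=10^{j_b}$.

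Two points remain, and together they form the crux. I must first see that $q_1$ and $q_2$ fall into the \emph{same} one of the three cases. Since the governing dichotomies (is $\eta_0=\ew$? is $\xi_1=\ew$?) are properties of the common targets $p_0,p_1$ alone, it suffices to show that when neither holds --- i.e.\ $\im(p_0)$ does not reach $1$ and $\im(p_1)$ does not reach $0$ --- the tiling of an interval about $\sfrac{1}{2}$ by a left copy of $\im(p_0)$ and a right copy of $\im(p_1)$ is \emph{unique}, so that no distinct pair $q_1\ne q_2$ can share these transitions. This is the main obstacle and is where I expect the genuine work. The position trichotomy already excludes $J_b=\sfrac{1}{2}$ here, and a short dyadic computation rules out the coexistence of the two surviving candidate tilings (one with $\lambda_T(0,q_b)=\ew$, one with $\lambda_T(1,q_b)=\ew$): equating them forces the left endpoint of $\im(p_1)$ to have the form $N/(2^{L}-1)$ with $0<N<2^{L}-1$, which is never a dyadic rational. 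Hence a genuine collapse $q_1\ne q_2$ occurs only when $\im(p_0)$ reaches $1$ or $\im(p_1)$ reaches $0$, i.e.\ in cases (a), (b) or (c), and a similar uniqueness statement shows that within a fixed case the output word on each letter is determined up to the integer parameters $i_b,j_b$.

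Finally, once the case is fixed the admissible tilings form exactly a one- or two-parameter family, the freedom being precisely the number of absorbed digits enabled by the reachable endpoint. Since $T$ is weakly minimal the states $q_1,q_2$ induce distinct maps, so their parameter data must differ; this yields $j_1\ne j_2$ in case (a), $i_1\ne i_2$ in case (b), and $(i_1,j_1)\ne(i_2,j_2)$ in case (c), completing the argument. (I note in passing that the printed ``$j_2\ne j_2$'' at the end of case~\ref{alternativethree} should read $j_1\ne j_2$.)
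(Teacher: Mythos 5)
Your proposal is correct, and it reaches the three alternatives by essentially the same mechanism as the paper — the interval picture from Lemma~\ref{Lemma:imageofstateclosedsubinterval}, the adjacency of $\lambda_T(0,q_b)\im(p_0)$ and $\lambda_T(1,q_b)\im(p_1)$ at a junction point, and weak minimality forcing the tail parameters $i_b,j_b$ to differ. Where you genuinely diverge is at the step you correctly single out as the crux: ruling out the mixed configuration in which $q_1$ has $\lambda_T(0,q_1)=\ew$ while $q_2$ has $\lambda_T(1,q_2)=\ew$ (equivalently $J_1>\sfrac12$, $J_2<\sfrac12$). The paper excludes this by a combinatorial prefix/suffix argument: it shows $\lambda_T(1,q_1)$ must be a proper prefix of $\eta_0'1$, hence $\xi_1$ a proper suffix of $\eta_0'1$ and a proper prefix of $0w\eta_0'1$, and then contradicts the injectivity of $q_2$ via the left endpoint $\xi_10^\omega$ of $\im(p_1)$. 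You instead solve the two junction equations simultaneously, obtaining that the left endpoint of $\im(p_1)$ equals $\overline{w_0w_1}/(2^{|w_0w_1|}-1)$ with $w_0$ starting in $0$ and $w_1$ in $1$ (i.e.\ the point $0.(w_0w_1)^{\omega}$), which is not a dyadic rational, contradicting the clopenness of $\im(p_1)$. Both arguments are valid; yours is shorter and arguably more robust since it avoids tracking which of $\xi_1$, $\eta_0'1$ is a prefix of which, at the cost of invoking the real-number/dyadic-endpoint picture rather than staying with strings. Your organisation of the remaining cases by the position of the junction relative to $\sfrac12$, governed by whether $\eta_0$ and $\xi_1$ are empty (properties of $p_0,p_1$ alone, hence shared by $q_1$ and $q_2$), is a clean repackaging of the paper's case split on emptiness of $\lambda_T(0,q_b)$ and $\lambda_T(1,q_b)$, and your observation about the typo $j_2\ne j_2$ in alternative~(c) is right.
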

\begin{proof}
First suppose that $\lambda_{T}(0, q_1) = \ew$. Note that under this assumption $p_0$ is not  a homeomorphism state otherwise we have a contradiction of the injectivity of $q_1$. 


We observe that $\eta_0 \ne \ew$ otherwise the state $q_{1}$ is either not injective or does not preserve the lexicographic ordering yielding a contradiction. In fact $\eta_0$ has $1$ as a prefix. 

Let $\eta'_{0} \in \xtp$ be such that $\eta'_{0}0 = \eta_0$ and let $\phi = \lcp{\{ \delta \in  X_{2}^{\omega} \mid \eta'_{0} 10^{\omega} \leqlex \delta\}}$. Then $\phi$ has a prefix equal to $1$ and, by minimality of $T$,  is a prefix of $\lambda_{T}(1, q_1)$. Moreover, Lemma~\ref{Lemma:imageofstateclosedsubinterval}, implies that $\lambda_{T}(1,q_1)\xi_1 0^{\omega} = \eta'_010^{\omega}$.  

Suppose that $\lambda_{T}(0, q_2) \ne \ew$. As $T$ is minimal, and there is a word $\tau \in \xtp$ such that $U_{0\tau} \subset \im(q_2)$, there is a word $w \in \xts$ such that $\lambda_{T}(0, q_2) = 0w$. Since $\pi_{T}(0,q_2) = p_0$, it follows that the largest (in the lexicographic ordering) element  of the set $\{ \lambda_{T}(0\delta, q_2) \mid \delta \in X_{2}^{\omega} \}$ is equal to $0w \eta'_{0}01^{\omega}$.  Minimality of $T$, and the facts that $\im(q_2)$ corresponds to a closed dyadic subinterval of $[0,1]$ and $1^{\omega}$ is not an element  of $\im(p_0)$, imply that $\lambda_{T}(1, q_2) = \ew$.

We claim that  $\lambda_{T}(1,q_1)$ is a proper prefix of  $\eta_{0}'1$. For suppose this is not the case, then $\lambda_{T}(1, q_1) =   \eta_{0}'10^{j}$ for some $j \in \N$. Since $U_{\eta'_{0}0} \subset \im(p_0)$, then it must be the vase that $U_0 \subset \im(p_1)$. However this contradicts injectivity of  $q_2$ as $\lambda_{T}(0,q_2) = 0w$ and $\lambda_{T}(1, q_2) = \ew$.

Since $\lambda_{T}(1,q_1)$ is a proper prefix of $\eta'_{0}1$, it follows that $\xi_1$,  is a proper suffix of $\eta'_{0}1$. Thus, we have $|0w\eta'_{0}0| - |\xi_1| \ge 2$. Moreover, $\xi_1$ must be a prefix (and so a proper prefix) of $0w \eta'_{0}1$  otherwise we contradict Lemma~\ref{Lemma:imageofstateclosedsubinterval} since $0w\eta'_{0}01^{\omega}$ is the largest element of $\{ \lambda_{T}(0\delta, q_2) \mid \delta \in X_{2}^{\omega} \}$ and $\lambda_{T}(1, q_2) = \ew$. However, the smallest element of $\im(p_1)$ is the point $\xi_1 0^{\omega}$, and as $\xi_1$ is a proper prefix of  $0w \eta'_{0}1$, this contradicts injectivity of the state $q_2$.

 Thus we conclude that $\lambda_{T}(0, q_2) = \ew$ as well. However since $\bar{\eta}_{0}0^{\omega} = \lambda_{T}(1, q_1)\xi_1 0^{\omega} = \lambda_{T}(1, q_2) \xi_1 0^{\omega}$. Then, we either have $\lambda_{T}(1, q_1) = \lambda_{T}(1, q_2)$ which contradicts the fact that $T$ is minimal and $q_1 \ne q_2$, or there are numbers $j_1, j_2 \in \N$, with $j_1 \ne j_2$ such that $\lambda_{T}(1, q_1) = \bar{\eta}0^{j_1}$ and $\lambda_{T}(1, q_2) = \bar{\eta}0^{j_2}$. In the latter case, we see that $U_0 \subset \im(p_1)$.
 

Conjugating by the element $\R$ we deduce that for $T \in \TOn{2}$,   $\lambda_{T}(1, q_1) = \ew$ if and only if $\lambda_{T}(1, q_2) = \ew$ and the following things hold: $\xi_1 \ne \varepsilon$, $U_1 \subset \im(p_0)$, $U_0 \not\subseteq \im(p_1)$ there are number $i_1, i_2 \in \N$, such that  for $b \in \{1,2\}$, $\lambda_{T}(0, q_b) = \bar{\xi}_{1}1^{i_b}$, and $i_1 \ne i_2$. 

Thus we may assume that for $a \in \{0,1\}$ and $b \in \{1,2\}$, $\lambda_{T}(a, q_b) \ne \ew$.

To conclude the lemma, we again make use of the observations that the image of  a state of $T$ corresponds to a closed dyadic interval such that $\sfrac{1}{2}$ is a point in its interior. Thus we observe that for $b \in \{1,2\}$, we have $\tau_{b} =\lambda_{T}(0, q_b)$ is a word beginning with $0$ and $\nu_b:= \lambda_{T}(1, q_b)$ is a word beginning with $1$. If  $\tau_{b}1^{\omega} \not \simeqI \nu_b0^{\omega}$, then the image of $q_b$ does not correspond to a closed dyadic interval. This forces that $\tau_b = 01^{i_b}$ and $\nu_b = 10^{j_b}$ for some $i_b, j_b \in \N$. However, this forces that $1^{\omega} \in \im(p_0)$ and $0^{\omega} \in \im(p_1)$. The statement \ref{alternativethree} of the lemma can now be deduced from these facts.
\end{proof}

The proof of the following lemma from \cite{BleakCameronOlukoya1}  follows straight-forwardly from the definitions.

\begin{lemma}\label{lem:twolettersonestate}
	Let $T$ be a strongly synchronizing automaton and let $k$ be the minimal synchronizing level of $T$. Suppose that there are distinct elements $x,y \in \Xn$ and states $p_1, p_2, p \in Q_{T}$ such that $\pi_{T}(x, p_1) = \pi_{T}(y, p_2)$. Then for any word of $\gamma$ length $k-1$, the maps $\pi_{T}(x\gamma, \cdot): Q_{T} \to Q_{T}$ and $\pi_{T}(y\gamma, \cdot): Q_{T} \to Q_{T}$ have the same image.
\end{lemma}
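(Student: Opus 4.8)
The plan is to observe that the two maps under consideration are in fact \emph{constant} maps, so that the statement reduces to an equality of single states which is then forced directly by the hypothesis. The key point is a length count: since $x,y\in\Xn$ are single letters and $|\gamma|=k-1$, both $x\gamma$ and $y\gamma$ have length exactly $k$. Because $T$ is synchronizing at level $k$, for every state $q\in Q_{T}$ we have $\pi_{T}(x\gamma,q)=\mathfrak{s}_{k}(x\gamma)$ and $\pi_{T}(y\gamma,q)=\mathfrak{s}_{k}(y\gamma)$, independently of $q$. Hence the image of $\pi_{T}(x\gamma,\cdot)$ is the singleton $\{\mathfrak{s}_{k}(x\gamma)\}$ and the image of $\pi_{T}(y\gamma,\cdot)$ is the singleton $\{\mathfrak{s}_{k}(y\gamma)\}$. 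So the conclusion ``same image'' is equivalent to the single equation $\mathfrak{s}_{k}(x\gamma)=\mathfrak{s}_{k}(y\gamma)$, and this is what I would prove.

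To prove that equation I would set $p:=\pi_{T}(x,p_{1})=\pi_{T}(y,p_{2})$, the common target state guaranteed by the hypothesis (this is the role of the state $p$ named in the statement). Then I would evaluate each forced state at a convenient starting point using the composition law for $\pi_{T}$ on concatenated strings. Starting at $p_{1}$ gives
\[
\mathfrak{s}_{k}(x\gamma)=\pi_{T}(x\gamma,p_{1})=\pi_{T}\bigl(\gamma,\pi_{T}(x,p_{1})\bigr)=\pi_{T}(\gamma,p),
\]
while starting at $p_{2}$ gives, in the same way,
\[
\mathfrak{s}_{k}(y\gamma)=\pi_{T}(y\gamma,p_{2})=\pi_{T}\bigl(\gamma,\pi_{T}(y,p_{2})\bigr)=\pi_{T}(\gamma,p).
\]
The two right-hand sides are literally the same state $\pi_{T}(\gamma,p)$, so $\mathfrak{s}_{k}(x\gamma)=\mathfrak{s}_{k}(y\gamma)$, which is exactly the required equality of images.

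There is essentially no obstacle here; as the paper remarks, the result follows straight-forwardly from the definitions, and the only care needed is the bookkeeping that $|x\gamma|=|y\gamma|=k$ so that synchronization at level $k$ applies to collapse both maps to constants. I would note in passing that the argument uses only that $T$ is synchronizing at level $k$, not that $k$ is the \emph{minimal} such level; the minimality is carried in the statement for the sake of the later application rather than being needed in the proof.
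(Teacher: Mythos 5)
Your proof is correct: both maps collapse to constant maps by synchronization at level $k$ (since $|x\gamma|=|y\gamma|=k$), and the two forced states coincide because each can be computed through the common state $p=\pi_{T}(x,p_{1})=\pi_{T}(y,p_{2})$, giving $\mathfrak{s}_{k}(x\gamma)=\pi_{T}(\gamma,p)=\mathfrak{s}_{k}(y\gamma)$. The paper prints no proof of its own (it imports the lemma as following straightforwardly from the definitions), and your argument --- including the correct observation that minimality of $k$ plays no role in the proof itself --- is exactly the intended one.
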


\begin{lemma}\label{lem:forcingsynch1}
	Let $T \in \TOn{2}$.  Suppose there are states $p_1, p_2, q \in Q_{T}$, and words $\mu_1, \mu_2$, such that $q$ is a homeomorphism state, and, for $a \in \{1,2\}$, $\pi_{T}(\mu_a, p_a) = q$ and $\lambda_{T}(\mu_a, p_a)$ has $a-1$ as a suffix. Then, in fact  $T$ has size one.
\end{lemma}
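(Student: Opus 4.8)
The plan is to reduce the statement to the two--letters--one--state phenomenon of Lemma~\ref{lem:twolettersonestate}, but applied to the \emph{inverse} transducer $T^{-1}$ rather than to $T$ itself. The hypothesis is phrased in terms of the \emph{output} suffixes of the words $\lambda_T(\mu_a,p_a)$, whereas Lemma~\ref{lem:twolettersonestate} is about \emph{input} letters feeding a common state; passing to $T^{-1}$ interchanges inputs and outputs and bridges the two. Since $T$ lies in $\On$ it is invertible, with inverse $T^{-1}\in\On$ (constructed as in Construction~\ref{construction:inverse}), which is therefore again minimal, core, strongly synchronizing and, being minimal, strongly connected. Moreover, as $q$ is a homeomorphism state of $T$, the map $T_q$ is a self--homeomorphism of $X_2^{\omega}$, so there is a homeomorphism state $\widehat q$ of $T^{-1}$ with $T^{-1}_{\widehat q}=(T_q)^{-1}$. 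This $\widehat q$ is the state into which both inverse computations will be shown to feed.

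The key point is that $q$ being a homeomorphism state makes the output words into full cylinders: for each $a$, using that $T_q$ is onto, $T_{p_a}$ maps $U_{\mu_a}$ onto the \emph{full} cylinder $U_{\lambda_T(\mu_a,p_a)}$ by the rule $\mu_a z\mapsto \lambda_T(\mu_a,p_a)\,T_q(z)$. Consequently, reading the word $\lambda_T(\mu_a,p_a)$ in $T^{-1}$ undoes this prefix and leaves $T^{-1}$ in the state $\widehat q$ computing $(T_q)^{-1}$. Writing $\lambda_T(\mu_1,p_1)=\alpha'0$ and $\lambda_T(\mu_2,p_2)=\beta'1$, and letting $\widehat s_1,\widehat s_2$ be the states of $T^{-1}$ reached after reading $\alpha'$, resp.\ $\beta'$, along these computations, I obtain
\[
\pi_{T^{-1}}(0,\widehat s_1)=\widehat q=\pi_{T^{-1}}(1,\widehat s_2).
\]
Thus the two \emph{distinct} letters $0$ and $1$ feed into the common state $\widehat q$ of $T^{-1}$. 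Here it is essential that the alphabet is binary, so that the two possible output suffixes $0$ and $1$ account for \emph{all} letters; this is why the result is special to $\TOn{2}$.

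Now I would invoke Lemma~\ref{lem:twolettersonestate} for $T^{-1}$ with its minimal synchronizing level $l$: for every word $\gamma$ of length $l-1$ the maps $\pi_{T^{-1}}(0\gamma,\,\cdot)$ and $\pi_{T^{-1}}(1\gamma,\,\cdot)$ have the same image, and since $T^{-1}$ is synchronizing at level $l$ each of these maps is constant, so $\mathfrak{s}_l(0\gamma)=\mathfrak{s}_l(1\gamma)$. Because $0$ and $1$ exhaust the alphabet and every state of the core has an incoming edge, writing an arbitrary state as $p=\pi_{T^{-1}}(c,s)$ gives $\pi_{T^{-1}}(\gamma,p)=\mathfrak{s}_l(c\gamma)$ independently of $p$; that is, $T^{-1}$ is synchronizing at level $l-1$. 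By minimality of $l$ this is impossible unless $l=1$, in which case the equality $\mathfrak{s}_1(0)=\mathfrak{s}_1(1)$ together with the fact that every state has an incoming edge forces all states of $T^{-1}$ to coincide. Hence $T^{-1}$, and therefore $T$, has a single state.

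The main obstacle is the middle step: making rigorous the claim that reading the output word $\lambda_T(\mu_a,p_a)$ in $T^{-1}$ terminates precisely in the homeomorphism state $\widehat q$. This requires care with states of incomplete response in the inverse (the prefix $\mu_a$ that $T^{-1}$ emits) and with the exact state correspondence furnished by Construction~\ref{construction:inverse}; the homeomorphism hypothesis on $q$ is exactly what makes $U_{\lambda_T(\mu_a,p_a)}$ a full cylinder lying inside $\im(p_a)$, and hence guarantees a single well--defined common target $\widehat q$ for both computations. I note finally that this argument never uses orientation--preservation, so no reduction by conjugation with $\R$ is needed; the binary alphabet, however, is indispensable.
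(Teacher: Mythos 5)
Your proposal is correct and follows essentially the same route as the paper: pass to the inverse, observe that the two output words ending in $0$ and in $1$ become input words of $T^{-1}$ reading into the common state corresponding to the homeomorphism state $q$, and then invoke Lemma~\ref{lem:twolettersonestate} to force the minimal synchronizing level of $T^{-1}$ down to $0$. The step you flag as the main obstacle is closed in the paper in one line: for an arbitrary state $(w,t)$ of the transducer $T'$ of Construction~\ref{construction:inverse}, choose $\delta_a$ with $\pi_T(\delta_a,t)=p_a$ and $\lambda_T(\delta_a,t)$ having $w$ as a prefix; since $q$ is a homeomorphism state, $(\lambda_T(\delta_a,t)\lambda_T(\mu_a,p_a))L_t=\delta_a\mu_a$, so reading $\lambda_T(\delta_a,t)\lambda_T(\mu_a,p_a)-w$ (which still ends in $a-1$) from $(w,t)$ lands in $(\ew,q)$, independently of $a$.
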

\begin{proof}
	Let $w \in X_{2}^{\ast}$ and $t \in Q_{T}$ be such that $(w, t)$ is a state of $T'$. For $a \in \{1,2\}$, let  $\delta_a \in X_{2}^{\ast}$ be a word such that $\pi_{T}(\delta_a, t) = p_a$ and  $\lambda_{T}(\delta_a, t)$ has prefix $w$. Then we have $(\lambda_{T}(\delta_a, t)\lambda_{T}(\mu_a,p_a))L_{t} = \delta_a \mu_a$ since $q$ is a homeomorphism state. This implies that $\pi_{T'}(\lambda_{T}(\delta_a, t) \lambda_{T}(\mu_a,p_a) - w, (w,t)) = (\ew, q)$ which is independent of $a$. Thus in $T'$, and so in $T^{-1}$, it is possible to read a word ending in $0$ and a word ending in $1$ to the same location.
	
	The statement of the lemma is now a consequence of Lemma~\ref{lem:twolettersonestate}, since $T^{-1}$ must have minimal synchronizing level $0$, and so $|T| = |T^{-1}| = 1$.
\end{proof}

\begin{lemma}\label{lem:restrictionsonleftend}
	Let $T \in \TOn{2}$ and let $q_1, q_2 \in Q_T$ be distinct states. Suppose there are  words $\mu, \nu_0, \nu_1, \psi, \bar{\psi} \in X_{2}^{\ast}$ and $j_1, j_2 \in \N$ such that
	\begin{enumerate}[label=(\roman*)]
		\item  $\mu \notin \{1\}^{\ast}$, $\pi_{T}(\mu, q_1) = \pi_{T}(\mu, q_2)$,
		\item  $1$ is not a suffix of $\psi$, $0$ is not a suffix of $\bar{\psi}$, $\psi 1^{\omega}$ is the right-most point of $\pi_{T}(\mu, q_1)$, $\psi 1^{\omega}$ and $\bar{\psi}0^{\omega}$ correspond to the same point on the unit circle, and,
		\item for $a \in \{1,2\}$, $\lambda_{T}(\mu1^{\omega}, q_a) = \nu_a0^{j_a}\psi 1^{\omega}$. 
	\end{enumerate}
	     If $\mu'$ is the smallest element in the shortlex ordering on  $X_{2}^{\ast}$ strictly bigger than $\mu$ and  satisfying $\pi_{T}(\mu', q_1) = \pi_{T}(\mu', q_2)$, then the following things hold:
	\begin{enumerate}
		\item $\lambda_{T}(\mu', q_1)$ is a proper prefix of $\nu_1 0^{j_1}\bar{\psi}$ if and only if $\lambda_{T}(\mu', q_2)$ is a proper prefix of $\nu_2 0^{j_2}\bar{\psi}$. In this case $\nu_1 0^{j_1}\bar{\psi} - \lambda_{T}(\mu', q_1) = \nu_2 0^{j_2}\bar{\psi} - \lambda_{T}(\mu', q_2)$. \label{lem:firststatement}
		\item   $\nu_1 0^{j_1}\bar{\psi}$ is prefix of  $\lambda_{T}(\mu', q_1)$ if and only if $\nu_2 0^{j_2}\bar{\psi} $ is a prefix of $\lambda_{T}(\mu', q_2)$. In this case $\lambda_{T}(\mu', q_1) - \nu_1 0^{j_1}\bar{\psi}, \lambda_{T}(\mu', q_2) - \nu_2 0^{j_2}\bar{\psi}  \in \{0\}^{*}$ and $U_{0} \subset \im(\pi_{T}(\mu',q_1)) = \im(\pi_{T}(\mu', q_2))$. \label{lem:secondstatement}
	\end{enumerate}
	 
\end{lemma}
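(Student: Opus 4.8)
The plan is to argue geometrically, viewing each initial transducer $T_{q_a}$ ($a\in\{1,2\}$) as an orientation-preserving injection of the circle $\xno/\simeqI$ onto its clopen image, and to exploit that after reading $\mu$ (respectively $\mu'$) both transducers occupy a common state $p:=\pi_{T}(\mu,q_1)=\pi_{T}(\mu,q_2)$ (respectively $p':=\pi_{T}(\mu',q_1)=\pi_{T}(\mu',q_2)$). First I would record the elementary consequences of the hypotheses. Since $T$ is minimal it has no states of incomplete response, so $\lambda_{T}(\mu 1^{\omega},q_a)=\lambda_{T}(\mu,q_a)\lambda_{T}(1^{\omega},p)$; as $\psi 1^{\omega}$ is the right-most point of $\im(p)$, the hypotheses are thus arranged so that $\lambda_{T}(\mu,q_a)=\nu_a0^{j_a}$. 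Consequently the right-most point of $T_{q_a}(U_{\mu})=\lambda_{T}(\mu,q_a)\cdot\im(p)$ is the circle point $\nu_a0^{j_a}\psi1^{\omega}\simeqI\nu_a0^{j_a}\bar\psi0^{\omega}$, using $\psi1^{\omega}\simeqI\bar\psi0^{\omega}$.

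The heart of the argument is to show that $U_{\mu'}$ is the cylinder immediately to the right of $U_{\mu}$; precisely, that $\mu 1^{\omega}\simeqI\mu'0^{\omega}$. Writing $\mu=\gamma 0 1^{s}$ (possible as $\mu\notin\{1\}^{\ast}$), its immediate lexicographic successor of the same length is $\mu^{+}=\gamma 1 0^{s}$, and there is no word strictly short-lex between $\mu$ and $\mu^{+}$; since every strictly longer word is short-lex larger than $\mu^{+}$, it follows that $\mu'=\mu^{+}$ as soon as $\pi_{T}(\mu^{+},q_1)=\pi_{T}(\mu^{+},q_2)$, and then $\mu'0^{\omega}=\gamma 1 0^{\omega}\simeqI\gamma 0 1^{\omega}=\mu 1^{\omega}$ as required. \textbf{Proving that $\mu^{+}$ synchronizes $q_1$ and $q_2$ is the main obstacle.} I would approach it through the set $S=\{w:\pi_{T}(w,q_1)=\pi_{T}(w,q_2)\}$, which contains every word of length at least the synchronizing level and is closed under extension, so its complement is a finite prefix-closed set. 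A failure $\mu^{+}\notin S$ would interpose a region where $q_1$ and $q_2$ diverge immediately to the right of $U_{\mu}$; combined with order-preservation and the output hypotheses, I expect to rule this out, or else reduce to the one-state transducer, by the mechanism of Lemmas~\ref{lem:twolettersonestate} and~\ref{lem:forcingsynch1}, where reading words ending in different letters to a common state collapses $T$.

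Granting $\mu 1^{\omega}\simeqI\mu'0^{\omega}$, I would apply $T_{q_a}$, which respects $\simeqI$ because it is the local action of a circle homeomorphism, to obtain the single identity
\[
\lambda_{T}(\mu',q_a)\cdot m \;=\; \nu_a 0^{j_a}\bar\psi 0^{\omega}\qquad(a\in\{1,2\}),
\]
where $m$ is the left-most point of $\im(p')$ and $\lambda_{T}(\mu',q_a)\cdot m$ is the left-most point of $T_{q_a}(U_{\mu'})=\lambda_{T}(\mu',q_a)\cdot\im(p')$. The decisive feature is that $m$ depends only on the common state $p'$, hence is identical for $a=1$ and $a=2$.

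Finally I would read off the two alternatives according to the form of $m$, which by the Remark following Lemma~\ref{Lemma:imageofstateclosedsubinterval} is the left endpoint of a closed dyadic interval with $\sfrac{1}{2}$ in its interior. If $m=\zeta0^{\omega}$ with $\zeta\neq\ew$ (so $\zeta$ ends in $1$), equating the two eventually-zero sequences gives $\lambda_{T}(\mu',q_a)\,\zeta=\nu_a 0^{j_a}\bar\psi$, so $\lambda_{T}(\mu',q_a)$ is a proper prefix of $\nu_a 0^{j_a}\bar\psi$ with complement $\zeta$; since $\zeta$ is independent of $a$, this yields statement~\ref{lem:firststatement} and the asserted equality of the two differences. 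If instead $m=0^{\omega}$, then $\nu_a 0^{j_a}\bar\psi$ is a prefix of $\lambda_{T}(\mu',q_a)$ with complement in $\{0\}^{\ast}$, and $\im(p')$, being a dyadic interval with left endpoint $0^{\omega}$ and $\sfrac12$ in its interior, contains $U_{0}$; this is statement~\ref{lem:secondstatement}. Because the case distinction is governed solely by $p'$, both ``if and only if'' statements follow at once. I would dispatch the degenerate case $\bar\psi=\ew$ (where the right-most point of $\im(p)$ is the circle point $0^{\omega}=1^{\omega}$, forcing $\psi=\ew$) separately, since there the reference word carries no trailing $1$.
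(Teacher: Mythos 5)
Your overall strategy is the paper's: pass to the common states $p=\pi_{T}(\mu,q_a)$ and $p'=\pi_{T}(\mu',q_a)$, use that each $h_{q_a}$ is an order-preserving injection compatible with $\simeqI$ whose image is a closed dyadic interval, derive the single identity $\lambda_{T}(\mu'0^{\omega},q_a)=\nu_a0^{j_a}\bar{\psi}0^{\omega}$, and then split into the two statements according to whether the left-most point of $\im(p')$ is $0^{\omega}$ or a point $\zeta0^{\omega}$ with $\zeta$ ending in $1$. Your endgame is essentially identical to the paper's proof, which likewise observes that the residues $\nu_a0^{j_a}\bar{\psi}-\lambda_{T}(\mu',q_a)$ (resp.\ $\lambda_{T}(\mu',q_a)-\nu_a0^{j_a}\bar{\psi}$) are forced to describe the left-most point of the \emph{common} image $\im(p')$ and hence cannot depend on $a$; your packaging via the single quantity $m$ is a clean way to get both ``if and only if''s at once.

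The genuine gap is the step you yourself flag as the main obstacle and then do not close: the identity $\lambda_{T}(\mu'0^{\omega},q_a)=\nu_a0^{j_a}\bar{\psi}0^{\omega}$, which you correctly observe amounts to $\mu1^{\omega}\simeqI\mu'0^{\omega}$. You reduce this to showing that the immediate successor $\mu^{+}=\gamma10^{s}$ of $\mu=\gamma01^{s}$ already satisfies $\pi_{T}(\mu^{+},q_1)=\pi_{T}(\mu^{+},q_2)$, and then only say you ``expect'' to rule out the alternative via the collapsing mechanism of Lemmas~\ref{lem:twolettersonestate} and~\ref{lem:forcingsynch1}; that is a hope, not an argument. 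Note also that the claim you would need is delicate: the set of words synchronizing $q_1$ and $q_2$ is merely the complement of a finite prefix-closed set, so a priori the shortlex-least synchronizing word above $\mu$ could fail to lie in $\mu^{+}\{0\}^{\ast}$ (it could, for instance, be a later word of the same length), in which case $\mu'0^{\omega}\not\simeqI\mu1^{\omega}$ and the displayed identity would fail outright; and Lemmas~\ref{lem:twolettersonestate} and~\ref{lem:forcingsynch1} concern reading words with different final letters into a common state, which is not obviously the right tool for controlling which words fail to synchronize $q_1$ and $q_2$. The paper's own proof extracts the identity directly (if tersely) from the shortlex-minimality of $\mu'$ together with order-preservation and the interval structure of the images; as written, your proposal establishes the conclusion only conditionally on this unproved adjacency.
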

\begin{proof}
	First notice that, since  $q_1$ and $q_2$ are distinct, $\mu \ne \ew$ and, as $\mu$ is not a string of $1$'s, $\mu'$ exists. For $a \in \{1,2\}$, the condition on $\mu'$ implies that $\lambda_{T}(\mu'0^{\omega}, q_a) = \nu_a 0^{j_a}\bar{\psi} 0^{\omega}$. Also note that $\psi$ is the empty word if and only $\bar{\psi}$ is the empty word, and whenever $\psi$ is not  the empty word, $\psi$ and $\bar{\psi}$ differ only in their last letter.

	Set $p = \pi_T(\mu', q_a)$. 
	
	If $\lambda_{T}(\mu', q_1)$ contains $\nu_1 0^{j_1}\bar{\psi}$ as a prefix, then it is clear that $U_0 \subset \im(p)$. Thus if $\lambda_{T}(\mu', q_2)$ is a proper prefix of $\nu_2 0^{j_2}\bar{\psi}$, we have a contradiction of the injectivity of $q_2$. Therefore it follows that there are $i_0, i_1 \in \N$ such that, for $a \in \{1,2\}$, $\lambda_{T}(\mu', q_a) = \nu_a 0^{j_a}\bar{\psi}0^{i_a}$. Swapping $q_1$ with $q_2$ gives statement \ref{lem:secondstatement}.
	
	Thus suppose that $\lambda_{T}(\mu', q_1)$ is a proper prefix of $\nu_1 0^{j_1}\bar{\psi}$. By the statement \ref{lem:secondstatement}, which is proven above, it must be the case that $\lambda_{T}(\mu', q_0)$ is also  a proper prefix of $\nu_0 0^{j_0}\bar{\psi}$.  Let $\rho_1 = \nu_1 0^{j_1}\bar{\psi} - \lambda_{T}(\mu', q_1)$.  Then  it is clear that $\rho_1 0^{\omega}$ is the left-most point of $\im(p)$.   However, similarly defining $\rho_0 = \nu_0 0^{j_0}\bar{\psi} - \lambda_{T}(\mu', q_1)$, it is clear that $\rho_0 0^{\omega}$ is also the left-most point of $\im(p)$. Thus we must have that $\rho_0 = \rho_1$ since they both end in $1$. This proves statement \ref{lem:firststatement}.

\end{proof}

\begin{Theorem}
	Let $T \in \TOn{2}$, then $|T| = 1$. 
\end{Theorem}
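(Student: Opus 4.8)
The plan is to reduce to the orientation-preserving case and then, assuming $|T| \ge 2$, to manufacture the hypotheses of Lemma~\ref{lem:forcingsynch1}, whose conclusion $|T| = 1$ is the desired contradiction. First I would dispose of the orientation-reversing case. If $T$ is orientation-reversing then, by the Remark on conjugation by $\R$, the product $\R T$ is orientation-preserving, and since $\R$ has a single state the number of states is unchanged, $|\R T| = |T|$. Granting the orientation-preserving case, $\R T$ has a single state; by Lemma~\ref{Lemma:orientationppreservingzerooneloopstatesactasidentityonloops} that state loops on each letter and outputs it unchanged, so $\R T = \id$ and hence $T = \R$, again of size one. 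Thus it suffices to treat orientation-preserving $T$, and for these I would argue by contradiction, assuming $|T| \ge 2$.

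Next I would produce a \emph{collapsing pair}. Let $k$ be the minimal synchronizing level of the underlying automaton; since $|T| \ge 2$ we have $k \ge 1$ (if $k = 0$ then $\pi_T(\ew,\cdot)$ is constant, so a single state is forced and $|T| = 1$). Failure of synchronization at level $k-1$ yields a word $w$ of length $k-1$ and states $s, s'$ with $q_1 := \pi_T(w, s) \ne \pi_T(w, s') =: q_2$; as every length-$k$ word forces a state, we get for each $a \in \xt$ that $\pi_T(a, q_1) = \pi_T(wa, s) = \pi_T(wa, s') = \pi_T(a, q_2) =: p_a$. Hence $q_1 \ne q_2$ collapse on both letters, and therefore on every non-empty word. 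Applying Lemma~\ref{Lemma:characterisingcollapsoftwostates} to this pair, and conjugating by $\R$ if necessary to interchange $0$ and $1$, I may assume we are in case (a) or case (c).

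The heart of the argument is then to exhibit a homeomorphism state reachable by two paths whose outputs end in different letters, for then Lemma~\ref{lem:forcingsynch1} applies. A naive attempt using the two branches of the collapsing pair fails: once $q_1$ and $q_2$ merge, the two readings $\lambda_T(v, q_1)$ and $\lambda_T(v, q_2)$ acquire a common suffix (by Lemma~\ref{Lemma:characterisingcollapsoftwostates} they differ only in a trailing block of $0$'s) and hence always terminate in the same letter. Instead I would iterate Lemma~\ref{lem:restrictionsonleftend} along the consecutive shortlex merge words $\mu < \mu' < \cdots$, at each stage recording the output data $\lambda_T(\mu 1^{\omega}, q_a) = \nu_a 0^{j_a}\psi 1^{\omega}$ and, via Lemma~\ref{Lemma:imageofstateclosedsubinterval}, the endpoints of the dyadic interval $\im(\pi_T(\mu, q_a))$. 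Statement~\ref{lem:secondstatement} of Lemma~\ref{lem:restrictionsonleftend} records precisely when $U_0 \subseteq \im(\pi_T(\mu', q_1))$, that is, when the left endpoint reaches $0$; conjugating by $\R$ gives the mirror statement, recording when $U_1 \subseteq \im$, i.e.\ when the right endpoint reaches $1$. Since every state image straddles $\sfrac{1}{2}$, a state with both inclusions has $\im = \xt^{\omega}$ and is a homeomorphism state. The aim is to show the iteration forces such a homeomorphism state $q$ approached along two distinct paths, the output accumulated on the path that drives the left endpoint to $0$ ending in $0$ and that on the path driving the right endpoint to $1$ ending in $1$, so that Lemma~\ref{lem:forcingsynch1} applies with these two paths and yields $|T| = 1$, the required contradiction.

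The step I expect to be the main obstacle is exactly this last one: proving that the iteration through Lemma~\ref{lem:restrictionsonleftend} and its $\R$-mirror must actually reach a \emph{common} state of full image $\xt^{\omega}$, and that the two outputs accumulated along the way terminate in opposite letters. This demands careful control of the interaction between the trailing-digit data $\nu_a 0^{j_a}$, the seam condition $\psi 1^{\omega} \simeqI \bar{\psi} 0^{\omega}$, and the injectivity and order-preservation of the states, carried out separately for cases (a) and (c) of Lemma~\ref{Lemma:characterisingcollapsoftwostates}. The observation that merged branches share a common tail, noted above, is what shows a direct approach to be insufficient and makes this bookkeeping unavoidable.
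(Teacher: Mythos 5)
Your opening moves match the paper's: reduce to the orientation-preserving case, extract from the failure of synchronization at level $k-1$ a collapsing pair $q_1\ne q_2$ with $\pi_T(a,q_1)=\pi_T(a,q_2)$ for $a\in\xt$, apply Lemma~\ref{Lemma:characterisingcollapsoftwostates}, and then iterate Lemma~\ref{lem:restrictionsonleftend} along consecutive shortlex merge words. But the step you flag as ``the main obstacle'' is not a technicality to be filled in later; it is the proof, and the closing device you propose for it is not the one that actually works. The paper invokes Lemma~\ref{lem:forcingsynch1} only in the degenerate subcases where the merge word lies in $\{1\}^{+}$ (so that the relevant state is $q_{l(1)}$ and alternative~\ref{alternativethree} of Lemma~\ref{Lemma:characterisingcollapsoftwostates} makes it a homeomorphism state). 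In the general case it does \emph{not} produce a homeomorphism state of $T$ --- nothing guarantees, inside the contradiction hypothesis $|T|>1$, that any state of $T$ has image all of $\xt^{\omega}$, and your outline gives no mechanism that would force the iteration to reach one. Instead, the paper's induction (the claim about $\mu'_a10^{k'_a-d_a}$) shows that the two local inverse maps $h_{q_{l(0)}}^{-1}$ and $h_{q_{l(1)}}^{-1}$ agree on a common dyadic interval, hence that two states $(\nu_0,R_0)$ and $(\nu_1,R_1)$ of the inverse-construction transducer $T'$ are $\omega$-equivalent while being reached by words ending in $0$ and in $1$ respectively (this is where the discrepancy $k'_0\ne k'_1$ is cashed in); Lemma~\ref{lem:twolettersonestate} applied to $T^{-1}$ then contradicts minimality of its synchronizing level. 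Your plan terminates the argument one step too early, at a target that may not exist.

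A second, related gap: your collapsing pair is generic, whereas the paper anchors its pair at the two loop states, taking $q_a=\pi_T(w,q_{l(a)})$ for $w$ the lexicographically largest word after which the images of $q_{l(0)}$ and $q_{l(1)}$ first merge (and normalises by $\R$ so that the outputs on reading $1$ genuinely differ there). This choice is what supplies the containment $U_{\mu'_11}\subseteq\im(q_{l(1)})$, which in turn guarantees the existence of the terminal word $t_{\mathrm{max}}$ bounding the induction, and what allows the accumulated outputs to be compared against the fixed reference prefixes $\mu'_a10^{k'_a-d_a}$ throughout. With an arbitrary collapsing pair the bookkeeping you describe --- controlling the trailing blocks $\nu_a0^{j_a}$ against the seam words $\psi,\bar\psi$ across infinitely many merge words --- has no a priori reason to terminate or to produce paths whose outputs end in opposite letters. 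So while your skeleton is the right one, the proposal is missing the induction that constitutes the substance of the argument, and the intended final appeal to Lemma~\ref{lem:forcingsynch1} would have to be replaced by the $\omega$-equivalence argument in $T'$.
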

\begin{proof}
	It suffices to prove that if $T \in \TOn{2}$ is an orientation preserving element, then $|T| = 1$.
	
	Suppose for a contradiction that $|T| > 1$. Let $k, j \in \N$ be the minimal synchronizing levels of $T$ and $T^{-1}$ respectively, and let $\T{W}$ be the set of those elements $w \in X_{2}^{\ast}$, of length at most  $k$ which satisfy the following requirements:
	
	\begin{itemize}
		\item $\pi_{T}(w, q_{l(0)}) \ne  \pi_{T}(w, q_{l(1)})$ and,
		\item for any $x \in X_{2}$, $\pi_{T}(wx, q_{l(0)}) =  \pi_{T}(wx, q_{l(1)})$.
	\end{itemize}

By conjugating $T$ by $\R$ if necessary, since $q_{l(0)}$ and $q_{l(1)}$ are distinct states, we may further assume that there is a word $w \in \T{W}$, for which $\lambda_{T}(1, \pi_{T}(w,q_{l(0)})) \ne \lambda_{T}(1, \pi_{T}(w,q_{l(1)}))$. Note that the requirements on $\T{W}$ guarantee that all its elements are pairwise incomparable in the prefix ordering. Fix $w$ in $\T{W}$ to be the largest element in the lexicographic  ordering of $\T{W}$. We now argue that $|T| = 1$.

For $a \in \{0,1\}$, let  $\mu_a = \lambda_{T}(w, q_{l(a)})$, $q_a = \pi_{T}(w, q_{l(a)})$ and $p_a = \pi_{T}(a, q_a)$. By Lemma~\ref{Lemma:characterisingcollapsoftwostates}, for $a \in \{0,1\}$, there are  $k_a \in \N$ and a word $\Xi \in X_{2}^{\ast}$   such that $\lambda_{T}(1, q_{a}) = \Xi10^{k_a}$. By assumption  $k_0 \ne k_1$.  Since $U_{1} \subset \im(q_{l(1)})$, it follows that $U_{\mu_1 1} \subset  \im(q_{l(1)})$.

Observe that  $\lambda_{T}(w11^{\omega},q_{l(a)}) = \mu_{a}\Xi10^{k_a}\lambda_{T}(1^{\omega}, p_1)$. Let $\psi \in X_{2}^{\ast}$, with $1$ not a suffix of $\psi$ be such that $\psi 1^{\omega}$ is the right-most point of $\im(p_1)$. Define $\bar{\psi}$ such that if $\psi$ is not empty, $\psi$ and $\bar{\psi}$ differ only in their last letter, otherwise $\bar{\psi} = \ew$. 

First suppose that $w \in \{1\}^{+}$. In this case, we have have that $p_1 = q_{l(1)}$, $q_1 = q_{l(1)}$, and, by Lemma~\ref{Lemma:characterisingcollapsoftwostates}, $U_0 \subset \im(p_1)$ and so $p_1$ is a homeomorphism state. This means that we are in alternative ~\ref{alternativethree} of Lemma~\ref{Lemma:characterisingcollapsoftwostates}, and so $\Xi = \ew$. Thus, as $\lambda_{T}(1,  q_{l(1)}) = 1$, it follows that $k_1 = 0$ and so, by assumption, $k_0 >0$. As $\lambda_{T}(1, q_0) = 10^{k_0}$, it follows that $q_0, q_{l(1)},1$ satisfy the hypothesis of Lemma~\ref{lem:forcingsynch1}, and so,  $|T| = 1$.

Suppose, now that $w \notin \{1\}^{+}$, we once again deduce the conclusion of Lemma~\ref{lem:forcingsynch1}.
	
Let $v$ be the smallest word in the shortlex ordering on $X_{2}^{\ast}$ bigger than  $w$ and satisfying $\pi_{T}(v, q_{l(0)}) = \pi_{T}(v, q_{l(1)})$. Then, by Lemma~\ref{lem:restrictionsonleftend}, it is the case that either $ \mu_0\Xi10^{k_0}\bar{\psi} - \lambda_{T}(v, q_{l(0)}) = \mu_1\Xi10^{k_1}\bar{\psi} - \lambda_{T}(v, q_{l(1)})$ or there are  $j_0, j_1 \in \N$ such that, for $a \in \{0,1\}$,  $\lambda_{T}(v, q_{l(a)})= \mu_a\Xi10^{k_a}\bar{\psi}0^{j_a}$. It therefore follows that one of the following things holds: $ \mu_0\Xi10^{k_0}\bar{\psi} - \lambda_{T}(v, q_{l(0)}) = \mu_1\Xi10^{k_1}\bar{\psi} - \lambda_{T}(v, q_{l(1)})$, $\lambda_{T}(v, q_{l(1)}) - \mu_0\Xi10^{k_0}\bar{\psi}= \lambda_{T}(v, q_{l(1)}) - \mu_0\Xi10^{k_1}\bar{\psi}$, or $\lambda_{T}(v, q_{l(0)}) - \mu_0\Xi10^{k_0}\bar{\psi}$ and  
$ \lambda_{T}(v, q_{l(1)}) - \mu_0\Xi10^{k_1}\bar{\psi}$ differ by a power of $0$. 

Let $p' = \pi_{T}(v, q_{l(a)})$, $a \in \{1,2\}$ and let $\phi$ be such that $\phi$ has no suffix equal to $1$ an $\phi1^{\omega}$ is the rightmost point of $p'$. Then defining $\bar{\phi}$ analogously to $\bar{\psi}$, it follows that, if $v \notin \{1\}^{+}$, then, $v, q_{l(0)}, q_{l(0)}, \lambda_{T}(v, q_{l(0)}), \lambda_{T}(v, q_{l(1)}), \phi, \bar{\phi}$, satisfy the hypothesis of Lemma~\ref{lem:restrictionsonleftend}.

Therefore, by induction and since  $U_{\mu_11} \subset \im(q_{l(1)})$ there is a  largest element $v$ in the shortlex ordering of $X_{2}^{\ast}$, such that  all of the following hold:
\begin{itemize}
	\item $\pi_{T}(v, q_{l(1)}) = \pi_{T}(v, q_{l(0)})$,
	
	\item $U_{0} \subset \im(\pi_{T}(v, q_{l(a)}))$, $a \in \{0,1\}$,
	
	\item for any proper prefix $\nu$ of $v$, $\pi_{T}(\nu, q_{l(1)}) \ne \pi_{T}(\nu, q_{l(0)})$, 
	\item $\lambda_{T}(v1^{\omega}, q_{l(1)}) \leqlex \mu_1\Xi 1^\omega$, and,
	\item there is a $\phi \in X_{2}^{\ast}$ without $1$ as a suffix, such that  
	if $v'$ is the largest element of $X_{2}^{\ast}$ which is strictly smaller than $v$ in the shortlex ordering, for which  $\pi_{T}(v',q_{l(0)}) = \pi_{T}(v', q_{l(1)})$, no proper prefix $\nu'$ of $v'$ satisfies $\pi_{T}(\nu',q_{l(0)}) = \pi_{T}(\nu', q_{l(1)})$, and $\phi 1^{\omega}$ is the largest element of $\im(\pi_{T}(v', q_a))$, then, defining $\bar{\phi}$ as before, the following inequality is well-defined and holds $\lambda_{T}(v, q_{l(1)}) -\lambda_{T}(v',q_{l(1)})\bar{\phi}  \ne \lambda_{T}(v, q_{l(0)}) - \lambda_{T}(v',q_{l(0)})\bar{\phi}$.
\end{itemize}

Note that since, for $a \in \{0,1\}$, as $U_{0} \subset \im(p_1)$, $\lambda_{T}(w01^{\omega}, q_{l(a)}) = \mu_a\Xi01^{\omega}$, it is possible that $v = w1$.

Let $a \in \{0,1\}$, $k'_{a} \in \N$, $\mu'_{a}$ and $p'$, be such that $\pi_{T}(v, q_{l(a)}) = p'$ and $\lambda_{T}(v, q_{l(a)}) = \mu'_a10^{k'_a}$. By assumption  we have that $k'_1 \ne k'_0$. Let $\phi, \bar{\phi} \in X_{2}^{\ast}$ be such that either $\phi = \bar{\phi} = \ew$ or $\phi$ and $\bar{\phi}$ differ only in their last digit, and $\phi1^{\omega}$ is the right-most point of $\im(p')$. Let $d = \mid k'_1 - k'_0 \mid $, noting, that by properties of $v$, $d >0$. For $a \in \{0,1\}$ set $d_a = k_a$ if $k'_a \le k'_{a+1 \pmod{1}}$, otherwise set  $d_a = k'_{a+1 \pmod{1}}$. Note that $d_0 = d_1 = \min\{k'_1, k'_0\}$.

If $v \in \{1\}^{+}$, then, we may once more apply Lemma~\ref{lem:forcingsynch1} to conclude that $|T| =1$, since, in this case $k_1 = 0$, forcing $k_0 >0$, $p' = q_{l(1)}$, and so $p'$ is a homeomorphism  state, since $U_0 \subset \im(p')$, thus $q_{l(0)}, q_{l(1)}, v, p'$ satisfy the required hypothesis. 

Thus we may assume that $v \not \in \{ 1\}^{+}$. We show, by induction, that in this case, $\mu'_a10^{k'_a - d_a} \subset \im(q_{l(a)})$, and, for $x \in X_{2}^{\omega}$, arbitrary, there is a $y \in X_{2}^{\omega}$, independent of $a$, such that $\lambda_{T}(y, q_{l(a)}) = \mu'_a10^{k'_a -d_a}x$. From this we deduce that $|T| = 1$.

Our induction is a repeated application of the following claim.  
\begin{claim}\label{claim:inductioncore}
	Let $t \in X_{2}^{+}$ be such that $t \notin \{1\}^{+}$, $v \leqlex t$, $ \pi_{T}(t, q_{l(1)}) = \pi_{T}(t, q_{l(0)})=: p''$, for $a \in \{0,1\}$, $\mu'_{a}10^{k'_a - d_a}$ is a prefix of $\lambda_{T}(t, q_{l(a)})$, and, $\lambda_{T}(t, q_{l(0)}) - \mu'_{0}10^{k'_0 - d_0} = \lambda_{T}(t, q_{l(1)}) - \mu'_{1}10^{k'_1 - d_1}$. Let $\phi', \bar{\phi}'$ be defined as usual and such that $\phi'1^{\omega}$ is the right-most point of $\im(p'')$. Suppose that $\lambda_{T}(t, q_{l(1)})\phi'1^{\omega} \lelex \mu'_1 10^{k'_1 - d_1}1^{\omega}$. 
	
	Let $t'$ be the smallest element, in the shortlex ordering on $X_{2}^{\ast}$, strictly bigger than $t$ and  satisfying $\pi_{T}(t', q_{l(1)}) = \pi_{T}(t', q_{l(0)})$. Then for any $x \in X_{2}^{\omega}$, such that, for $a \in \{0,1\}$, $\lambda_{T}(t1^{\omega}, q_{l(a)} ) \lelex \mu'_{a}10^{k'_a-d_a} x \leqlex \lambda_{T}(t'1^{\omega}, q_{l(a)})$, there is a $y \in X_{2}^{\omega}$, independent of $a$, such that for $a \in \{0,1\}$, $\lambda_{T}(y, q_{l(a)}) = \mu'_a 10^{k'_a - d_a}x$. Moreover, for $a \in  \{0,1\}$, $\mu'_{a}10^{k'_a-d_a}$ is a prefix of  $\lambda_{T}(t'1^{\omega}, q_{l(a)})$, and $\lambda_{T}(t', q_{l(0)}) - \mu'10^{k'_0 - d_0} = \lambda_{T}(t', q_{l(1)}) - \mu'_{0}10^{k'_1 - d_1}$.
\end{claim}
\begin{proof}
	By Lemma~\ref{lem:restrictionsonleftend}, $\lambda(t', q_{l(1)})$ is a proper prefix of $\lambda_{T}(t, q_{l(1)})\bar{\phi}'$ if and only if $\lambda_{T}(t', q_{l(0)})$ is a proper prefix of $\lambda_{T}(t, q_{l(1)})\bar{\phi}'$. In this case,  $\lambda_{T}(t, q_{l(1)})\bar{\phi}' - \lambda(t', q_{l(1)}) = \lambda_{T}(t, q_{l(0)})\bar{\phi}' - \lambda(t', q_{l(0)})$. Now, as $\max\{k'_1 - d_1, k'_0 - d_0\} > 0$, and $k'_1 \ne k'_0$, and since, for $a \in \{0,1\}$, $\mu'_a 10^{k'_a - d_a}$ is a prefix of $\lambda_{T}(t, q_{l(a)})$, the equalities, $\lambda_{T}(t, q_{l(0)}) - \mu'_{0}10^{k'_0 - d_0} = \lambda_{T}(t, q_{l(1)}) - \mu'_{1}10^{k'_1 - d_1}$ and $\lambda_{T}(t, q_{l(1)})\bar{\phi}' - \lambda(t', q_{l(1)}) = \lambda_{T}(t, q_{l(0)})\bar{\phi}' - \lambda(t', q_{l(0)})$ imply that, for $a \in \{0,1\}$, $\mu'_a 10^{k'_a - d_a}$ is a prefix of $\lambda_{T}(t', q_{l(a)})$. Moreover, by the same equalities, we also have  $\lambda_{T}(t', q_{l(0)}) - \mu'_{0}10^{k'_0 - d_0} = \lambda_{T}(t', q_{l(1)}) - \mu'_{1}10^{k'_1 - d_1}$. 
	
   If  $\lambda_{T}(t, q_{l(1)})\bar{\phi}'$ is a prefix of $\lambda_{T}(t', q_{l(1)})$, then by Lemma~\ref{lem:restrictionsonleftend}, there are $m_0, m_1 \in \N$ such that, for $a \in \{0,1\}$, $\lambda_{T}(t', q_{l(a)}) = \lambda_{T}(t, q_{l(a)})\bar{\phi}'0^{m_a}$. However, by assumption on $v$ and since $v \leqlex t \lelex t'$, we must have that $m_1 = m_0$. Thus, once more $\lambda_{T}(t', q_{l(0)}) - \mu'_{0}10^{k'_0 - d_0} = \lambda_{T}(t', q_{l(1)}) - \mu'_{1}10^{k'_1 - d_1}$, since $\lambda_{T}(t, q_{l(0)}) - \mu'_{0}10^{k'_0 - d_0} = \lambda_{T}(t, q_{l(1)}) - \mu'_{1}10^{k'_1 - d_1}$. 
   
   Now let $x \in X_{2}^{\omega}$ and, for $a \in \{0,1\}$, suppose that $\lambda_{T}(t, q_{l(a)})\phi'1^{\omega} \lelex \mu'_{a} 10^{k'_a-d_a}x \leqlex \lambda_{T}(t'1^{\omega}, q_{l(a)})$. Observe that $L_{q_{l(a)}}(\mu'_a10^{k'_a-d_a}x) = t'L_{p''}(\mu'_a10^{k'_a-d_a}x - \lambda_{T}(t', q_{l(a)})$, since the points  $\lambda_{T}(t, q_{l(a)})\phi'1^{\omega} $ and $\lambda_{T}(t'0^{\omega},q_{l(a)} )$ are equal on the interval. Further notice that as $\lambda_{T}(t', q_{l(0)}) - \mu'_{0}10^{k'_0 - d_0} = \lambda_{T}(t', q_{l(1)}) - \mu'_{1}10^{k'_1 - d_1}$, $\mu'_010^{k'_0-d_0}x - \lambda_{T}(t', q_{l(0)} = \mu'_110^{k'_1-d_1}x - \lambda_{T}(t', q_{l(1)}$. Therefore, $t'L_{p''}(\mu'_110^{k'_1-d_1}x - \lambda_{T}(t', q_{l(1)}) = t'L_{p''}(\mu'_010^{k'_0-d_0}x - \lambda_{T}(t', q_{l(0)})$. Thus, setting $y = t'L_{p''}(\mu'_110^{k'_1-d_1}x - \lambda_{T}(t', q_{l(1)})$, we have, for $a \in \{0,1\}$, $\lambda_{T}(y, q_{l(a)}) = \mu'_{a}10^{k'_a - d_a} x$.
\end{proof}

Since $U_{\mu'_1} \subseteq \im(q_{l(1)})$, there is a $t_{\mathrm{max}}$ which is the least element of $X_{2}^{\ast}$ in the shortlex ordering, such that $v \lelex t_{\mathrm{max}}$, $\pi_T(t_{\mathrm{max}}, q_{l(1)} = \pi_{T}(t_{\mathrm{max}}, q_{l(0)})$, and  $\mu'_1 10^{k_1 - d_1}1^{\omega} \in \lambda_{T}(t_{\mathrm{max}}, q_{{l(1)}})\im(\pi_{T}(t_{\mathrm{max}}, q_{l(1)}))$. Thus, repeated applications of Claim~\ref{claim:inductioncore}, shows that for any $x \in X_{2}^{\omega}$, such that, for $a \in \{0,1\}$, $\mu'_{a}10^{k'_a}\phi 1^{\omega} \le x \le \mu'_{a}10^{k'_a - d_a} 1^{\omega}$, $(x)h_{q_{l(0)}}^{-1} = (x)h_{q_{l(1)}}^{-1}$. Moreover observe that for $x \in  X_{2}^{\omega}$ such that,  $\mu'_{a}10^{k'_a}0^{\omega} \leqlex \mu'_{a}10^{k'_a}x \leqlex \mu_{a}10^{k'_a}\phi1^{\omega}$, $a \in \{0,1\}$, we have $L_{q_{l(a)}}(\mu'_{a}10^{k'_a}x) = v L_{p'}(x)$. Thus it is the case that  for  $\mu'_{a}10^{k'_a}0^{\omega} \le x \le \mu'_{a}10^{k'_a - d_a} 1^{\omega}$, $(x)h_{q_{l(0)}}^{-1} = (x)h_{q_{l(1)}}^{-1}$.

The above paragraph now implies that, $L_{q_{l(1)}}(\mu'_110^{k'_1-d_1}) = L_{q_{l(0)}}(\mu'_010^{k'_0-d_0})$. 

For $a \in \{0,1\}$, let $\nu_a = \mu'_a10^{k'_a-d_a} - \lambda_{T}(L_{q_{l(a)}}(\mu'_a10^{k'_a-d_a}), q_{l(a)})$, and let $R_a = \pi_{T}(L_{q_{l(a)}}(\mu'_a10^{k'_a-d_a}), q_{l(a)})$. Then $(\nu_a, R_a)$ is a state of $T'$, for $a \in \{1,2\}$. Moreover,  since for any $x \in X_{2}^{\omega}$, $L_{q_{l(a)}}(\mu'_a10^{k'_a-d_a}x) = L_{q_{l(1)}}(\mu'_110^{k'_1-d_1})L_{R_a}(\nu_a x)$, it follows that, as $ L_{q_{l(0)}}(\mu'_010^{k'_0-d_0}x) = L_{q_{l(1)}}(\mu'_110^{k'_1-d_1}x)$ for all $x \in X_{2}^{\omega}$, and $L_{q_{l(1)}}(\mu'_110^{k'_1-d_1}) = L_{q_{l(0)}}(\mu'_010^{k'_0-d_0})$, then, $L_{R_0}(\nu_0 x) = L_{R_1}(\nu_1 x)$.  Thus the states $(\nu_1, R_1)$ and $(\nu_0, R_0)$ are $\omega$-equivalent states of $T'$ and so represent the same state of $T^{-1}$.

Let $(\xi, S)$ be any state of $T'$, and, for $a \in \{0,1\}$ let $\delta_a \in X_{2}^{\omega}$, be such that $\lambda_{T}(\delta_a, S)$ has $\xi$ as a prefix,  and $\pi_{T}(\delta_a , S) = q_{l(a)}$. Then notice that $L_{S}(\lambda_{T}(\delta_a,S)\mu'_a10^{k'_a-d_a}) = \delta_aL_{q_{l(a)}}(\mu'_a10^{k'_a-d_a})$. Thus, $\pi_{T'}(\lambda_{T}(\delta_a,S)\mu'_a10^{k'_a-d_a} -\xi, (\xi, S) ) = (\nu_a, R_a)$. Since $\{k_a-d_a \mid a \in \{0,1\}\} = \{0, d\}$, and $d >0$, it follows that it is possible to read  a word ending in $0$ and a word ending in $1$ to the same location  in $T^{-1}$. As $T^{-1}$ is strongly synchronizing this means that, for any word $\gamma \in X_{2}^{j-1}$, the map $\pi_{T^{-1}}(\gamma, \centerdot): Q_{T^{-1}} \to Q_{T^{-1}}$ has only one image. This yields the desired contradiction, since $j$ is assumed to be the minimal synchronizing level of $T^{-1}$. Thus we deduce that $|T| = 1$.

Therefore we see that if $T \in \TOn{2}$, and  $|T|>1$, then we may replace $T$ with a conjugate $\R T \R$ such that $|\R T \R| = 1$. However, $|\R T \R| = 1$ if and only if $|T| = 1$, which yields the desired contradiction.

\end{proof}

\section{The intersection  \texorpdfstring{$\TOn{n} \cap \Ln{n}$}{Lg}}\label{Section:intersection}

The results of the previous section demonstrates that the intersection of $\TOn{2}$ with $\Ln{2}$ is finite and in particular, it is equal to the cyclic group of order $2$. This leads us to consider the intersection $\TLn{n}$ of the subgroups $\TOn{n}$ and $\Ln{n}$ of $\On$. We demonstrate in this section that the group $\TLn{n}$ is $C_{2}$ the cyclic group of order $2$ when $n$ is prime; is isomorphic to the direct product of $C_2$ with a finite cyclic group when $n$ is a power of a prime; and, when $n$ is not a power of a prime, it is isomorphic to the group $ C_2 \times \Z^{\pd{n}-1} \times C_{l}$ where $l$ is a number related to the prime decomposition of $n$ and $\pd{n}$ is the number of distinct primes dividing $n$. 

We require some further results and definitions from the article \cite{BelkBleakCameronOlukoya3}. 

\begin{Definition}
	Let $T \in \SOn$. For a state $q \in Q_{T}$ let $M_{q} \in \N$ be defined as follows. Since $\im(q)$ is clopen, there is a minimal subset $C \subseteq \xns$ such that  $\bigcup_{c \in C} U_{c} = \im(q)$. Set $M_{q} = |C|$ and $m_{q} \in \Z_{n-1}$ such that $M_{q} \equiv m_{q} \pmod{n-1}$.
\end{Definition}

Set $\Un{n-1}$ to be the group of units of $\Z_{n-1}$. The following result is proved in \cite{OlukoyaAutTnr}. 

\begin{Theorem}
	Let $T \in  \SOn$ then for any pair of states $q, q' \in Q_{T}$, $m_{q} = m_{q'}$. Moreover, the map $\rsig: \On  \to \Un{n}$ given by $T \mapsto m_{q}$ for a state $q \in Q$, is a homomorphism.
\end{Theorem}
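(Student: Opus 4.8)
The plan is to base everything on a single combinatorial invariant of clopen subsets of $\xno$. For a clopen set $A$, write $M_A$ for the size of the minimal antichain $C\subseteq\xns$ with $\bigsqcup_{c\in C}U_c=A$. First I would record three elementary facts, all consequences of the observation that replacing a cylinder $U_c$ by its $n$ children $U_{c0},\ldots,U_{c(n-1)}$ increases an antichain's size by exactly $n-1$: (i) any two antichains covering the same clopen $A$ have the same size modulo $n-1$ (pass to a common refinement), so $M_A \bmod (n-1)$ is well defined; (ii) if $A=\bigsqcup_i A_i$ is a finite disjoint union of clopen sets, then $M_A\equiv\sum_i M_{A_i}\pmod{n-1}$, since the union of the minimal antichains of the $A_i$ is an antichain for $A$; and (iii) $M_{\gamma A}=M_A$ for any $\gamma\in\xns$, as prepending a fixed word is an antichain bijection. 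In particular $M_q:=M_{\im(q)}$ reduces mod $n-1$ to the quantity $m_q$ of the definition.

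For the first assertion I would exploit the transition/output structure. Fixing $q\in Q_T$ and writing $q_a:=\pi_T(a,q)$, the identity $(ax)h_q=\lambda_T(a,q)\,(x)h_{q_a}$ gives the decomposition $\im(q)=\bigsqcup_{a\in\xn}\lambda_T(a,q)\,\im(q_a)$ into clopen pieces, disjoint by \textbf{S1}. By facts (ii) and (iii) this yields $M_q\equiv\sum_{a\in\xn}M_{q_a}\pmod{n-1}$. Iterating this one-step relation gives, for every $\ell\ge 0$, $M_q\equiv\sum_{w\in\xn^\ell}M_{\pi_T(w,q)}\pmod{n-1}$. I would then take $\ell=k$, the synchronizing level of $T$ (which exists since a minimal non-initial transducer is strongly connected and, here, strongly synchronizing): because $\pi_T(w,q)=\mathfrak{s}_{k}(w)$ is independent of $q$ for $|w|=k$, the right-hand side $\sum_{w\in\xn^k}M_{\mathfrak{s}_{k}(w)}$ does not depend on $q$, and so $m_q$ is constant across $Q_T$.

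For the homomorphism claim I would write $\rsig(T)$ for the common value $m_q$ and first prove multiplicativity on the monoid $\SOn$. Given $T,U\in\SOn$ and a state $(t,u)$ of $T\ast U$, the induced map is $h_t\,h_u$, so $\im(t,u)=(\im^{T}(t))h_u$. Taking the minimal antichain $C$ of $\im^{T}(t)$ (of size $M^{T}_t$) and applying $h_u$, facts (i)--(iii) together with $(U_c)h_u=\lambda_U(c,u)\,\im(\pi_U(c,u))$ give $M_{(t,u)}\equiv\sum_{c\in C}M^{U}_{\pi_U(c,u)}\pmod{n-1}$; but by the first part every state of $U$ contributes the same residue $\rsig(U)$, so the sum equals $|C|\,\rsig(U)=M^{T}_t\,\rsig(U)$, whence $\rsig(TU)=\rsig(T)\rsig(U)$. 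Passing through $\core$ and minimal representatives changes neither the induced maps nor the images, so this survives to the product $TU$ of $\SOn$. Finally, since $\rsig(\id)=1$ and every element of $\On$ is invertible, $\rsig(T)\rsig(T^{-1})=\rsig(\id)=1$ forces $\rsig(T)$ to be a unit of $\Z_{n-1}$; thus $\rsig$ takes values in the group of units $\Un{n-1}$ and is a group homomorphism.

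The main obstacle is the constancy statement, and specifically the realisation that the naive one-step relation $M_q\equiv\sum_a M_{q_a}$ does not by itself force $m_q$ to be constant --- it is only the iteration down to the synchronizing level $k$, where $\pi_T(w,q)$ ceases to depend on $q$, that collapses the dependence. Care is also needed in the mod-$(n-1)$ bookkeeping when some outputs $\lambda_T(a,q)$ are empty (handled uniformly by fact (iii) with the empty prefix), and in checking that the families assembled in fact (ii) and in the product computation really are antichains, which follows from injectivity (\textbf{S1}) and the disjointness of the pieces.
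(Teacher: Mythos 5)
The paper does not actually prove this theorem---it is imported from \cite{OlukoyaAutTnr} with no argument given---so there is nothing in-text to compare your proof against. Taken on its own terms, your argument is correct and complete. The three antichain facts are sound (for (i), refining both antichains to the common level-$N$ antichain changes each size by multiples of $n-1$), the decomposition $\im(q)=\bigsqcup_{a\in\xn}\lambda_T(a,q)\,\im(\pi_T(a,q))$ is valid because elements of $\SOn$ are minimal and hence have no states of incomplete response, with disjointness coming from \textbf{S1}, and the key observation---that the one-step congruence only collapses to a $q$-independent quantity after iterating down to the synchronizing level---is exactly the right mechanism and is where strong synchronization is genuinely used. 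The multiplicativity computation for $T\ast U$ and the unit argument via $\rsig(T)\rsig(T^{-1})=1$ are likewise fine. One small imprecision: your claim that passing to $\core$ and the minimal representative ``changes neither the induced maps nor the images'' is not quite right, since $T\ast U$ may acquire states of incomplete response even when $T$ and $U$ have none; the image of a state then changes by deletion of a common prefix $\Lambda(\ew,(t,u))$. This is harmless---your fact (iii) shows $M$ is invariant under such a shift---but it should be invoked explicitly rather than asserting the images are unchanged. (Also note the codomain in the statement should be read as $\Un{n-1}$, the units of $\Z_{n-1}$, consistent with the paper's own definition; your proof correctly lands there.)
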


The following result, which combines a result from \cite{OlukoyaAutTnr} with a result in \cite{BelkBleakCameronOlukoya3} is crucial for the results in this section.

\begin{proposition} \label{thm:conecoveringforastate}
	Let $T \in \Ln{n}$. Then there is a maximal number $s(T)$ a divisor of a power of $n$ which is not divisible by $n$ and such that the following holds. For  any state $q \in Q_{T}$ there are  numbers $l,m \in \N_{0}$, and elements $\mu_1, \mu_2, \ldots, \mu_{s(T)n^{m}} \in \xn^{l}$ such that $\im(q) = \bigcup_{1 \le a \le s} U_{\mu_a}$. Moreover, for $T, U \in \Ln{n}$,  if $s$ is the maximal number not divisible by $n$  $s(T)s(U) = sn^{j}$ for some $j \in \N$, then $s(TU) = s$.
\end{proposition}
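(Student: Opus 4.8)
The plan is to work throughout with the Bernoulli measure $\mu$ on $\xno$ normalised by $\mu(U_\gamma) = n^{-|\gamma|}$, and to track, for each state $q$, the single number $\mu(\im(q))$. Since $\im(q)$ is clopen it is, for every large enough $l$, a disjoint union of $N_l$ cones $U_{\mu_a}$ with $\mu_a \in \xn^{l}$, and $\mu(\im(q)) = N_l\, n^{-l}$; so the proposition is equivalent to showing that $\mu(\im(q))$ lies in a single coset $s(T)\cdot n^{\Z}$ of $n^{\Z}$ inside $\mathbb{Q}_{>0}$, that the representative $s(T)$ can be taken to be a divisor of a power of $n$ not divisible by $n$, and that this coset is multiplicative in $T$.

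The central step exploits the defining constraint \textbf{SL3} of $\SLn{n}$, namely that $|\lambda_T(a,q)| = |a|$ whenever $\pi_T(a,q) = q$. I would assign to each edge $q \xrightarrow{a} \pi_T(a,q)$ of the underlying graph the integer \emph{length defect} $|\lambda_T(a,q)| - 1$. As $T$ is minimal it is strongly connected, and \textbf{SL3} says precisely that this edge-function sums to $0$ around every circuit; hence it is a coboundary, i.e. there is $\beta_T\colon Q_T \to \Z$ with $|\lambda_T(a,q)| - 1 = \beta_T(\pi_T(a,q)) - \beta_T(q)$ for every edge. Substituting this into the self-similarity identity $\mu(\im(q)) = \sum_{a \in \xn} n^{-|\lambda_T(a,q)|}\mu(\im(\pi_T(a,q)))$ and setting $\nu(q) := n^{-\beta_T(q)}\mu(\im(q))$ collapses it to the averaging identity $\nu(q) = \tfrac1n\sum_{a \in \xn}\nu(\pi_T(a,q))$. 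Thus $\nu$ is a right fixed vector of the (row-)stochastic matrix of the uniform random walk $q \mapsto \pi_T(a,q)$ on the strongly connected, hence irreducible, state graph. By Perron--Frobenius this eigenspace is one-dimensional and contains the constant vector, so $\nu$ is constant: $\mu(\im(q)) = C_T\, n^{\beta_T(q)}$ for a single $C_T \in \mathbb{Q}_{>0}$ and all $q$. This already yields that the coset $C_T n^{\Z}$, equivalently the $n$-free part of the cone count $N_l(\im(q))$, is independent of the state.

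I would then obtain multiplicativity and the divisor-of-a-power-of-$n$ property together from one pushforward computation. For $T, U \in \Ln{n}$ take a state $(q,u)$ of $T\ast U$; its image is the pushforward of $\im_T(q)$ under the map induced by $U_u$. Writing $\im_T(q)$ as a disjoint union of level-$L$ cones $U_c$ and using the coboundary relation for $U$, each cone maps to $\lambda_U(c,u)\,\im_U(\pi_U(c,u))$ of measure $n^{-L+\beta_U(u)}C_U$, which is independent of $c$; summing over the $N_L(\im_T(q)) = C_T n^{\beta_T(q)+L}$ cones gives $\mu(\im_{T\ast U}((q,u))) = C_T C_U\, n^{\beta_T(q)+\beta_U(u)}$. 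Comparing with the form $C_{TU}\, n^{\beta_{TU}(\cdot)}$ supplied by the previous step applied to $TU \in \Ln{n}$ yields $C_{TU} \equiv C_T C_U \pmod{n^{\Z}}$ (passage to $\core$ and to the minimal representative changes images only by a power-of-$n$ factor, which is absorbed into the coset). Taking $U = T^{-1}$, so $TU = \id$ and $C_{\id} = 1$, forces $C_T C_{T^{-1}} \in n^{\Z}$; since $C_T = N_l\, n^{-l-\beta_T(q)}$ with $N_l \in \N$ has $v_p(C_T) = v_p(N_l) \ge 0$ for every prime $p \nmid n$, and likewise for $C_{T^{-1}}$, while their $p$-valuations sum to $0$, both vanish. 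Hence $C_T$ involves only primes dividing $n$, its coset has a unique representative $s(T)$ dividing a power of $n$ and not divisible by $n$, and $\mu(\im(q)) = s(T)\,n^{\,m-l}$ exhibits the asserted cover of $\im(q)$ by $s(T)n^{m}$ equal-length cones; finally $C_{TU} \equiv C_T C_U$ is exactly the claim $s(TU) = s$.

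I expect the main obstacle to be the first step: recognising that \textbf{SL3} is precisely a cocycle condition and that the renormalised quantity $\nu$ is harmonic for the uniform walk, so that constancy across states drops out of irreducibility rather than from any delicate length estimate. A naive attempt to compare $\mu(\im(q))$ and $\mu(\im(q'))$ directly along a connecting path yields only inequalities, because the local scaling of the induced map varies with the \emph{phase} along a circuit; it is the coboundary $\beta_T$ that converts the loop-length balance into an exact power-of-$n$ relation. The remainder is bookkeeping: verifying that $\core$ and minimisation move images only by powers of $n$, and that the per-cone measure in the pushforward is genuinely independent of the cone, both of which follow at once from the coboundary identity for $U$.
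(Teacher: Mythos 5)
The paper does not actually prove this proposition: it is imported wholesale, the text stating that it ``combines a result from \cite{OlukoyaAutTnr} with a result in \cite{BelkBleakCameronOlukoya3}''. So there is no in-paper argument to compare against; your proposal has to stand on its own, and I believe it does. The three pillars are all sound. (i) Condition \textbf{SL3} quantifies over \emph{all} words $a$ with $\pi_T(a,q)=q$, i.e.\ over all closed walks of the (strongly connected, since $T$ is minimal and core) state graph, so the edge weight $|\lambda_T(a,q)|-1$ really does vanish on every closed walk and is therefore a coboundary $\beta_T(\pi_T(a,q))-\beta_T(q)$; the standard path-difference construction of $\beta_T$ is well defined exactly because one can return to the basepoint. (ii) The identity $\mu(\im(q))=\sum_{a\in\xn}n^{-|\lambda_T(a,q)|}\mu(\im(\pi_T(a,q)))$ uses only injectivity of $h_q$ (condition S1) and clopenness of images (S2), and after renormalising by $n^{-\beta_T(q)}$ it becomes harmonicity for an irreducible stochastic matrix, so constancy of $\nu$ follows from the maximum principle. (iii) The pushforward computation for $T\ast U$ is correct because the telescoped coboundary relation $|\lambda_U(c,u)|=|c|+\beta_U(\pi_U(c,u))-\beta_U(u)$ makes the per-cone measure independent of $c$, and passing to $\core$ and to the minimal representative only strips common prefixes, hence only rescales measures by powers of $n$; combined with $C_{\id}=1$ and the nonnegativity of $v_p(C_T)$ for $p\nmid n$ this pins $C_T$ into the subgroup of $\mathbb{Q}_{>0}/n^{\Z}$ generated by the primes dividing $n$, where the integer representative dividing a power of $n$ and not divisible by $n$ is unique. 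Two cosmetic points only: your phrase ``the $n$-free part of the cone count'' is not literally correct until after the $T^{-1}$ step has shown that $N_l$ has no prime factors outside those of $n$, so the logical order matters; and the proposition's word ``maximal'' is best read as the (unique) representative you construct. Your route -- a Bernoulli-measure cocycle/harmonicity argument -- is pleasantly short and entirely self-contained, which is arguably an improvement on the paper's reliance on two unpublished references.
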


Define a group $\Gn{n}$ as follows. Let $G_{n}$ be the submonoid of $\N$ generated by the prime divisors of $n$. Define an equivalence relation $\sim$ on $G_{n}$ such that $ u \sim t$ if and only if $u$ is a power of $n$ times $t$. Then $\Gn{n} := G_{n} / \sim$ is a finitely generated abelian group. It is shown in \cite{BelkBleakCameronOlukoya3} that $\Gn{n} \cong \Z^{\pd{n} -1} \times \Z/l\Z$ where, if $ p_1^{l_1} p_{2}^{l_2} \ldots p_{\pd{n}}^{l_{\pd{n}}}$ is the prime decomposition of $n$, then $l = \gcd(l_1, \ldots, l_r)$.

The following result is from \cite{BelkBleakCameronOlukoya3} and is essentially a corollary of Proposition~\ref{thm:conecoveringforastate}.

\begin{Theorem}\label{Thm:epitoGn}
	The map $s: \Ln{n} \to \Gn{n}$ by $T \mapsto s(T)$ is an epimorphism.
\end{Theorem}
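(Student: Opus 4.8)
The map $s$ being a homomorphism is almost immediate from Proposition~\ref{thm:conecoveringforastate}. That proposition guarantees that $s(T)$ is a divisor of a power of $n$, so all of its prime factors divide $n$ and it represents a well-defined class in $\Gn{n} = G_{n}/{\sim}$; moreover the single-state identity transducer has image the single cone $U_{\varepsilon}$, so $s(\id) = 1$. The multiplicativity clause of Proposition~\ref{thm:conecoveringforastate} says precisely that $s(TU)$ and $s(T)s(U)$ differ by a power of $n$, i.e.\ they determine the same element of $\Gn{n}$; since the class of a product in $G_{n}$ is the product of the classes, this is exactly the identity $s(TU) = s(T)\,s(U)$ in $\Gn{n}$. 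Thus $s$ is a monoid homomorphism between the groups $\Ln{n}$ and $\Gn{n}$, hence a group homomorphism.

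For surjectivity I would first reduce to generators. The monoid $G_{n}$ is generated by the primes $p_{1}, \ldots, p_{\pd{n}}$ dividing $n$, so their classes generate the quotient group $\Gn{n}$; indeed, as noted in the discussion preceding the theorem, the relation $p_{1}^{l_{1}} \cdots p_{\pd{n}}^{l_{\pd{n}}} = n \sim 1$ already exhibits the class of each prime as an inverse of a product of the others. Hence it suffices to produce, for each prime $p$ dividing $n$, an element $T_{p} \in \Ln{n}$ whose $s$-value has the class of $p$. Note that when $n = p$ is prime this is vacuous, since $\Gn{n}$ is then trivial; in all remaining cases $p < n$ and $n \nmid p$, so $p$ is a legitimate value of $s$.

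The heart of the argument is therefore the construction of such a $T_{p}$. By the cone-counting description in Proposition~\ref{thm:conecoveringforastate} I want a minimal, core, bisynchronizing transducer satisfying the length constraint~\ref{Lipshitzconstraint} a state of which has image a union of exactly $p$ (equivalently $p\,n^{m}$) equal-length cones. A first observation is that $T_{p}$ must be genuinely asynchronous: any \emph{synchronous} invertible transducer induces, from every state, a map whose first output symbol ranges over all of $\xn$, so its state images are all of $\xno$ (a single cone) and $s = 1$. I would therefore build $T_{p}$ as a carry-/marker-type gadget whose central loop states read a short block before committing output, so that the image of each state covers precisely $p$ first-level cones while outputs along every circuit keep the same length as the input (so that $T_{p} \in \SLn{n}$). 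One then verifies invertibility by producing $T_{p}^{-1}$ via Construction~\ref{construction:inverse}, checks that both $T_{p}$ and $T_{p}^{-1}$ are strongly synchronizing, and finally confirms that the minimal equal-length cone cover of a state image has cardinality congruent to $p$ and not to $p\,n^{j}$, so that $s(T_{p}) = p$. This last construction, together with the simultaneous verification that $T_{p}$ is bisynchronizing, lies in $\SLn{n}$, and has $s$-value exactly $p$ rather than $p$ times a power of $n$, is the step I expect to be the main obstacle; the homomorphism property and the reduction to generators are routine by comparison. It is worth remarking that under the identification $\Ln{n} \cong \aut{\Xnz,\shift{n}}/\gen{\shift{n}}$ coming from the exact sequence in the introduction, the homomorphism $s$ is the dimension (degree) representation of the corresponding shift automorphism, sending it to multiplication by a positive unit of $\Z[1/n]$ modulo $n$, and the transducers $T_{p}$ are precisely the automorphisms realising multiplication by $p$.
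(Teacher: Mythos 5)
Your overall reduction is the right one, and the homomorphism half is fine: the paper itself gives no proof of this theorem (it is quoted from \cite{BelkBleakCameronOlukoya3} and described as essentially a corollary of Proposition~\ref{thm:conecoveringforastate}), and the multiplicativity clause of that proposition says exactly that $s(TU)$ and $s(T)s(U)$ agree up to a power of $n$, hence define the same class in $\Gn{n}$. Likewise, surjectivity does come down to hitting the class of each prime $p$ dividing $n$. The gap is that you never actually produce the required element $T_{p}$, and the one concrete mathematical claim you offer to guide the construction is false. You assert that a synchronous transducer in $\SLn{n}$ necessarily has every state image equal to all of $\xno$, so that $s=1$ and $T_{p}$ ``must be genuinely asynchronous''. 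This conflates two notions of invertibility: the classical one for synchronous transducers (each state's single-letter output map is a bijection of $\xn$) and membership of $\On$ (each state induces an injection of $\xno$ with clopen image). Elements of $\On$ need only satisfy the latter, and a synchronous transducer can do so without letter-bijections: a state may send several letters to the same output letter provided the target states have pairwise disjoint images.

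This is not a hypothetical loophole; it is exactly how the needed generators are built later in the paper. Example~\ref{Example:Tnuntodivisorsofn} constructs, for each factorisation $n=de$, a \emph{synchronous} transducer $T(d,e)$ on $d$ states with $\pi_{T(d,e)}(ad+b,q_{j})=q_{b}$ and $\lambda_{T(d,e)}(ad+b,q_{j})=je+a$; each letter map is $d$-to-one, the image of $q_{j}$ is the union of the $e$ cones $U_{je},\ldots,U_{je+e-1}$, and $s(T(d,e))=e$. Being synchronous it trivially satisfies \ref{Lipshitzconstraint}, it is synchronizing at level $1$, and it lies in $\TLn{n}\le\Ln{n}$. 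Taking $e=p$ for each prime $p$ dividing $n$ supplies preimages of the generators of $\Gn{n}$ and completes surjectivity. Your proposed asynchronous ``carry gadget'' might well be made to work, but as written it is a placeholder for the entire content of the surjectivity statement, and the heuristic steering you toward asynchrony would have you avoid precisely the simplest correct construction.
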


We also require the result below which characterises when an element of $\SOn$ belongs to $\On$ based on the map $\Pi$.

\begin{Theorem}[Bleak, Cameron, O]
Let $T \in \SOn$ and suppose that $(T)\Pi$ is a bijection. Then $T \in \On$.
\end{Theorem}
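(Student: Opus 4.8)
The plan is to use that $\Pi$ is an injective homomorphism to reduce the problem to a surjectivity statement, and then to extract that surjectivity from the action of $(T)\Pi$ on the dense set of periodic points of the shift. Since $\On$ is precisely the group of units of the monoid $\SOn$, it suffices to produce $U \in \SOn$ with $(U)\Pi = ((T)\Pi)^{-1}$: for then $(TU)\Pi = (UT)\Pi = (\id)\Pi$, and injectivity of $\Pi$ forces $TU = UT = \id$, so $T \in \On$. I would obtain such a $U$ by showing that $T$ represents a genuine self-homeomorphism of $\xno$ and then inverting it.

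First I would interpret $(T)\Pi$ dynamically. By definition, $(\rotclass{\gamma})(T)\Pi = \rotclass{\nu}$ records that, at the unique state $q$ with $\pi_T(\gamma,q)=q$, the initial transducer $T_q$ sends the periodic point $\gamma^{\omega}$ to $\nu^{\omega}$, since $\lambda_T(\gamma,q)$ is a power of $\nu$ and $q$ has no incomplete response. Thus $(T)\Pi$ is exactly the action of $T$ on the periodic points of $\xno$. Surjectivity of $(T)\Pi$ therefore says that every periodic point lies in the image of one of the maps $h_q$, and injectivity of $(T)\Pi$ says that distinct periodic points are never collapsed across distinct states; together with condition \textbf{S1} this amounts to the induced action of $T$ on periodic points being a bijection. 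The crux is then to upgrade this to topological surjectivity: for an arbitrary basic cone $U_w$ I would choose a prime word $\mu$ with prefix $w$ and use surjectivity of $(T)\Pi$ to find $\gamma$ with $(\rotclass{\gamma})(T)\Pi = \rotclass{\mu}$, placing a point of the form $\mu^{\omega}$ (hence a point meeting $U_w$) in the image. Since each $\im(q)$ is clopen by \textbf{S2}, the image of $T$ is closed, and as it meets every cone it is dense, hence all of $\xno$. With injectivity from \textbf{S1} (and the non-collapsing supplied by injectivity of $(T)\Pi$), the map represented by $T$ is a continuous bijection of the compact space $\xno$, i.e.\ a homeomorphism.

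The step I expect to be the main obstacle is the final passage from ``$T$ represents a homeomorphism'' to ``$T$ is invertible inside $\SOn$'': the inverse of a strongly synchronizing transducer can be merely one-way synchronizing, and I must exclude this in order to land the inverse back in $\SOn$. To do this I would form the candidate inverse by the formal inversion of Construction~\ref{construction:inverse}, now legitimate because every $h_q$ is a homeomorphism onto its clopen image, producing an initial transducer $U_p$ for which the minimal representative of $(T \ast U)_{(p,q)}$ induces the identity and so has trivial core. Invoking the result that a product of strongly synchronizing transducers is again strongly synchronizing, together with condition \textbf{S2}, I would verify that $U_p$ is itself strongly synchronizing — and here bijectivity of $(T)\Pi$ does the essential work, since controlling $T$ on the dense set of all periodic orbits is exactly what forces the inverse to synchronize rather than remain one-way. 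The stated characterization of membership in $\On$ then yields $\core(U_p) \in \SOn$ with $T\,\core(U_p) = \core(U_p)\,T = \id$, completing the proof that $T \in \On$.
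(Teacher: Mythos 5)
There is a wrinkle here: the paper does not prove this theorem at all --- it is imported as a black box from the cited Bleak--Cameron--Olukoya work --- so your attempt cannot be measured against an in-paper argument and must stand on its own. On its own terms it has a genuine gap. Your reduction (find $U\in\SOn$ with $(U)\Pi=((T)\Pi)^{-1}$ and invoke injectivity of $\Pi$) is sound, and your reading of $(T)\Pi$ as the action on periodic points is correct. But almost everything you then actually establish is true of \emph{every} element of $\SOn$ with no hypothesis on $\Pi$ whatsoever: by \textbf{S1}, \textbf{S2} and compactness of $\xno$, each $h_q$ is automatically a continuous injection onto a clopen image, hence a homeomorphism onto that image. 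So ``$T$ represents a homeomorphism'' is not where the content of the theorem lives; if your argument were complete as written it would show $\SOn=\On$, which would make the hypothesis on $(T)\Pi$ vacuous and the paper's separate characterisation of $\On$ inside $\SOn$ pointless. The entire weight of the theorem sits on the one step you defer: showing that bijectivity of $(T)\Pi$ forces the formal inverse of Construction~\ref{construction:inverse} to be a \emph{finite, strongly synchronizing} transducer with core in $\SOn$, rather than merely one-way synchronizing. You assert that ``bijectivity of $(T)\Pi$ does the essential work'' here, but supply no mechanism linking the permutation of $\rwnl{\ast}$ to synchronization of the inverse. That assertion \emph{is} the theorem, and it is missing.

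Two further concrete problems. First, a non-initial $T\in\SOn$ does not induce a single self-map of $\xno$: each state $q$ carries its own $h_q$, and the images $\im(q)$ of distinct states typically overlap (compare Lemma~\ref{Lemma:imageofstateclosedsubinterval} and the remark following it, where every state's image contains a neighbourhood of the point $\sfrac{1}{2}$), so the phrase ``the map represented by $T$ is a continuous bijection of the compact space $\xno$'' does not refer to a well-defined object; you must say which map you are inverting. Second, your coverage argument is off by a rotation: applying surjectivity of $(T)\Pi$ to $\rotclass{\mu}$ produces some state $q$ and some $\nu$ with $\rotclass{\nu}=\rotclass{\mu}$ and $\nu^{\omega}\in\im(q)$, but you have no control over \emph{which} rotation $\nu$ occurs, so choosing $\mu$ with prefix $w$ does not place a periodic point inside the cone $U_{w}$. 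The set $\bigcup_{q}\im(q)$ is clopen, so density of what it contains would indeed suffice, but the density itself is not delivered by the argument given. Both issues are plausibly repairable, but the central gap --- extracting synchronization of the inverse from bijectivity of $(T)\Pi$ --- is the substance of the result and would need an actual argument, not an appeal to intuition about dense orbits.
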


As an immediate corollary we have:

\begin{corollary}\label{cor:TMnbijectionTOn}
	Let $T \in \TSOn$ and suppose that $(T)\Pi$ is a bijection. Then $T \in \TOn{n}$.
\end{corollary}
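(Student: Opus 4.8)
The plan is to deduce this directly from the preceding theorem of Bleak, Cameron and the author, combined with the \emph{definition} of $\TOn{n}$ as an intersection; essentially no new argument is needed, which is why it is flagged as an immediate corollary.

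First I would note that, by construction, $\TSOn$ is a submonoid of $\SOn$ (it is precisely the elements of $\SOn$ satisfying the additional order-preservation constraint \textbf{T1}). Hence the hypothesis $T \in \TSOn$ in particular gives $T \in \SOn$. Since $(T)\Pi$ is assumed to be a bijection of $\rwnl{\ast}$, the hypotheses of the quoted theorem are met verbatim, and I may conclude $T \in \On$.

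Next I would invoke the definition $\TOn{n} = \On \cap \TSOn$. We have just established $T \in \On$, while $T \in \TSOn$ holds by assumption; therefore $T$ lies in $\On \cap \TSOn = \TOn{n}$, which is exactly the claim. Note that membership in $\TOn{n}$ requires only these two facts about $T$ itself, so in particular there is no need to separately verify that the inverse of $T$ also preserves the lexicographic order.

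I do not expect any genuine obstacle here: all of the substance is carried by the theorem, whose proof (showing that surjectivity of the induced transformation on $\rwnl{\ast}$ forces $T$ to be invertible inside $\SOn$) supplies the real content. The only things to check are the two bookkeeping facts that $\TSOn \subseteq \SOn$ and that $\TOn{n}$ is, by definition, $\On \cap \TSOn$; once these are recorded, the constraint \textbf{T1} defining $\TSOn$ is automatically inherited because it is part of the standing hypothesis $T \in \TSOn$, and the argument closes.
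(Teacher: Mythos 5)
Your proof is correct and matches the paper's intent exactly: the paper offers no separate argument, presenting the statement as an immediate consequence of the theorem together with the containment $\TSOn \subseteq \SOn$ and the definition $\TOn{n} = \On \cap \TSOn$, which is precisely what you write. Your closing remark that the inverse need not be checked separately is also consistent with the paper's definitions.
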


We begin with the following lemma.

\begin{lemma}\label{lem:powerofnmeanssizeone}
	Let $T \in \TLn{n}$. Then $s(T) = 1$ if and only if $|T| = 1$.
\end{lemma}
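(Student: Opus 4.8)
The plan is to prove the two implications separately; the forward direction $|T|=1\Rightarrow s(T)=1$ is routine, while the reverse is where the work lies. For the forward direction, suppose $|T|=1$, say $Q_T=\{q\}$. Then every letter $x\in\xn$ is a loop at $q$, so constraint \textbf{SL3} forces $|\lambda_T(x,q)|=1$; hence $T$ is synchronous and the homeomorphism $h_q$ induced by $T_q$ carries $\xno$ onto itself. Thus $\im(q)=U_\ew$ is a single cone, so $M_q=1=1\cdot n^0$, and Proposition~\ref{thm:conecoveringforastate} gives $s(T)=1$.

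For the reverse direction I would first reduce to the orientation-preserving case. The single-state orientation-reversing involution (the generalisation to $\xn$ of the transducer $\R$, sending $a\mapsto n-1-a$ at each coordinate) lies in $\TLn n$ and, being single state, has $s$-value $1$ by the forward direction; since $s$ is a homomorphism (Theorem~\ref{Thm:epitoGn}), multiplying by it leaves the $s$-value unchanged. It therefore suffices to show that an orientation-preserving $T\in\TLn n$ with $s(T)=1$ is the identity, since then an orientation-reversing $T$ with $s(T)=1$ gives (reversal)$\cdot T=\id$, whence $T$ is itself that single-state involution and $|T|=1$. So I would assume $T$ orientation-preserving with $s(T)=1$ and, for contradiction, $|T|>1$, and aim to drive the minimal synchronizing level of $T^{-1}$ down, exactly as in the treatment of $\TOn 2$ in Section~\ref{Section:combiproof}.

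The mechanism I would use is the collapse analysis of Section~\ref{Section:combiproof}: take the loop states $q_{l(0)},q_{l(n-1)}$ at the two extreme letters, locate a shortest word $w$ after which they are identified, and study the output discrepancy there. The point of the hypothesis $s(T)=1$ is that, by Proposition~\ref{thm:conecoveringforastate}, every image $\im(q)$ is a union of $n^{m_q}$ cones of a common length, i.e. has measure a power of $n$; this is the property that for $n=2$ was automatic and that makes the ``fill-the-interval-in-the-same-way'' induction work. Concretely, the power-of-$n$ structure forces the two competing outputs at the collapse to agree up to a pure power of the boundary letter, which is precisely the configuration of Lemma~\ref{lem:forcingsynch1} and Lemma~\ref{lem:restrictionsonleftend}. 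Iterating the local step (the general-$n$ analogue of the interval-filling claim inside that proof) one shows that in the inverse transducer a word ending in $0$ and a word ending in $n-1$ can be read to a common state; by Lemma~\ref{lem:twolettersonestate} this collapses the synchronizing level of $T^{-1}$ below its minimal value, the desired contradiction, so $|T|=1$.

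The hard part will be adapting the delicate interval-filling induction from the binary case to general $n$ and isolating exactly where $s(T)=1$ is indispensable. For arbitrary $T\in\TLn n$ the collapse must \emph{fail} — otherwise every element would be single state, contradicting the fact that $s$ surjects onto the nontrivial group $\Gn n$ — so the argument cannot use only the Lipschitz condition \textbf{SL3}; it must use $s(T)=1$ to rule out a non-power-of-$n$ image measure, which is the genuine obstruction permitting multi-state elements. Thus the crux is the quantitative step: showing that when all state images have measure a power of $n$, the output-length differences appearing in Lemma~\ref{Lemma:characterisingcollapsoftwostates} are absorbed as powers of the extreme letters, so that the synchronizing level of $T^{-1}$ genuinely drops. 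Everything else (the reduction above, the bookkeeping with $\im(q)$, and the final appeal to Corollary~\ref{cor:TMnbijectionTOn} to stay inside $\TOn n$) is routine.
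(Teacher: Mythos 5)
Your forward direction is fine. The reverse direction, however, is a research plan rather than a proof: the entire technical content is deferred to ``adapting the delicate interval-filling induction from the binary case to general $n$,'' and the lemmas you propose to reuse (Lemma~\ref{Lemma:characterisingcollapsoftwostates}, Lemma~\ref{lem:forcingsynch1}, Lemma~\ref{lem:restrictionsonleftend}) are stated and proved only for $\TOn{2}$; nothing in the proposal establishes their analogues for general $n$. Worse, even granting the whole collapse analysis, your endgame has a concrete gap for $n\ge 3$: producing in $T^{-1}$ a word ending in $0$ and a word ending in $n-1$ that read to a common state only lets Lemma~\ref{lem:twolettersonestate} identify the synchronizing maps $\mathfrak{s}_j(0\gamma)$ and $\mathfrak{s}_j((n-1)\gamma)$. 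For $n=2$ this exhausts all letters and drops the minimal synchronizing level of $T^{-1}$, but for $n\ge 3$ it identifies only two of the $n$ letter-classes and yields no contradiction. So the proposed mechanism does not close.

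The paper's proof is entirely different and far simpler; it never touches the Section~\ref{Section:combiproof} machinery. For an orientation-preserving $T$ with $s(T)=1$, Proposition~\ref{thm:conecoveringforastate} says every $\im(q)$ is a disjoint union of $n^{m}$ cones of a common length $l$. Since each state induces an order-preserving injection with clopen image and empty greatest common prefix (no incomplete response), any state $p$ with $U_{0}\subseteq\im(p)$ has image an interval $[0,n^{m-l}]$ with $n^{m-l}>\sfrac{1}{n}$, forcing $m=l$, i.e.\ $p$ is a homeomorphism state. The $0$-loop state has $U_0$ in its image by Lemma~\ref{Lemma:orientationppreservingzerooneloopstatesactasidentityonloops}, and the homeomorphism property propagates from a state to all its successors $p_0,p_1,\dots,p_{n-1}$ by the same cone-count argument. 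Hence every state is a homeomorphism state, $T$ is synchronous with each state acting as an order-preserving bijection of $\xn$, i.e.\ as the identity on letters, so $T=\id$. This is where $s(T)=1$ is genuinely used: a cone count that is a power of $n$ and an image interval anchored at $0^{\omega}$ leave no room for a proper clopen image. You correctly sensed that $s(T)=1$ must rule out non-power-of-$n$ image measures, but you routed that observation through the hardest part of the paper instead of through the short argument it actually supports.
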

\begin{proof}
	Clearly if $|T|= 1$ then $T$ has a single state inducing a homeomorphism of the interval and so $s(T) = 1$.
	
	Let $T \in \TOn{n}$ be an orientation preserving element.
	
	First assume that there is a state $q \in Q_{T}$ that is a homeomorphism state. Since $U_{0} \subseteq \im(q)$, it follows that $\lambda_{T}(0, q)$ is a power of $0$. Set $p_i = \pi_{T}(i, q)$ for $0 \le i \le n-1$. We note that $U_{0} \subseteq U_{p_0}$ since $T$ has no states of incomplete response and $U_0 \subseteq \im(q)$.
	
	Now let $l, m \in \N_0$ and $\mu_1 \lelex \mu_2 \lelex \ldots \lelex \mu_{n^{m}} \in \xn^{l}$ be such that $\im(p_0) = \bigcup_{1 \le a \le n^{m}}U_{\mu_a}$. Since $U_{0} \subset \im p_0$, it must be the case that $n^{l-1} \le s$ and in fact,  $\bigcup_{1 a \le n^{l-1}} U_{\mu_a} = U_{0}$. However, since $T$ has no states of incomplete response, it must be the case that $U_{1} \cap \im(p_0) \ne \emptyset$. This means that $m = {l}$ and so $\mu_1 \lelex \mu_2 \lelex \ldots \lelex \mu_{n^{l}} = \xn^{l}$. Thus $p_0$ is a homeomorphism state.
	
	However, since the image of every state of $T$ induces an orientation preserving continuous  injection of the interval, the state $q$ induces a homeomorphism of the interval. In particular $U_{0} \subseteq \im(p_1)$.
	
	By repeating the arguments above, we see that this implies that $p_1$ is a homeomorphism state. An easy induction argument now implies that $p_i$ is a homeomorphism state for every $i$.
	
	Thus if $T$ contains a homeomorphism state, then all states of $T$ are homeomorphism states.
	
	We now argue that the unique state $q_0 \in Q_{T}$ which satisfies $\pi_{T}(0, q_0) = q_0$ is a homeomorphism state. To see this, observe that by Lemma~\ref{Lemma:orientationppreservingzerooneloopstatesactasidentityonloops} we have $\lambda_{T}(0, q_0) = 0$, in particular, since $T$ is assumed minimal, $U_0 \subset \im(q_0)$. By preceding arguments we then conclude, using Theorem~\ref{thm:conecoveringforastate} again, that $q_0$ is a homeomorphism state.
	
	Thus all states of $T$ are homeomorphism states, in particular $T$ is an invertible, synchronous transducer. This means that all states of $T$ induce bijections of $\xn$. Since $T$ is an orientation preserving element of $\TOn{n}$, all states of $T$ must induce the identity map on $\xn$. Therefore, $T$ is the single state identity transducer.
	 
	The result now follows. 
\end{proof}

When $n$ is not prime there are elements of $\TLn{n}$ which have minimal representatives of size bigger than $1$. We construct such elements below.

\begin{example}\label{Example:Tnuntodivisorsofn}
	Let $d,e \in \N$ be such that $de = n$. Construct a transducer $T(d,e)$ as follows. The state sate $Q_{T(d,e)} = \{ q_0, q_1, \ldots, q_{d-1}\}$. The transition and output functions are gives as follows. Let $0 \le i \le s-1$ and $0 \le j \le  d-1$, write $i = ad + b$ for some $0 \le b \le d-1$ and $0 \le a \le e-1$, we have $\pi_{T(d,e)}(ad + b, q_{j}) = q_{b}$ and $\lambda_{T(d,e)}(ad+b, q_{j}) = je + a$.
	
	We note that $T(d,e) $ is strongly synchronizing at level $1$. 
	
	Fix a state $q_{j}$ for $0 \le j \le d-1$. First note that the image of $q_{j}$ is precisely the union $\bigcup_{1 \le a \le s-1} U_{je +a}$. This follows by induction using the following observation: for any $0 \le a \le e-1$, $x = le + b$, $0 \le l \le d-1$, $0 \le b \le e-1$, $\pi_{T}(ad+l, q_j) = q_l$ and $\lambda_{T}(ad+l, q_j)= je + a$.
	
	We note also that all states of $T(d,e)$ are injective. For any state $q_j$ of $T(d,e)$,  $\lambda_{T(d,e)}(x, q_j) = \lambda_{T(d,e)}(y,q_j)$ for distinct $x, y \in \xn$ if and only if there are $0 \le a \le e-1$ and distinct $0 \le b, c \le d-1$ such that  $x = ad + b$ and $y = ad + c$. In this case we therefore have $\pi_{T(d,e)}(x, q_j) = q_b$ and $\pi_{T(d,e)}(y, q_j) = q_{c}$. We then note that $\im(q_b) \cap \im(q_c) = \emptyset$ by the observation in the preceding paragraph.
	
	Let $x,y \in \xn$. Write $x = je +a$ and $y = je + b$. Then the state $q_j$ is the unique state for which there are $u,v \in \xn$ with $\lambda_{T(d,e)}(uv, q_j) = xy$. Thus, as $T(d,e)$ is synchronous, we see that the induced map $(T(d,e))\Pi$ on $\rwnl{\ast}$ is in fact a bijection.
	
	Lastly notice that since for $j \le l$, $\im(q_j) \lelex \im(q_{l})$, then each state of $T(d,e)$ preserves the lexicographic ordering and so induces a continuous injection of the interval. Thus, by Corollary~\ref{cor:TMnbijectionTOn}, we conclude that $T(d,e)$ is an orientation preserving element of $\TOn{n}$. Note, moreover, that, as observed above, $s(T(d,e)) = e$.
\end{example}

We now prove the main result of this section.

\begin{Theorem}
	The group $\TLn{n} = \TOn{n} \cap \Ln{n}$ is precisely the subgroup $$\gen{\T{R}, T(\sfrac{n}{p},p) : p \mathrm{ \ is \ a \ prime \ dividing \ }n}$$ generated by $\T{R}$ and the elements $T(\sfrac{n}{p}, p)$ for $p$ a prime divisor of $n$. Furthermore, writing $n = p_1^{l_1}p_2^{l_2} \ldots p_{r}^{l_r}$, where the $p_i$ are the distinct prime divisors of $n$, and setting $l = \gcd(l_1, \ldots, l_r)$, we have: 
	\[
		 \TLn{n} \cong C_2 \times \Z^{\pd{n}-1} \times C_{l}.
	\]
\end{Theorem}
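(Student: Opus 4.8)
The plan is to decompose $\TLn{n}$ using the two homomorphisms already available to us: the orientation sign map $\rsig$ (or rather its restriction detecting the element $\T{R}$) and the map $s: \Ln{n} \to \Gn{n}$ from Theorem~\ref{Thm:epitoGn}. First I would establish that $\T{R}$ generates a copy of $C_2$ that splits off as a direct factor. The key observation is that $\T{R}$ is orientation \emph{reversing} while every element $T(\sfrac{n}{p}, p)$ is orientation preserving (as verified in Example~\ref{Example:Tnuntodivisorsofn}), and that $s(\T{R}) = 1$ since $\T{R}$ is a single-state homeomorphism. Thus the orientation status gives a homomorphism $\TLn{n} \to C_2$ with kernel the orientation-preserving elements, call it $\TLn{n}^{+}$, and $\T{R}$ provides a section. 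The crux of the argument is therefore to show that $\TLn{n}^{+} \cong \Gn{n} \cong \Z^{\pd{n}-1} \times C_l$.

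The main work is to prove that $s$ restricts to an \emph{isomorphism} $\TLn{n}^{+} \to \Gn{n}$. For surjectivity, I would use the elements constructed in Example~\ref{Example:Tnuntodivisorsofn}: since $s(T(\sfrac{n}{p}, p)) = p$ for each prime $p \mid n$, and the images of the primes generate $\Gn{n}$ (by its very definition as $G_n/\!\!\sim$), the orientation-preserving subgroup surjects onto $\Gn{n}$. For injectivity, I would show that if $T \in \TLn{n}^{+}$ satisfies $s(T) = 1$ in $\Gn{n}$, then $T$ is the identity. Here $s(T) = 1$ in $\Gn{n}$ means, by the definition of the equivalence $\sim$, that the integer $s(T)$ (the maximal divisor-of-a-power-of-$n$ not divisible by $n$ attached to $T$ via Proposition~\ref{thm:conecoveringforastate}) equals $1$. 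At this point Lemma~\ref{lem:powerofnmeanssizeone} applies directly: $s(T) = 1$ forces $|T| = 1$, and since $T$ is orientation preserving with a single state it must be the identity transducer. This is the step I expect to carry the real content, though almost all of it has been front-loaded into Lemma~\ref{lem:powerofnmeanssizeone}.

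With these two facts in hand, the structural statement follows formally. The orientation homomorphism $\TLn{n} \to C_2$ splits via $\T{R}$, and its kernel is $\TLn{n}^{+} \cong \Gn{n}$; since conjugation by $\T{R}$ acts on the orientation-preserving part, I would check that this action is trivial on the $s$-invariant, so that the extension is in fact a direct product rather than merely a semidirect one. Concretely, $s(\T{R} T \T{R}) = s(\T{R})s(T)s(\T{R}) = s(T)$ because $s$ is a homomorphism into the abelian group $\Gn{n}$ and $s(\T{R}) = 1$; combined with injectivity of $s$ on $\TLn{n}^{+}$ this shows $\T{R}$ centralizes $\TLn{n}^{+}$, giving
\[
	\TLn{n} \cong C_2 \times \Gn{n} \cong C_2 \times \Z^{\pd{n}-1} \times C_l,
\]
where the last isomorphism is the explicit description of $\Gn{n}$ recorded before Theorem~\ref{Thm:epitoGn}. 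Finally, the generation statement is immediate from surjectivity together with the section: every element of $\TLn{n}$ is, up to the factor $\T{R}$, an orientation-preserving element whose $s$-image is a product of the $s(T(\sfrac{n}{p},p)) = p$, and injectivity of $s$ on the orientation-preserving part upgrades this to genuine equality of group elements, so $\TLn{n} = \gen{\T{R}, T(\sfrac{n}{p}, p) : p \mid n}$.

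The step I expect to be the genuine obstacle is verifying that $\T{R}$ truly centralizes the orientation-preserving subgroup rather than merely normalizing it; the argument above via injectivity of $s$ handles the $\Gn{n}$-invariant, but one must be careful that no two distinct orientation-preserving elements share the same $s$-value, which is exactly the injectivity claim and hence again leans on Lemma~\ref{lem:powerofnmeanssizeone} applied to the quotient $T U^{-1}$ of two elements with equal $s$-image.
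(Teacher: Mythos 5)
Your proposal is correct and follows essentially the same route as the paper: both arguments rest on Lemma~\ref{lem:powerofnmeanssizeone} as the injectivity/triviality step (applied to a product with $s$-value $1$), on Example~\ref{Example:Tnuntodivisorsofn} together with Proposition~\ref{thm:conecoveringforastate} for surjectivity onto $\Gn{n}$, and on the $s$-invariant to show $\T{R}$ commutes with the orientation-preserving generators. The paper phrases the argument generator-by-generator (explicitly cancelling an arbitrary orientation-preserving $T$ against a product of the $T(\sfrac{n}{p},p)$) rather than through the splitting of the orientation homomorphism, but the content is identical.
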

\begin{proof}
	First note that by Proposition~\ref{thm:conecoveringforastate} and Lemma~\ref{lem:powerofnmeanssizeone}, if $n$ is not prime, then for a prime divisor $p$ of $n$, $T(\sfrac{n}{p}, p)$ has infinite order; if $n = p^{l}$ for a prime number $l$, then $T(\sfrac{n}{p},p)$ has order $l$.
	
	Let $T \in \TLn{n}$ be arbitrary. If $T$ is not orientation preserving, then $ \T{R}T$ is. Thus suppose that $T$ is orientation preserving. Let $p_1 \ldots p_r$ be prime numbers and $l_1, \ldots, l_r \in \N_{1}$ be natural numbers such that $s(T)p_1^{l-1} \ldots p_r^{l_r} = n^{m}$ for some $m \in \N$. Then, by Proposition~\ref{thm:conecoveringforastate}, $$U:=T T(\sfrac{n}{p_1},p_1 )^{l_1}\ldots T(\sfrac{n}{p_r},p_r )^{l_r},$$ satisfies $s(U) = 1$. Thus Lemma~\ref{lem:powerofnmeanssizeone} implies that $U = \id$.
	
	Hence we conclude that
	 $$\gen{\T{R}, T(\sfrac{n}{p},p) : p \mathrm{ \ is \ a \ prime \ dividing \ }n} = \TLn{n}.$$
	 
	 Note that for $p $ a prime divisor of $n$, $s(\R T(\sfrac{n}{p}, p)\R) = s(T(\sfrac{n}{p}, p))$. This means, by the above arguments, that, $(\R T(\sfrac{n}{p},p) \R)^{-1} = T(\sfrac{n}{p},p)^{-1}$. Therefore $\R T(\sfrac{n}{p},p) \R = T(\sfrac{n}{p},p)$ and $T(\sfrac{n}{p},p) \R  = \R T(\sfrac{n}{p},p)$.
	 
	 If $n = p^{l}$ for $p$ a prime,  then it is clear, from above, that the orientation preserving subgroup of $\TLn{n}$ is equal to $\gen{T(\sfrac{n}{p}, p)} \cong C_{l}$. Thus, we see that when $n = p^{l}$, $\TLn{n} \cong C_2 \times C_{l}$.
	 
	 Now suppose that $n = p_1^{l_1} \ldots p_{r}^{l_r}$ for $r >1$ and $p_i$ the distinct prime divisors of $n$. Let $u,v$ be distinct prime divisors of $n$. Then,  as before, we note that $s( T(\sfrac{n}{u},u) T(\sfrac{n}{v},v)T(\sfrac{n}{u},u)^{-1} ) = s(T(\sfrac{n}{v},v))$ since $s(T(\sfrac{n}{u},u))s(T(\sfrac{n}{u},u)^{-1})$ is a power of $n$. In particular, we again conclude, by Lemma~\ref{lem:powerofnmeanssizeone}, that $(T(\sfrac{n}{u},u) T(\sfrac{n}{v},v)T(\sfrac{n}{u},u)^{-1})^{-1} = T(\sfrac{n}{v},v)^{-1}$ and so $T(\sfrac{n}{u},u) T(\sfrac{n}{v},v) = T(\sfrac{n}{v},v)T(\sfrac{n}{u},u)$. Thus the generators of $\TLn{n}$ commute.
	 
	 We therefore see that the restriction of the map $s: \Ln{n} \to \Gn{n}$ to the subgroup $\TLn{n}$ yields an isomorphism. 
\end{proof}

We note that Example~\ref{Example:Tnuntodivisorsofn} means that the map $\rsig: \TOn{n} \to \Un{n}$ is unto the subgroup $\Un{n-1,n}$ generated by the divisors of $n$. However, it remains open whether $\Un{n-1,n}$ is precisely the image of the map $\rsig$. 

\section{An embedding Thompson's group \texorpdfstring{$F$}{Lg} into \texorpdfstring{$\mathcal{O}_{2}$}{Lg}}\label{Section:embeddingF}
The article \cite{MBrinFGuzman} shows that Thompson group $F$ embeds in the group of outer automorphisms of generalisations $F_{n}$ of $F$,  and the $T_{n,n-1 }$, $n>2$. The article \cite{OlukoyaAutTnr} extends this result to show that $F$ embeds into the groups $\TOnr$ and  $\Onr$ of outer automorphisms of $\Tnr$ and $G_{n,r}$ respectively whenever $n \ge 3$.  It is impossible that $F$ embeds into $\TOms{2}{1}$ the outer automorphism group of $T$ since $\TOms{2}{1}$ is finite. However, the group  $\Oms{2}{1}$ is infinite and so might possibly contain an isomorphic copy of $F$. In  this section we show that this is in fact the case. The embedding we give arises from the marker constructions which have been used to great effect in the literature surrounding the automorphisms of the shift dynamical system (\cite{Hedlund,BoyleKrieger, BoyleLindRudolph88, KimRoush, VilleS18}). In particular we show that for any $n \ge 3$, $\mathcal{O}_{2}$ contains an isomorphic copy of a subgroup  $\On^{x} \le  \Oms{n}{1}$ which contains a copy of $F$ .

Throughout we fix $n \ge 3$ and $1 \le x \le n-1$. We consider the subset  $\On^x$ of $\On$ consisting of all elements  $T$ which satisfy the following conditions:
\begin{enumerate}[label=X.\arabic*]
	\item there is a  (necessarily) unique state $q_x \in Q_{T}$ such that $\pi_{T}(x, q) = q_x$ for all $q \in Q_{T}$; \label{Onx:condloop}
	\item for any state $q \in Q_{T}$, $\lambda_{T}(x,q) = wx$ where $w \in \xns$ does not contain $x$; \label{Onx:condxread}
	\item for any word $u \in \xns$ and any state $q \in Q_{T}$ such that $w=\lambda_{T}(u,q)$ contains $x$, then there is a minimal prefix $u_1$ of $u$ such that $u_1$ does not contain $x$, $\lambda_{T}(u_1,q)$ does not contain $x$ and $x$ is maximal suffix of  $\lambda_{T}(u_1x, q)$ in $\{x\}^{\ast}$. \label{Onx:condcontainsx} 
\end{enumerate}

\begin{lemma}
	The subset $\On^{x}$ is a subgroup of $\Ons{1}$
\end{lemma}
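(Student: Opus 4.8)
The plan is to show that $\On^{x}$ contains the identity and is closed under the product and inversion of $\On$, and that every element of $\On^{x}$ in fact lies in $\Ons{1}$; since $\Ons{1}$ is a group and the operations agree, this exhibits $\On^{x}$ as a subgroup of $\Ons{1}$. The single most useful reformulation is the following consequence of condition \ref{Onx:condcontainsx}: for $T \in \On^{x}$, a state $q$, and a word $w$ containing no $x$, the output $\lambda_{T}(w,q)$ contains no $x$ (otherwise \ref{Onx:condcontainsx} would locate an $x$ in a prefix of $w$). Together with \ref{Onx:condloop} and \ref{Onx:condxread} this says that $T$ \emph{preserves the marker structure}: reading the input block up to and including its first $x$ produces exactly the output block up to and including its first $x$, each input $x$ yields exactly one output $x$, and after every $x$ the computation resets to the single state $q_x$. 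I would record once that conditions \ref{Onx:condloop}--\ref{Onx:condcontainsx} are invariant under $\omega$-equivalence, being properties of the induced homeomorphism and its local actions (condition \ref{Onx:condloop} says the local action at a word ending in $x$ is independent of what precedes the $x$, and \ref{Onx:condxread}, \ref{Onx:condcontainsx} are statements about longest common prefixes of images); hence they survive the passage to cores and minimal representatives involved in the product of $\On$.

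The identity transducer plainly satisfies \ref{Onx:condloop}--\ref{Onx:condcontainsx}. For closure under products, take $T,U \in \On^{x}$ and work in $T*U$. Reading $x$ from a state $(t,s)$ gives $\pi_{T*U}(x,(t,s)) = (\pi_T(x,t),\pi_U(\lambda_T(x,t),s))$; by \ref{Onx:condloop} and \ref{Onx:condxread} for $T$ this equals $(q_x^{T},\pi_U(wx,s))$ with $w$ free of $x$, and \ref{Onx:condloop} for $U$ forces the second coordinate to be $q_x^{U}$. Thus reading $x$ lands in the single state $(q_x^{T},q_x^{U})$ from everywhere, giving \ref{Onx:condloop}. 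For \ref{Onx:condxread}, $\lambda_{T*U}(x,(t,s)) = \lambda_U(\lambda_T(x,t),s) = \lambda_U(wx,s)$; since $w$ has no $x$ the marker-consequence above makes $\lambda_U(w,s)$ free of $x$, and \ref{Onx:condxread} for $U$ contributes a final block $w'x$ with $w'$ free of $x$, so the whole output has the form (no $x$)$\,x$. Condition \ref{Onx:condcontainsx} for $T*U$ follows by the same bookkeeping applied to the first $x$ of an arbitrary input word: the first input $x$ produces, after $T$ and then $U$, the first output $x$, and nothing before it. Being $\omega$-invariant, these properties pass to $TU$.

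The delicate point, and the one I expect to be the main obstacle, is closure under inversion. Here I would run Construction~\ref{construction:inverse}, whose states are pairs $(v,q)$ with $U_v \subseteq \im(q)$ and $(v)L_q = \ew$, and translate the \emph{output-side} marker structure of $T$ into the \emph{input-side} conditions \ref{Onx:condloop}--\ref{Onx:condcontainsx} for $T^{-1}$. The conceptual reason this must work is that \ref{Onx:condxread} and \ref{Onx:condcontainsx} make the correspondence between input $x$'s and output $x$'s of $T$ an order-preserving bijection, so it is symmetric in input and output; hence $h_{T}^{-1}$ preserves the same marker structure, and reading the marker $x$ in $T^{-1}$ should reset to the state mirroring $q_x$. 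Concretely I would show that whenever $T^{-1}$ reads an $x$ it accumulates exactly one $T$-input block of the form (no $x$)$\,x$ and returns to a distinguished state corresponding to $(\ew, q_x)$, verifying \ref{Onx:condloop} and \ref{Onx:condxread}; condition \ref{Onx:condcontainsx} then comes from the fact that $T$ emits no output $x$ until it has read an input $x$. The bookkeeping through $L_q$, $\pi_{T'}$, and $\lambda_{T'}$ is where the care is needed, since one must check the relevant pairs $(v,q)$ are genuinely states of $T'$ and that minimisation identifies the reset states; phrasing the conditions as $\omega$-invariant properties of $h_{T^{-1}} = h_{T}^{-1}$ is what makes this tractable.

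Finally, to place $\On^{x}$ inside $\Ons{1}$ rather than merely $\On$, I would compute the signature: the marker conditions force $T$ to be compatible with the single-cone ($r=1$) structure, since after every $x$ the state resets to $q_x$ and, between markers, $T$ only rearranges finite $x$-free blocks without altering the ambient cone count. Consequently $\rsig(T) = 1$ (equivalently, the cone-count of each state image is $\equiv 1 \pmod{n-1}$), which is exactly the condition characterising membership in $\Ons{1}$ among elements of $\On$. Combining the four steps shows that $\On^{x}$ is a subgroup of $\Ons{1}$.
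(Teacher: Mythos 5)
Your overall architecture (identity, closure under product by direct computation, closure under inversion via Construction~\ref{construction:inverse} using the input/output symmetry of the marker structure) matches the paper's. But there is a genuine gap at the step that actually puts $\On^{x}$ inside $\Ons{1}$ rather than merely inside $\On$. You propose to do this by ``computing the signature'' and asserting that $\rsig(T)=1$ characterises membership in $\Ons{1}$. That characterisation is not established anywhere in the paper and is not the right invariant: $\rsig$ only records the number of cones in $\im(q)$ modulo $n-1$, so $\rsig(T)=1$ is a strictly weaker statement than what is needed, and the ``computation'' itself is not carried out. The paper's route is different and is the one you should take: using conditions \ref{Onx:condxread} and \ref{Onx:condcontainsx} one shows that $q_x$ is a \emph{homeomorphism state} --- if some $u$ not beginning with $x$ had $\lambda_{T}(u,q_x)$ starting with $x$, then \ref{Onx:condcontainsx} would produce a prefix $u_1$ with $\lambda_{T}(u_1,q_x)=\ew$, $\lambda_{T}(u_1x,q_x)=x$ and $\pi_{T}(u_1x,q_x)=q_x$, contradicting injectivity of $q_x$ --- and possessing a homeomorphism state is exactly what places an element of $\On$ in $\Ons{1}$.

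This missing fact is not only the correct bridge to $\Ons{1}$; it is also the engine of the inversion argument you defer as ``bookkeeping where the care is needed.'' Because $q_x$ is a homeomorphism state, for a state $(u,q)$ of $T'$ the word $L_{q}(ux)$ is nonempty and can be computed exactly: it equals $\mu_1 x$ with $\mu_1$ free of $x$, $\pi_{T}(\mu_1x,q)=q_x$ and $\lambda_{T}(\mu_1x,q)=ux$, which is precisely what verifies \ref{Onx:condloop} and \ref{Onx:condxread} for $T'$ (with reset state $(\ew,q_x)$) and shows that no state $(u,q)$ of $T'$ has $u$ containing $x$. Your appeal to the ``order-preserving bijection between input and output $x$'s'' gestures at this but does not supply the step that makes the $L_q$-bookkeeping terminate. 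The remainder of your proposal (the consequence of \ref{Onx:condcontainsx} that $x$-free inputs give $x$-free outputs, the product computation in $T*U$, and the observation that the conditions are properties of the induced local actions and hence survive minimisation) is consistent with the paper's treatment.
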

\begin{proof}
	First observe that for $T \in \On^{x}$, the state $q_{x}$ must satisfy, by conditions~\ref{Onx:condxread} and \ref{Onx:condcontainsx}, $\lambda_{T}(x, q_x) = x$. 
	
	Checking that $\On^{x}$ is closed under taking products follows from direct computation applying conditions~\ref{Onx:condloop} to \ref{Onx:condcontainsx}.
	
	It now remains to check that $\On^{x}$ is closed under taking inverses.
	
	Let $T'$ be the inverse of $T$ as in construction~\ref{construction:inverse}. Note that there is a minimal $i \in \N_{1}$ such that $U_{x^{i}} \subseteq \im(q_x)$. However, by condition~\ref{Onx:condcontainsx}, we must have that $L_{q_x}(x^{i}) = x^{i}$. For suppose there is a word $u \in \xns$ which does not begin with $x$, such that $\lambda_{T}(u, q_x)$ has a prefix $x$. Then, condition~\ref{Onx:condcontainsx} implies that there is a prefix $u_1$ of $u$ such that $\lambda_{T}(u_1, q_x) = \ew$, $\lambda_{T}(u_1x, q_x) = x$ and $\pi_{T}(u_1x, q_x) = q_x$. However, this now contradicts the injectivity of $q_x$. Therefore, it must that $q_x$ is a homeomorphism state.
	
	Thus, every element of $\On^{x}$ possesses a homeomorphism state and so $\On^{x} \subseteq \Ons{1}$.
	
	Note that for any word $u \in \xns$ and any state $q \in Q_{T}$ such that $U_{u} \subseteq \im(q)$, if $u$ contains $x$, then as $q_x$ is a homeomorphism state $L_{q}(u)$ cannot be empty as there must be a prefix $w$ of $L_{q}(u)$ such that $\pi_{T}(w, q) = q_x$ by condition~\ref{Onx:condcontainsx}. It therefore follows that  any pair $(u,q) \in Q_{T'}$ is such that $u$ does not contain $x$.	
	
	Let $(u,q)$ be any state of $T'$ and $v \in \xnp$ be such that $w=L_{q}(uv)$ contains $x$. Let $w_1$ be the minimal prefix of $w$ such that $w_1 x$ is a prefix of $w$. Let $v_1$ be minimal such that $L_{q}(uv_1) $ has $w_1 x$ as a prefix. Then we see that $\lambda_{T}(w_1x, q) = \nu x$ is a prefix of $uv_1$. Moreover, minimality of $w_1$ and the fact that $p_x$ is a homeomorphism state, implies that $\lambda_{T}(w_1, q)$ does not contain $x$. Therefore, by condition~\ref{Onx:condxread}, $\nu$ does not contain $x$. However, we now have  that $L_{q}(\nu x) = w_1 x$ and so $v_1 = w_1 x$. Thus $ v= v_1 v_2$ where $\lambda_{T'}(v_1, q)$, a prefix of $w_1$ does not contain $x$, and $x$ is the maximal suffix of $\lambda_{T'}(v_1x, q)$ in $\{x\}^{\ast}$.
	
   We note that since $q_x$ is a homeomorphism state, $L_{q}(ux)$ cannot be empty. Moreover, setting $\mu = L_{q}(ux)$, we must have $\pi_{T}(\mu, q) =  q_x$ and $\lambda_{T}(\mu, q) = ux$ as $q_x$ is a homeomorphism state. Condition~\ref{Onx:condcontainsx} now implies that $\mu = \mu_1 x$ where $\mu_1$ does not contain $x$ and $\lambda_{T}(\mu, q)$ does not contain $x$. Therefore we see that $\lambda_{T'}(x, (u,q)) = \mu_1 x$ and $\pi_{T'}(x, (u,q)) = (\ew, q_x)$.
\end{proof}

We show that for any $m \ge 2$, $\Oms{m}{1}$ contains a subgroup isomorphic to $\On^{x}$ using the marker construction.

We need some terminology.

\begin{Definition}
	Let $w_1, w_2 \in \xnp$, then $w_1$ and $w_2$ are said to \textit{overlap non-trivially}, if no proper  suffix of $w_1$ coincides with a proper prefix of $w_2$ and no proper suffix of $w_2$ coincides with a proper prefix of $w_1$. A word $w \in \xns$ is said to \textit{overlap itself trivially} if it does not overlap with itself non-trivially. A subset $w \subset \xns$ is said to have only \textit{trivial overlaps} if no two elements of $w$ overlap non-trivially.
\end{Definition}

The following lemma is essentially a result in combinatorics on words. A proof in a more general context can be found in \cite{BoyleLindRudolph88}.

\begin{lemma}\label{lem:markergeneration}
	Let $n \in \N_{2}$, then there is a collection $B = \cup_{i \in \N_{1}} B_{i} \subseteq \xns$ such that $B_i$ contains $i$ words of equal length and $B_i$ has only trivial overlaps.
\end{lemma}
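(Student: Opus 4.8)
The plan is to prove the lemma by an explicit construction carried out over the two-letter subalphabet $\{0,1\} \subseteq \xn$, which is available since $n \ge 2$; the same family then works for every $n$. First I would unwind the definition: to say that a set $B_i \subseteq \xns$ has only trivial overlaps means precisely that no nonempty proper suffix of a word in $B_i$ coincides with a nonempty proper prefix of a word in $B_i$. Taking the two words to be equal, this includes the requirement that each $w \in B_i$ is unbordered (overlaps itself only trivially), so a single argument will handle both the cross-overlaps and the self-overlaps. The guiding idea is to reserve one long run of a single letter as an unmistakable \emph{marker} that can occur inside a word of $B_i$ only as an initial segment.

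Concretely, for $i \in \N_{1}$ I would fix the common length $\ell = 2i+4$ and define, for $1 \le j \le i$,
\[
  w_j = 1^{\,i+1}\, 0\, 1^{\,j}\, 0\, 1^{\,i-j}\, 0.
\]
These $i$ words are pairwise distinct, since the first maximal run of $1$'s after the initial block has length $j$, and all have length $\ell$, so $B_i := \{w_1, \dots, w_i\}$ has the required cardinality and uniform length. The two structural features I would record are: each $w_j$ has exactly one maximal run of $1$'s of length $\ge i+1$, namely the initial block $1^{i+1}$ (every other maximal $1$-run has length at most $i$), and each $w_j$ ends in the letter $0$.

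To verify the overlap condition I would argue directly. Suppose $t$ is a nonempty proper suffix of some $w_j$ equal to a nonempty proper prefix of some $w_{j'}$, and split on $|t|$. If $|t| \le i$, then the length-$|t|$ prefix of $w_{j'}$ is $1^{|t|}$, forcing $t = 1^{|t|}$ to be a suffix of $w_j$; but $w_j$ ends in $0$, a contradiction. If $|t| \ge i+1$, then the length-$|t|$ prefix of $w_{j'}$ begins with $1^{i+1}$, so $t$ begins with $1^{i+1}$; since $t$ is a \emph{proper} suffix of $w_j$, it starts at a position $\ge 2$, exhibiting a run of $1$'s of length $\ge i+1$ not at the start of $w_j$, which contradicts the uniqueness of the initial block. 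Hence no such $t$ exists, which is exactly the assertion that $B_i$ has only trivial overlaps, and, specialising to $j = j'$, that each $w_j$ is unbordered.

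The main obstacle here is conceptual rather than computational: one must rule out \emph{all} suffix/prefix coincidences at once, including a word against itself, and the naive candidate families fail this — for instance the unbordered blocks $0^a 1^b$ chain together, since the suffix $0^c 1^{\ell-a}$ of one is a prefix of another, so they are not cross-bifix-free. The device that removes every such case uniformly is the single distinguished long run together with the fixed terminal letter, collapsing everything into the two contradictions above; a proof of this fact in a more general setting is given in \cite{BoyleLindRudolph88}.
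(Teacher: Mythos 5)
Your proof is correct, but it does not follow the paper's route for the simple reason that the paper has no route: Lemma~\ref{lem:markergeneration} is stated with the remark that it is ``essentially a result in combinatorics on words'' and a pointer to \cite{BoyleLindRudolph88}, where marker sets are produced in a more general setting (there the emphasis is on finding many such words of a given length inside an arbitrary mixing shift of finite type, which is far more than is needed here). Your explicit family $w_j = 1^{i+1}\,0\,1^{j}\,0\,1^{i-j}\,0$, $1 \le j \le i$, supplies a short self-contained argument over the subalphabet $\{0,1\}$, which suffices for every $n \ge 2$. The verification is airtight: each $w_j$ has length $2i+4$, ends in $0$, and its unique maximal run of $1$'s of length at least $i+1$ is the initial block, so a nonempty common suffix/prefix $t$ with $|t| \le i$ would force $w_j$ to end in $1$, while one with $|t| \ge i+1$ would exhibit $i+1$ consecutive $1$'s starting at a position at least $2$ and hence ending beyond position $i+1$, contradicting that uniqueness; and since all words of $B_i$ have a common length, ruling out proper suffix/prefix coincidences (including the case $j=j'$, i.e.\ borders) is exactly the ``only trivial overlaps'' condition as it is used in the marker construction later in the section. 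Two small points in your favour: you correctly read the paper's definition of ``overlap non-trivially'' with the intended (rather than the literally stated, inverted) quantifier, and your observation that naive unbordered families such as $0^a1^{\ell-a}$ fail the cross-overlap requirement is accurate and explains why the distinguished long run plus fixed terminal letter is the right device.
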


Let $T \in \On^{x}$ we construct an element $f_T \in \Oms{m}{1}$ as follows. We define $f_T$ as the core of a strongly synchronizng rational self-homeomorphism of $X_{m}^{\omega}$.

Let $B = \{b_0, b_1, \ldots, b_{n-1} \} \subset X_{m}^\ast$ consists of  words of equal length such that $B$ has only trivial overlaps. Let $\iota: B \to \xn$ be the map $b_{i} \mapsto i$. The map $\iota$ extends naturally to a monoid isomorphism $\iota: B \to \xns$ with inverse  denoted $\iota^{-1}$.

 We define a function $f_{T}: X_{m}^{\omega} \to X_{m}^{\omega}$ by first defining it on a dense subset of $X_{m}^{\omega}$ and then taking the unique continuous extension to all of $X_{m}^{\omega}$.
 
  Let $x = x_0x_1 \ldots \in X_{m}^\omega$ be such that $x$ does not have an infinite suffix in $B^{\ast}$. Let $\{ (s_i, t_i)\} \subset \N \times \N$ be defined as follows. Firstly, $s_1$  is minimal and $t_1$ maximal  such that $x_{s_1} \ldots x_{t_1} \in B^{+}$, $t_1 < s_2$ is minimal and $s_2\le  t_2$ maximal such that $x_{s_2} \ldots x_{t_2} \in B^{+}$; inductively, let $t_{i}< s_{i+1}$ be minimal and $s_{i+1} \le t_{i+1}$ maximal, if they exists, such that $x_{s_{i+1}} \ldots x_{t_{i+1}} \in B^{\ast}$. We note that the fact that $B$ has only trivial overlaps is means that an element of $B^{+}$ which occurs as a subword of $x$ can only occur as a subword of $x_{s_{i}} \ldots x_{t_i}$ for some $i  \in \N_{1}$.
  
   Set $w_{0} = x_0 \ldots x_{s_1 -1}$, if $s_{1} = 0$, then $w_0 = \ew$, and for $j \in \N_{1}$ set $w_{j} = x_{t_{j}+1} \ldots x_{s_{j+1}-1}$. For $i \in \N$, let $v_i = (x_{s_{i}}\ldots x_{t_i})\iota$. 
  
  Define $u_i$ as follows: if $v_i$ ends in $x$, then $u_i =  (\lambda_{T}(v_i, q_{x}))\iota^{-1}$; if $v_i$ does not end in $x$, then $\lambda_{T}(v_i x, q_x) = (u_i)\iota^{-1}x$, where we note that $b_{x}$ is not a suffix of  $u_i$.  Let $y \in X_{m}^{\omega}$ be defined as follows $y = w_0 u_1 w_1 u_2 w_2 u_2 \ldots$. Then set $(x)f_{T} :=y$.  We note that the the fact that $B$ only has trivial overlaps means that if $\{(s'_i, t_{i'})\}$ are defined for $y$ analogously as for $x$, then, $s'_1 = |w_0|$, $t'_1 = |u_1|$, and, inductively, $s'_{i+1} = t'_{i} + |w_{i}|$ and $t'_{i+1} = s'_{i+1} + |u_{i+1}|$ .
  
  We note that $f_{T}$ is continuous on the dense subset of $X_{m}^{\omega}$ consisting of all elements which do not contain a right infinite suffix in $B^{\ast}$. Thus $f_{T}$ can be extended uniquely to $X_{m}^{\omega}$.
 
Let $U$ be the inverse of $T$ in $\On^{x}$. We show that $f_{T} f_{U} = \id$. This follows from the following key observations. First observe that the state $p_x$ of $U$ satisfying $\pi_{U}(x, p_x) = p_x$ is such that $U_{p_x}T_{q_x} = \id$. Let $w \in \xnp$ be a word that does not end in $x$ and let $u \in \xnp$ be defined such that $\lambda_{T}(wx, q_x) = ux$. Notice that $u$ must necessarily not end in $x$ by Condition~\ref{Onx:condcontainsx}. Now, since $q_x$ and $p_x$  are homeomorphism states, and  $U_{p_x}T_{q_x} = \id$, we have $\lambda_{U}(ux, p_{x}) = wx$. In the case where $w \in \xnp$ is a word that ends in $x$, then, $\lambda_{U}(\lambda_{T}(w, q_x), p_x) = w$. The last sentence of the previous paragraph now implies that $f_{T}f_{U}  = \id$ on a dense subset of $X_{m}^{\omega}$. Continuity now implies that $f_{T}f_{U} = \id$.  Notice also that since $T$ is strongly synchronizing and minimal, $T_{q_x}$ is the identity map if and only if $T$ is the single-state identity transducer. Thus we see that $f_{T}$ is the identity map if and only if $T$ is.

To conclude that $\On^{x}$ embers in $\Oms{m}{1}$, it suffices to prove two things:
\begin{itemize}
	\item for $U, T \in \On^{x}$, $f_{T}f_{U} = f_{TU}$;
	\item $f_{T}$ is induced by a core and strongly synchronizing transducer.
\end{itemize}

The first follows straightforwardly since if $(qp)_{x}$, $q_x$ and $p_{x}$ are the unique states of $TU$, $T$ and $U$ such that, for $(D,d) \in \{ (TU, qp), (T,q), (U,p) \}$, $\pi_{D}(x, d_{x}) = d_x$, then $(TU)_{(qp)_{x}} = T_{q_x}U_{p_x}$. The result now follows since $f_{D}$ is uniquely determined by the action of the state $D_{x}$.

For the second point we begin with the following observations. Let $L$ be the size of the common length of elements of $B$. Let $T \in \On^{x}$ and let $k$ be the unique synchronizing level of $T$. Let $w$ be a word of length $kL$. Now one of the following alternatives holds:
\begin{enumerate}[label = (\alph*)]
	\item $w$ is an element of $B^{k}$, \label{Cond:normal}
	\item $w$ has a prefix of the form $uv_1 w_1 v_2u$ where $u$ is a suffix of an element of $B$, $v_1 v_2 \in B^{+}$ and $w_1$ has no element of $B$ as a prefix or suffix; \label{cond:abfront}
	\item $w$ has a suffix of the form $v_1 w_1 v_2 u$ where $u$  is a prefix of an element of $B$, $v_1, v_2 \in B^{+}$ and $w_1$ has no element of $B$ as a prefix or suffix.\label{cond:abback}
\end{enumerate}

We consider alternatives~\ref{cond:abfront} and \ref{cond:abback} first. Suppose that $w$ has a prefix of the form $uv_1w_1v_2$ as described in \ref{cond:abfront}. Let $l =uv_1w_1v_2$ and  let $x \in X_{m}^{\omega}$ be any element. Let $i \in \N$ be arbitrary such that $x_i \ldots x_{i+l-1} = uv_1w_1v_2$. We note that since $B$ has only trivial overlaps, the natural numbers $s_j \le t_j$ such that $x_{s_j} \ldots x_{t_j} \in B^{\ast}$ and $\{ s_{j}, s_{j}+1, \ldots, t_{j}\}$contains the index $i + |u| -1$ must satisfy, $t_{j} = i + |uv_1|-1$. Thus, the trivial overlap condition, forces that the numbers $s_{j+1} \le t_{j+1}$ must also satisfy $s_{j+1} = i+ |uv_1w_1|-1$ and $t_j \ge  i+l -1$. In particular the action of $f_{T}$ on the suffix $v_2x_{i+l}x_{i+l+1} \ldots$ is uniquely determined, by definition, by $q_{x}$. Thus we see that the word $uv_1w_1v_2$ forces a unique local action of $f_{T}$. Thus the word $w$ forces a unique local action of $f_{T}$.

  A very similar arguments  shows that if $w$ has a suffix $v_1 w_1 v_2 u$ as described in alternative \ref{cond:abback}, then $w$ again prescribes a unique local action of $f_{T}$ determined by the suffix $v_1 w_1 v_2 u$.

 Now suppose that  $w$ is an element of $B^{k}$. Let  $v$ be the element of $\xn^{k}$ given by $(w)\iota$ and let $q_{v}$ be the state of $T$ forced by  $v$. Let $x \in X_{m}^{\omega}$ be arbitrary. Let $i \in \N$ be arbitrary such that $x_{i}\ldots x_{i+kL -1} =w$. Let $s_j < t_j$ be the natural numbers, such that  $x_{s_j} \ldots x_{t_j} \in B^{+}$ and $\{ s_j, s_{j}+1, \ldots, t_j \}$ contains $i$. Then, by the trivial overlap condition for $B$, $t_{j} \ge i+kL-1$ and $i = s_j + ak$ for some $a \in \N$. In particular, the action of $f_{T}$ on the suffix $x_{i+kL}x_{i+kL+1} \ldots$ is uniquely determined $q_{v}$. Thus we see that the word $w$ determines a unique local action of $f_{T}$.
 
 Thus we see that $f_{T}$ is strongly synchronizing at level  at most $kL$. To see that $f_{T}$ is core, we observe that the local action induced by the word $b_{x}$ is precisely the action of $f_{T}$.
 
 We have thus proved the following result.
 
 \begin{Theorem}
 	Let $m, n \in \N_{2}$ then the group $\Oms{m}{1}$ contains an isomorphic copy of the group $\On^{x}$ for any $x \in \xn$.
 \end{Theorem}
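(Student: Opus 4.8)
The plan is to exhibit an explicit injective homomorphism $\Phi \colon \On^{x} \to \Oms{m}{1}$ given by $T \mapsto f_{T}$, where $f_{T}$ is the marker-substitution map constructed above, and to verify that it is both a homomorphism and injective and that its image genuinely lies in $\Oms{m}{1}$. First I would fix, via Lemma~\ref{lem:markergeneration}, a marker set $B = \{b_0, \ldots, b_{n-1}\} \subseteq X_{m}^{\ast}$ consisting of $n$ words of a common length $L$ having only trivial overlaps, together with the induced identification $\iota \colon B^{\ast} \to \xns$. This is the only place where the combinatorics of $B$ and the hypothesis $m \ge 2$ enter, and it is precisely what makes the decomposition of an element of $X_{m}^{\omega}$ into maximal $B^{+}$-blocks canonical, so that $f_{T}$ is well defined on the dense set of sequences with no infinite suffix in $B^{\ast}$ and extends continuously to all of $X_{m}^{\omega}$.

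With $\Phi$ in hand, I would check the two algebraic properties. The homomorphism identity $f_{T} f_{U} = f_{TU}$ follows from the observation that $f_{D}$ depends only on the action of the marker-loop state $D_{d_{x}}$ (the state fixed by reading $x$), combined with the product identity $(TU)_{(qp)_{x}} = T_{q_{x}} U_{p_{x}}$. Injectivity reduces to showing that $f_{T} = \id$ forces $T = \id$: since $f_{T} f_{T^{-1}} = \id$ and $f_{T}$ is determined by $T_{q_{x}}$, a trivial $f_{T}$ forces $T_{q_{x}}$ to be the identity map of $X_{m}^{\omega}$, whence minimality and strong synchronization of $T$ force $T$ to be the single-state identity transducer. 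Invertibility of $f_{T}$ inside the group is then automatic from $f_{T} f_{T^{-1}} = \id = f_{T^{-1}} f_{T}$.

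The substantive point, and the step I expect to be the main obstacle, is showing that each $f_{T}$ actually belongs to $\Oms{m}{1}$, i.e.\ that it is realised by a \emph{minimal, strongly synchronizing, core} transducer inducing a self-homeomorphism of $X_{m}^{\omega}$. Here I would let $k$ be the synchronizing level of $T$ and argue that every window $w$ of length $kL$ forces a unique local action of $f_{T}$, splitting into the three cases already isolated: either $w \in B^{k}$, or $w$ straddles a block boundary near its front, or near its back. In each case the trivial-overlap property of $B$ pins down the boundaries of the maximal $B^{+}$-blocks met by $w$, after which synchronization of $T$ at level $k$ determines the continuation; hence $f_{T}$ is strongly synchronizing at level at most $kL$. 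Coreness then follows because the local action forced by the single marker $b_{x}$ already reproduces $f_{T}$. Assembling these facts shows $\Phi$ is an injective homomorphism into $\Oms{m}{1}$, and since the construction applies verbatim for each choice of distinguished letter $x \in \xn$, this establishes that $\Oms{m}{1}$ contains an isomorphic copy of $\On^{x}$ for every $x$.
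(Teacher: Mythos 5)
Your proposal is correct and follows essentially the same route as the paper: the marker set from Lemma~\ref{lem:markergeneration}, the definition of $f_{T}$ on the dense set of sequences with no infinite suffix in $B^{\ast}$, the homomorphism property via $(TU)_{(qp)_{x}} = T_{q_{x}}U_{p_{x}}$, injectivity via $f_{T}=\id$ forcing $T_{q_{x}}=\id$, and the three-case window analysis of length $kL$ for strong synchronization with coreness read off from the local action of $b_{x}$. The only cosmetic slip is that $T_{q_{x}}$ acts on $\xn^{\omega}$ rather than $X_{m}^{\omega}$.
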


We note that if $T \in \On^{x} \backslash \Ln{n}$, then it is not hard to verify that $f_{T}$ is an element of $\Oms{m}{1}\backslash \Ln{m}$. If however $T \in  \On^{x} \cap \Ln{n}$, then $f_{T} \in \Ln{m}$.

We now prove the main result of this section. To do this it suffices to show that $\On^{x}$ for  some $n \in \N_{2}$ and some $x \in \xn$ contains an isomorphic copy of Thompson's group $F$. 

Let $n \in \N_{3}$ and let $x$ represent an arbitrary element of $X_n \backslash\{0, n-1\}$. Consider the transducers below:

\begin{figure}[H]
	\centering
	\begin{tikzpicture}[shorten >= .5pt,on grid,auto] 
	\node[state] (q_0)   {$p$};
	\node[state, xshift=4cm] (q_1) {$q$}; 
	\node[state, xshift=2cm, yshift=-4.3cm] (q_2) {$s$};
	\node[state, xshift=2cm, yshift=-2cm] (q_3) {$t$};
	\path[->] 
	(q_0) edge[in=170, out=10] node {$0|0$}  (q_1)
	edge[in=105, out=75,loop] node[swap] {$x|x$} ()
	edge[in=195, out=185] node[swap] {$n-1|n-1$} (q_2) 
	(q_1) edge[in=350, out=190] node {$x|n-1x$}(q_0)
	edge[in=10, out=260] node[swap] {$0|\ew$}(q_3)
	edge[in=0, out=0] node{$n-1|(n-1)^2$}(q_2)
	(q_2) edge[in=200, out=180] node[swap] {$x|x$}(q_0)
	edge[in=270, out=240,loop] node[swap, yshift=0.1cm] {$0|0$}()
	edge[in=270, out=300,loop] node[yshift=0.1cm] {$n-1|n-1$}()
	(q_3) edge[in=275, out=170] node[swap] {$x|x$} (q_0)
	edge node[swap] {$n-1|n-10$} node {$0|0$} (q_2);
	\end{tikzpicture}
	\caption{An element  $B \in \T{TO}_{n,1}$. }
	\label{Figure:ThetransducerBinTOn1}
\end{figure}
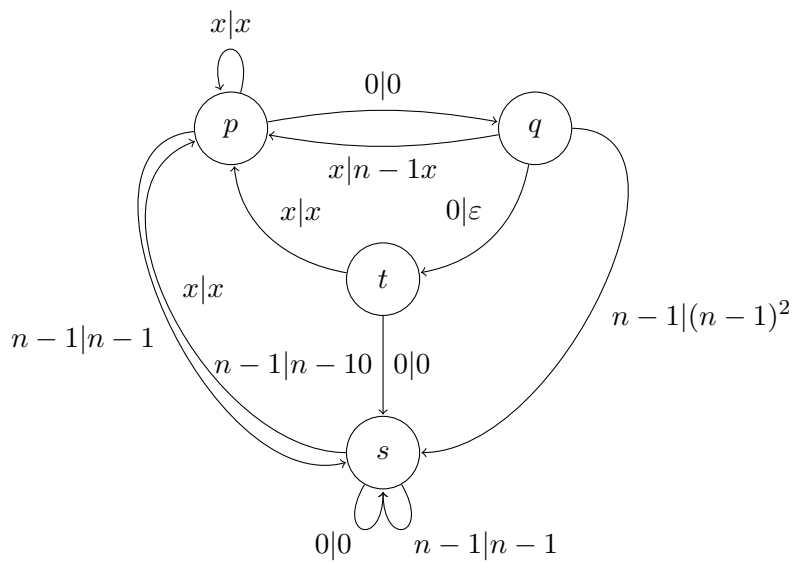

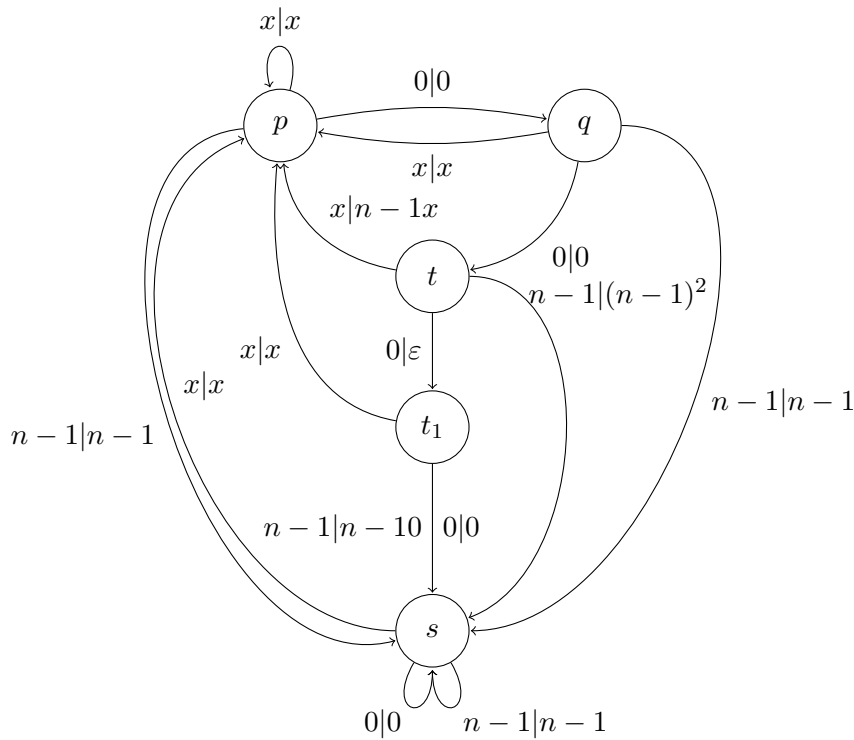
\begin{figure}[H]
	\centering
	\begin{tikzpicture}[shorten >= .5pt,on grid,auto] 
	\node[state] (q_0)   {$p$};
	\node[state, xshift=4cm] (q_1) {$q$}; 
	\node[state, xshift=2cm, yshift=-2cm] (q_2) {$t$}; 
	\node[state, xshift=2cm, yshift=-6.7cm] (q_3) {$s$};
	\node[state, xshift=2cm, yshift=-4cm] (q_4) {$t_1$};
	\path[->] 
	(q_0) edge[in=170, out=10] node {$0|0$}  (q_1)
	edge[in=105, out=75,loop] node[swap] {$x|x$} ()
	edge[in=195, out=185] node[swap] {$n-1|n-1$} (q_3) 
	(q_1) edge[in=350, out=190] node {$x|x$}(q_0)
	edge[in=10, out=260] node {$0|0$}(q_2)
	edge[in=0, out=0] node{$n-1|n-1$}(q_3)
	(q_2) edge[in=275, out=170] node[swap] {$x|n-1x$} (q_0)
	edge node[swap] {$0|\ew$}  (q_4)
	edge[in=20, out=0] node[xshift=2cm, yshift=1.8cm,swap] {$n-1|(n-1)^2$}  (q_3)
	(q_3) edge[in=200, out=180] node[swap] {$x|x$}(q_0)
	edge[in=270, out=240,loop] node[swap, yshift=0.1cm] {$0|0$}()
	edge[in=270, out=300,loop] node[yshift=0.1cm] {$n-1|n-1$}()
	(q_4) edge[in=265, out=170] node {$x|x$} (q_0)
	edge node[swap] {$n-1|n-10$} node {$0|0$} (q_3);
	\end{tikzpicture}
	\caption{An element  $C \in \T{TO}_{n,1}$. }
	\label{Figure:ThetransducerCinTOn1}
\end{figure}

We note that both $B$ and $C$ are elements of $\TOns{1} \cap \On^{x}$ as they satisfy conditions ~\ref{Onx:condloop} to \ref{Onx:condcontainsx}. Moreover, the paper \cite{OlukoyaAutTnr} shows that the restrictions $b, c$ of the elements $B_{p}, C_{p}$ respectively to the subspace  $\{0, n-1\}^{\omega}$ of $\xn^{\omega}$ gives an isomorphism from the subgroup $\gen{B, C}$ of  $\TOns{1} \cap \On^{x}$ to the group of homeomorphisms of $\{0,1\}^{\omega}$ generated by $b$ and $c$. It is not hard to verify, that the group $\gen{b, c}$ is in fact isomorphic to the copy of Thompson's group $F$ acting on the interval $[0, 1/2]$. 

The above fact, together with the containments $\Ons{1} \le  \Onr$ $1 \le r \le n-1$, yield the following corollary:

\begin{corollary}
	Let $n \in \N_{2}$, then $\Onr$ contains an isomorphic copy of Thompson's group $F$. 
\end{corollary}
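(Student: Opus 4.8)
The plan is to assemble the three ingredients already established in this section: the embedding theorem asserting that $\Oms{m}{1}$ contains a copy of $\On^{x}$ for all $m, n \ge 2$ and all $x \in X_n$; the chain of inclusions $\Ons{1} \le \Onr$ valid for every $1 \le r \le n-1$; and the explicit copy of Thompson's group $F$ sitting inside $\On^{x}$ provided by the transducers $B$ and $C$. Since all the substantive content has been discharged earlier, the remaining work is purely organisational: split into the regimes $n \ge 3$ and $n = 2$ and invoke the appropriate result in each.

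First I would treat $n \ge 3$. Here one can fix $x \in X_n \setminus \{0, n-1\}$, which is nonempty precisely because $n \ge 3$. The analysis surrounding Figures~\ref{Figure:ThetransducerBinTOn1} and~\ref{Figure:ThetransducerCinTOn1} shows that $B, C \in \TOns{1} \cap \On^{x}$ and that $\gen{B, C} \cong F$, via the restriction $b, c$ to $\{0, n-1\}^{\omega}$ generating the standard copy of $F$ acting on $[0, 1/2]$. Consequently $\On^{x}$, and hence $\Ons{1}$ (as $\On^{x} \le \Ons{1}$), contains an isomorphic copy of $F$. The inclusions $\Ons{1} \le \Onr$ for each $1 \le r \le n-1$ then transport this copy upward, so $\Onr$ contains a copy of $F$.

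Next I would dispose of $n = 2$, where $B$ and $C$ are unavailable because $X_2 \setminus \{0, 1\} = \emptyset$. Here I invoke the embedding theorem with $m = 2$: for any $n' \ge 3$ and any $x \in X_{n'} \setminus \{0, n'-1\}$, the group $\Oms{2}{1}$ contains an isomorphic copy of $\mathcal{O}_{n'}^{x}$. By the previous paragraph $\mathcal{O}_{n'}^{x}$ already contains a copy of $F$, so $\Oms{2}{1} = \mathcal{O}_{2,1}$ does as well; since $r = 1$ is the only admissible value when $n = 2$, this is exactly the claim $\Onr \supseteq F$ for $n = 2$.

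I expect no genuine obstacle remaining, as the analytic heart of the matter — that the marker construction yields a well-defined, core, strongly synchronizing map $f_T$, that $f_T f_U = f_{TU}$, and that $\gen{B, C} \cong F$ — has all been settled. The only point demanding a moment of care is the bookkeeping for $n = 2$: rather than attempting to build $F$ natively over the two-letter alphabet, one must first manufacture the copy of $F$ inside $\mathcal{O}_{n'}^{x}$ over an auxiliary alphabet $X_{n'}$ with $n' \ge 3$, and only then transport it down into $\mathcal{O}_2$ through the marker embedding.
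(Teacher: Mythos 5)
Your proposal is correct and follows essentially the same route as the paper: the case $n\ge 3$ is handled by the explicit transducers $B,C$ generating a copy of $F$ inside $\On^{x}\le\Ons{1}\le\Onr$, and the case $n=2$ is handled by transporting that copy into $\Oms{2}{1}$ via the marker-construction embedding theorem. The paper leaves the $n=2$ bookkeeping implicit, but your explicit case split is exactly the intended argument.
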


\def\cprime{$'$}
\providecommand{\bysame}{\leavevmode\hbox to3em{\hrulefill}\thinspace}
\providecommand{\MR}{\relax\ifhmode\unskip\space\fi MR }
\providecommand{\MRhref}[2]{%
  \href{http://www.ams.org/mathscinet-getitem?mr=#1}{#2}
}
\providecommand{\href}[2]{#2}

\end{document}